\documentclass{article}
\usepackage{graphicx} % Required for inserting images
\usepackage{amsthm}
\usepackage{amsfonts, amsmath, amssymb, amsthm}
\usepackage{hyperref}
\usepackage{cleveref}
\usepackage[utf8x]{inputenc}
\usepackage{url}

\newcommand{\BSig}{\mathsf{B}\Sigma^0}

\newcommand{\RCA}[0]{\mathsf{RCA}}

\newcommand{\ADS}[0]{\mathsf{ADS}}
\newcommand{\EM}[0]{\mathsf{EM}}

\newcommand{\RT}[0]{\mathsf{RT}}

\newcommand{\trRT}[0]{\mathsf{trRT}}

\newcommand{\fEM}[0]{\mathsf{fEM}}

\newcommand{\NN}[0]{\mathbb{N}}

\newcommand{\card}{\operatorname{card}}
\newcommand{\dom}{\operatorname{dom}}
\newcommand{\finsub}{\subseteq_{\mathtt{fin}}}

\def\qt#1{``#1''}%

\newcommand{\psn}{\mathrm{psn}}

\title{Partition theorems for Ketonen-Solovay largeness: Hardy-like version}
\date{\today}

\newtheorem*{statement}{Statement}
\newtheorem{theorem}{Theorem}
\numberwithin{theorem}{section}
\newtheorem{maintheorem}[theorem]{Main Theorem}
\newtheorem{lemma}[theorem]{Lemma}

\newtheorem{proposition}[theorem]{Proposition}
\newtheorem{remark}[theorem]{Remark}
\newtheorem{definition}[theorem]{Definition}
\newtheorem{corollary}[theorem]{Corollary}
\newtheorem{example}[theorem]{Example}

\makeatletter
\newtheorem*{rep@theorem}{\rep@title}
\newcommand{\newreptheorem}[2]{%
\newenvironment{rep#1}[1]{%
 \def\rep@title{#2 \ref{##1}}%
 \begin{rep@theorem}}%
 {\end{rep@theorem}}}
\makeatother

\newreptheorem{theorem}{Theorem}
\newreptheorem{maintheorem}{Main Theorem}

\usepackage{xcolor}

\usepackage{authblk}
\DeclareSymbolFont{bbold}{U}{bbold}{m}{n}
\DeclareMathSymbol{\bbomega}{\mathord}{bbold}{"7F}

\author{Quentin Le Houérou \and Ludovic Patey}

\begin{document}

\maketitle

\begin{abstract}
We develop the framework of $\alpha$-largeness introduced by Ketonen and Solovay, by proving a partition theorem\footnote{This is a preliminary version of the article, based on the Hardy-like hierarchy to prove \Cref{thm:epsilon0-pigeonhole}. An improved version based on a variant of the $\Rightarrow_x$ relation is in preparation.} for $\alpha$-large sets with $\alpha < \epsilon_0$ which generalizes theorems from Ketonen and Solovay and from Bigorajska and Kotlarski. We also prove that for every $\omega^{nk+3}$-large set~$X$ with $\min X \geq 18$, every coloring $f : [X]^2 \to k$ admits an $\omega^n$-large $f$-homogeneous subset. This bound is tight, up to an additive constant.
\end{abstract}

\section{Introduction}

The celebrated Paris-Harrington theorem states that Peano arithmetics does not prove the existence of $\omega$-large homogeneous sets for arbitrary instances of Ramsey's theorem. Here, a finite set~$X$ is $\omega$-large if $\card X > \min X$. Ketonen and Solovay~\cite{ketonen1981rapidly} defined a notion of $\alpha$-largeness for $\alpha < \epsilon_0$ to quantify the size of finite sets over weak theories, and related it to the Wainer hierarchy of fast-growing functions~\cite{wainer1970classification}. More precisely, any fundamental sequence system $(\{\alpha\}(n))_{n \in \NN}$ for the ordinals~$\alpha < \epsilon_0$ induces a notion of largeness defined as follows: a finite set of integers $\{ x_0 < \dots < x_{k-1} \}$ is \emph{$\alpha$-large} if, letting $\alpha_0 = \alpha$ and $\alpha_{s+1} = \{\alpha_s\}(x_s)$, the ordinal $\alpha_{k-1}$ equals~0.

Since then, variants of Ketonen and Solovay's notion of largeness have been extensively studied, both from a combinatorial~\cite{bigorajska2006some,bigorajska1999partition,kotlarski2007more,bigorajska2002combinatorics,smet2010partitioning} and a proof-theoretic perspective~\cite{ratajczyk1988combinatorial,kotlarski2019model,patey2018proof,kolodziejczyk2023ramsey,pelupessy2016largeness,towsner2024erdos}, to prove partial conservation theorems over subsystems of second-order arithmetic for $\forall \Sigma^0_2$-sentences and non-speedup theorems, where a $\forall \Sigma^0_n$-formula is of the form $\forall X \varphi(X)$ where $\varphi(X)$ is $\Sigma^0_n$. More recently, parameterized versions of largeness have been defined to prove partial conservation theorems over $\RCA_0 + \BSig_2$ for $\forall \Sigma^0_3$-sentences~\cite{houerou2023conservation,houerou2026conservationramsey}.

The framework of $\alpha$-largeness is however relatively scattered: Ketonen and Solovay~\cite{ketonen1981rapidly} first proved some general structural theorems about $\alpha$-largeness for $\alpha < \epsilon_0$. Then Bigorajska and Kotlarski~\cite{bigorajska2002combinatorics,kotlarski2019model,bigorajska2006some,bigorajska1999partition} published a series of articles on a variant of $\alpha$-largeness, say $\alpha$-largeness${}^*$, introduced by Ratajczyk~\cite{ratajczyk1988combinatorial} based on the Hardy hierarchy of fast-growing functions~\cite{hardy1904theorem}. They proved in particular a general pigeonhole theorem for $\alpha$-largeness${}^*$ with $\alpha < \epsilon_0$. A framework for proving lower bounds for $\alpha$-largeness${}^*$ was developed by multiple authors~\cite{bigorajska2006some,kotlarski2007more}. Both notions of largeness are related (in particular, any $\alpha$-large set is $\alpha$-large${}^*$ for $\alpha \geq \omega$), but the translation of structural theorems from one notion to the author yields sub-optimal statements. Independently, the study of partial conservation theorems over~$\mathsf{I}\Sigma_1$ motivated the development of the framework of $\alpha$-largeness for $\alpha < \omega^\omega$~\cite{patey2018proof,kolodziejczyk2023ramsey,pelupessy2016largeness,towsner2024erdos}. In particular, Ko{\l}odziejczyk and Yokoyama~\cite{kolo2020some} proved that for every $\omega^{144(n+1)}$-large set~$F$ and every 2-coloring $f : [F]^2 \to 2$, there exists an $\omega^n$-large $f$-homogeneous set. For this, they proved multiple structural theorems in the restricted setting of~$\alpha < \omega^\omega$.

In this article, we pursue the general study of Ketonen and Solovay's notion of largeness, by proving the following general partition theorem, where $\oplus$ denotes the natural (or Hessenberg) sum over ordinals, and a set~$F$ is \emph{at most $\alpha$-large} if $F \setminus \{\max F\}$ is $\alpha$-small.

\begin{maintheorem}\label[maintheorem]{thm:epsilon0-pigeonhole}
    Let $\beta, \gamma < \epsilon_0$ be two ordinals, and $B,C \subseteq \NN \setminus \{0\}$ be at most $\beta$-large and $\gamma$-large respectively. Then $B \cup C$ is at most $(\beta \oplus \gamma)$-large.
\end{maintheorem}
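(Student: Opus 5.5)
We will reformulate largeness through a Hardy-like function and prove the statement by induction on the ordinal $\beta \oplus \gamma$. For a finite set $X = \{x_0 < \dots < x_{k-1}\}$ and an ordinal $\alpha < \epsilon_0$, define the remainder $\alpha[X]$ by $\alpha[\emptyset] = \alpha$ and $\alpha[\{x\} \cup Y] = (\{\alpha\}(x))[Y]$ for $x < \min Y$, with $\{0\}(x) = 0$. Then $X$ is $\alpha$-large iff $\alpha[X] = 0$. Also, $X$ is at most $\alpha$-large iff $\alpha[X \setminus \{\max X\}] > 0$, or $X = \emptyset$. We will need the standard Ketonen--Solovay structural facts for the fundamental sequences below $\epsilon_0$, which we take as established earlier or verify directly from the Cantor normal form:
\begin{itemize}
\item[(i)] monotonicity in the argument: if $x \le y$ then $\{\alpha\}(x) \le \{\alpha\}(y)$;
\item[(ii)] the Bachmann property: if $\{\alpha\}(x) < \delta < \alpha$ then $\{\alpha\}(x) \le \{\delta\}(x)$ for the relevant $x$, equivalently $\delta \Rightarrow_x \{\alpha\}(x)$;
\item[(iii)] the compatibility of natural sum with fundamental sequences: $\{\beta \oplus \gamma\}(x)$ equals $\beta \oplus \{\gamma\}(x)$ or $\{\beta\}(x) \oplus \gamma$, according to which summand carries the last Cantor-normal-form term.
\end{itemize}

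The plan is to prove the stronger, step-by-step statement that lets the induction go through. For every finite $B, C \subseteq \NN \setminus \{0\}$, we claim
\[
(\beta \oplus \gamma)[B \cup C] \;\ge\; \beta[B] \oplus \gamma[C]
\]
in the sense of the ``$\Rightarrow$'' ordering, i.e.\ the left-hand side can reach the right-hand side through fundamental sequence steps with arguments at least $\min(B\cup C)$. The theorem then follows by applying the claim to $B' = B \setminus \{\max B\}$ and $C$, which handles the case where $\max(B\cup C)$ is in $B$; the symmetric case, where $\max(B \cup C)$ is in $C$, is treated the same way. Since $B$ is at most $\beta$-large, $\beta[B'] > 0$. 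We split on the position of the maximum: if the last element of $B \cup C$ comes from $C$, we instead use that $C$ minus its maximum still leaves $\gamma$ with a remainder $\ge 0$, and $\beta[B] > 0$. Either way the natural sum of the remainders is positive, so $(B \cup C)\setminus\{\max\}$ is $(\beta\oplus\gamma)$-small.

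The claim itself is proved by induction on $\card(B \cup C)$, peeling off $x = \min(B \cup C)$. Say $x \in C$; the case $x \in B$ is symmetric, and the case $x \in B \cap C$ needs both steps. Then $\gamma$ is replaced by $\{\gamma\}(x)$, while $\beta \oplus \gamma$ is replaced by $\{\beta\oplus\gamma\}(x)$. We must show $\{\beta \oplus \gamma\}(x) \Rightarrow_x \beta \oplus \{\gamma\}(x)$, after which induction together with monotonicity of $\Rightarrow$ under later steps (facts (i), (ii)) finishes.

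The main obstacle is exactly this comparison, because $\{\beta\oplus\gamma\}(x)$ descends along whichever summand has the smallest last exponent. This may be $\beta$ rather than $\gamma$, so we cannot simply get $\beta\oplus\{\gamma\}(x)$. Here the hypothesis $0 \notin B \cup C$ enters. My first attempt is to write $\gamma = \gamma' + \omega^{\delta}$ and $\beta = \beta' + \omega^{\eta}$, and then show $\{\beta\oplus\gamma\}(x) \Rightarrow_x \beta \oplus \{\gamma\}(x)$ using $x \ge 1$. The idea is that $\{\omega^{\eta}\}(x)$ with $x \ge 1$ is still at least $\omega^{\eta'}\cdot x \ge$ any term smaller than $\omega^\eta$ reachable by the $x$-step of $\omega^\delta$ when $\delta \ge \eta$. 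This reduces to a Bachmann-type lemma for single powers $\omega^\delta$, proved by induction on $\delta$. If the exact $\Rightarrow_x$ form fails, the fallback is to weaken the invariant to the Hardy-like statement $H_{\beta\oplus\gamma}$ dominating the composition of $H_\beta$ and $H_\gamma$ evaluated along the interleaving, which the footnote suggests is the route the authors take.
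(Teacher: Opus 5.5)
\textbf{There is a genuine gap.} Reducing to a one-step comparison is the right shape, but the comparison you propose to prove is false, and it fails exactly in the case you flag as the obstacle.

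Take $\beta = 1$, $\gamma = \omega$ and $x = \min(B\cup C) \in C$ (e.g.\ $B = \{2\}$, $C = \{1,3\}$). Then $\{\beta\oplus\gamma\}(x) = \{\omega+1\}(x) = \omega$, while $\beta \oplus \{\gamma\}(x) = x+1$. The $\Rightarrow_x$-chain from $\omega$ is $\omega, x, x-1, \dots, 0$, so $\omega \not\Rightarrow_x x+1$.

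Raising the subscript to the next element of $B\cup C$ does not repair this. Take $\beta = \omega+1$, $\gamma = \omega^2$, $x = 1 \in C$ with next element $2$ (e.g.\ $C = \{1,3,4,5\}$, $B = \{2,6\}$, both satisfying the hypotheses). The combined remainder is $\omega^2+\omega$, and the natural sum of the individual remainders is $(\omega+1)\oplus\omega = \omega\cdot 2+1$. But the $\Rightarrow_2$-chain $\omega^2+\omega, \omega^2+2, \omega^2+1, \omega^2, \omega\cdot 2, \dots$ jumps over $\omega\cdot 2+1$.

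The loose reading ``reachable with arbitrary arguments $\geq \min(B\cup C)$'' is no better. It collapses to ordinal $\geq$, which does not propagate through later steps: $\omega > 10$ but $\{\omega\}(1) < \{10\}(1)$.

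So no invariant of the form ``combined remainder $\Rightarrow$ natural sum of the individual remainders'' survives your induction. When $\beta$ carries the smaller last exponent, a valid comparison is only available after the combined evaluation has used up the part of $\beta$ below the last exponent of $\gamma$. By then the arguments exceed that part's pseudo-norm. Your single-power Bachmann-type sketch does not address this.

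The Hardy-hierarchy fallback you mention is the paper's route, but all of the work lies there and you supply none of it. Your description is also off. The paper uses a single hierarchy $h_\alpha$ generated by the successor function $h = h^{B\cup C}$ of the union, together with the fact that $A$ is at most $\alpha$-large iff $h^A_\alpha(\min A)\uparrow$. Composition $h_\beta \circ h_\alpha = h_{\beta+\alpha}$ (\Cref{lem:h-composition}) models the ordinary sum for $\alpha \ll \beta$, not an interleaving.

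The key result is the one-step inequality of \Cref{thm:pigeonhole-one-step-not-ll}: $h_{\beta\oplus\alpha}(x) \geq h_{\{\beta\}(x)\oplus\alpha}(h(x))$ for $\beta > 0$, $x > 0$. Its proof runs as follows.
\begin{itemize}
\item It is reduced to the case $\beta \gg \alpha$ via \Cref{lem:split-cantor-normal-form}.
\item In the hard cases one writes $h_{\beta+\alpha}(x) = h_\beta(c)$ with $c = h_\alpha(x) > \psn(\alpha)+x$ (\Cref{lem:psn-nb-step}).
\item Only then are ordinals compared, under $\Rightarrow_{h(c)}$, using \Cref{lem:standard-sequence-vs-psn-2} and \Cref{lem:basic-arrow}.
\item The comparison is transferred back to Hardy values with \Cref{lem:h-hierarchy}(1)--(3).
\end{itemize}
This delayed comparison is exactly what your ordinal invariant cannot express, because $h_\alpha$ already encodes all the future steps.

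With that inequality, the theorem is a short induction along $x_0 < x_1 < \cdots$ on the pairs $(\beta_i,\gamma_i)$ of individual remainders. It never tracks the combined remainder, uses that the stepped remainder is nonzero, and ends because $\max(B\cup C) \notin \dom h$. As written, your proposal identifies the one-step reduction but proves no valid version of it.
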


This theorem has several consequences, including a generalization of Ketonen and Solovay~\cite[Lemma 4.6]{ketonen1981rapidly} and pigeonhole generalizations of Bigorajska and Kotlarski~\cite[Theorem 12]{bigorajska1999partition} for $\alpha$-largeness. For this, we introduce a Hardy-like hierarchy of fast-growing functions, and adapt and generalize the techniques of Bigorajska and Kotlarski.

Our second contribution is a tight upper bound (up to additive constant) of the closure of $\alpha$-largeness under Ramsey's theorem for pairs, for $\alpha < \omega^\omega$. We say that a set~$F$ is \emph{$\RT^2_k$-$\alpha$-large} if for every $k$-coloring of the pairs $[F]^2$, there is an $\alpha$-large $f$-homogeneous subset. We prove the following theorem:

\begin{maintheorem}\label[maintheorem]{thm:rt22-tight-bound}
Let $n, k \geq 1$. If $X \finsub \NN$ is $\omega^{kn+3}$-large and $\min X \geq 17$, then it is $\RT^2_k$-$\omega^n$-large. 
\end{maintheorem}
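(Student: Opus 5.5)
The plan is the classical Erdős–Rado style reduction of $\RT^2_k$ to $\RT^1_k$ along a prehomogeneous sequence. In the first step I would turn the $k$-coloring of pairs into a coloring of points. Given $f : [X]^2 \to k$, I build a sequence $x_0 < x_1 < \dots$ in $X$ that is prehomogeneous for~$f$: for all $i < j < \ell$ we have $f(x_i, x_j) = f(x_i, x_\ell)$.

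The sequence is built greedily. Set $X_0 = X$. Let $x_i = \min X_i$, and let $X_{i+1}$ be the largest (in the largeness sense) of the $k$ classes $\{y \in X_i \setminus \{x_i\} : f(x_i,y) = c\}$ for $c < k$. To choose a class that is large enough I use \Cref{thm:epsilon0-pigeonhole} in its iterated form. If $X_i \setminus \{x_i\}$ is $\omega^{m}\cdot$-large at an appropriate level, then one of the $k$ classes is $\omega^{m-1}$-large up to the additive loss coming from the natural-sum bound. Concretely, if all $k$ classes were at most $\omega^{m-1}\cdot$-small, then their union would be at most $\omega^{m-1}\cdot k$-large.

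This gives the key counting step. Removing one point of the $\omega$-component of the exponent costs a factor, and $k$ colors cost a factor $k$ in the coefficient. Since $\omega^{m-1}\cdot k < \omega^{m}$, one level of exponent absorbs one color split.

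The harder part is that the number of points of the prehomogeneous sequence must itself be large. Otherwise the final application of $\RT^1_k$ fails. For that, I would arrange the induced coloring $g(x_i) = f(x_i, x_{i+1})$ so that its domain $\{x_0, x_1, \dots\}$ is $\omega^{kn+1}$-ish large. I would then apply \Cref{thm:epsilon0-pigeonhole} once more, via $\beta_1 \oplus \dots \oplus \beta_k$ with each $\beta_c$ smaller than $\omega^n$: if every color class of $g$ were $\omega^n$-small, the union would be at most $\omega^{n}\cdot k$-small, a contradiction. This yields an $\omega^n$-large $g$-homogeneous set, which is $f$-homogeneous by prehomogeneity.

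To get an $\omega^{kn}$-type size for the sequence rather than the weaker $\omega^n \cdot k$, I would refine the greedy choice. Rather than losing one exponent per point, I track a vector of "budgets" $(\alpha_0, \dots, \alpha_{k-1})$, one ordinal $\omega^{n}$-ish per color. I maintain the invariant that the remaining reservoir $X_i$ is $\bigoplus$-large relative to the product-like ordinal $\omega^{\alpha_0 \oplus \dots}$. Each time color $c$ is chosen for $x_i$, only the $c$-budget decreases along the fundamental sequence at $x_i$. This is the Hardy-like bookkeeping that the exponent $kn$ reflects, since the $k$ budgets of size $\omega^n$ multiply to $\omega^{kn}$ in the exponent sense.

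The main obstacle is making this invariant precise. Specifically, I must show that when $X_i$ is $\omega^{\delta}$-large with $\delta = \delta_0 \oplus \dots \oplus \delta_{k-1}$, some color class after $x_i$ is $\omega^{\delta'}$-large, where $\delta'$ lowers exactly one $\delta_c$ by passing to $\{\delta_c\}(x_i)$. I would derive this from \Cref{thm:epsilon0-pigeonhole} applied to the $k$ classes, by showing the natural sum of the corresponding "failure" ordinals stays below $\{\omega^\delta\}(x_i)$.

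The three extra exponents and the condition $\min X \geq 17$ handle the finite loss at the start of the argument. This covers removing $\min$ elements and the $+1$ from "at most" in \Cref{thm:epsilon0-pigeonhole}. It also covers the fact that fundamental sequences at small arguments such as $x_i < 17$ give too little room. I would check these constants last, by direct computation for the first few steps.
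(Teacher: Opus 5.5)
Your plan has a genuine gap, and it sits in the step you call the main obstacle. The greedy prehomogeneous (Erd\H{o}s--Rado) construction cannot give an exponent linear in $kn$, whatever bookkeeping you use.

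First, the arithmetic behind your invariant is off. With budgets $\delta_c=\omega^n$, the ordinal $\omega^{\delta_0\oplus\cdots\oplus\delta_{k-1}}$ is $\omega^{\omega^n\cdot k}$, a tower, not $\omega^{kn}$. The ordinal $\omega^{kn}$ is the natural \emph{product} $\omega^n\otimes\cdots\otimes\omega^n$, but product bookkeeping already fails for finite budgets. For $k=2$ the greedy step forces $N(a_0,a_1)\geq N(a_0-1,a_1)+N(a_0,a_1-1)$, whose solutions grow like $\binom{a_0+a_1}{a_0}$, and $a_0a_1$ violates it ($9<6+6$ at $(3,3)$).

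Even the exponential invariant is false as stated. For $k=2$ and $\delta_0=\delta_1=\omega$ we have $\{\omega^{\omega\cdot 2}\}(x)=\omega^{\omega+x}$, while both failure ordinals equal $\omega^{\omega+x}$. Their natural sum $\omega^{\omega+x}\cdot 2$ is too large for \Cref{thm:epsilon0-pigeonhole} to give a contradiction. Adding slack (e.g.\ $\omega^{\delta+1}$) repairs this, but only with a bound of order $\omega^{\omega^n\cdot k}$. Note also that the sequence only needs to be $\omega^n\cdot k$-large, after which \Cref{prop:pigeonhole-principle-enhanced}(1) finishes. So the exponent $kn$ is not measuring the size of that sequence.

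The obstruction is intrinsic, not a matter of constants. Let $f(x_i,\cdot)$ split each reservoir $X_i\setminus\{x_i\}$ into $k$ consecutive segments of nearly equal largeness. Starting from an $\omega^m$-large $X$, the run goes through about $m$ phases. In the $j$-th phase the reservoir is about $\omega^{m-j}\cdot a$-large, with $a\lesssim y_j$ divided by $k$ at each step, where $y_j$ is the first point of the phase. So the phase contributes only about $\log_k y_j$ points. Hence no $\omega$-large block of the sequence starts and ends in the same phase, and the sequence is only about $\omega\cdot m$-large. It is not even $\omega^2$-large once $\min X$ exceeds $m$ by a constant, so for $n\geq 2$ this route cannot reach $\omega^{kn+3}$.

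The missing idea is the paper's decomposition of $\RT^2_k$ into $\fEM_k$ and $\trRT^2_k$.
\begin{itemize}
\item One first extracts an $\omega^2\cdot 3$-sparse, $\omega^{nk}\cdot 5+1$-large subset (\Cref{lem:sparsity-simplified}). This is where the $+3$ and $\min X\geq 17$ are spent, and it makes all finite Ramsey numbers negligible.
\item The grouping lemma (\Cref{lem:new-stabilize-bounds-optimal}) then gives an $f$-fallow, hence transitive, $\omega^{kn}+1$-large subset at constant-coefficient cost (\Cref{prop:em-k-large}(3)).
\item Finally, the transitive coloring is reduced to an auxiliary coloring $\bar f$ with $kn$ colors, recording $f(x,y)$ and the largeness level of the best homogeneous chain from $x$ to $y$. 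The Ketonen--Solovay-type bound of \Cref{prop:ketonen-solovay-revisited} applies to $\bar f$, and the exponent $kn$ is the number of these colors.
\end{itemize}
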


Note that in the case $k = 2$, we obtain $\omega^{2n+3}$-largeness, improving over the $\omega^{144(n+1)}$-largeness upper bound of Ko{\l}odziejczyk and Yokoyama~\cite{kolo2020some}. This bound is tight, in the sense that Kotlarski et al.~\cite[Theorem 5.4]{kotlarski2007more} proved that if a set is $\RT^2_k$-$\omega^n$-large${}^*$, then it is $\omega^{kn}$-large${}^*$. Translating in the $\alpha$-largeness framework, this shows that every $\RT^2_k$-$\omega^n$-large is $\omega^{kn-1}$-large. 

The proof of \Cref{thm:rt22-tight-bound} goes through the computation of upper bounds of closure for two intermediate combinatorial theorems, namely, the restriction of Ramsey's theorem for pairs to transitive colorings ($\trRT^2_k$) and a generalization of the Erd\H{o}s-Moser theorem to $k$-colorings ($\fEM$) due to Towsner and Yokoyama~\cite{towsner2024erdos}. We prove in particular that $\trRT^2_k$ is solely responsible for the lower bound of $\RT^2_k$-$\omega^n$-largeness, in that every $\omega^{n+3}$-large set~$X$ with $\min X \geq 7$ is $\fEM$-$\omega^n$-large (\Cref{cor:em-n+3-large}). This later result improves the $\omega^{18n}$-largeness upper bound of Towsner and Yokoyama for $\fEM$-$\omega^n$-largeness.

The remainder of this article is divided into two parts, as follows: In \Cref{sec:largeness-epsilon0}, we define and study Ketonen and Solovay's notion of $\alpha$-largeness for $\alpha < \epsilon_0$ and prove \Cref{thm:epsilon0-pigeonhole} and its consequences. Then, in \Cref{sec:better-bounds-regular-largeness}, we restrict ourselves to $\alpha$-largeness for $\alpha < \epsilon_0$ and study the closure of $\alpha$-largeness under multiple combinatorial statements, including Ramsey's theorem for pairs. The section culminates with the proof of \Cref{thm:rt22-tight-bound}.

\section{Largeness below $\epsilon_0$}\label[section]{sec:largeness-epsilon0}

Every ordinal $\alpha < \epsilon_0$ admits a unique \emph{Cantor normal form} 
$$\alpha = \omega^{\alpha_s} \cdot a_s + \cdots + \omega^{\alpha_0} \cdot a_0$$
for some~$\alpha_0 < \alpha_1 < \cdots < \alpha_s < \alpha$ and $a_0, \dots, a_s \in \NN \setminus \{0\}$. Given two ordinals $\alpha, \beta < \epsilon_0$, we write $\beta \gg \alpha$ if either $\alpha$ or $\beta$ equals 0, or if the smallest exponent of~$\beta$ is at least the largest exponent of~$\alpha$ in their Cantor normal form.
If $\alpha \neq 0$, we shall also use its \emph{short Cantor normal form} $\alpha = \beta + \omega^\gamma$
with $\beta \gg \omega^\gamma$.

\begin{definition}
Given a non-zero ordinal $\alpha < \epsilon_0$ with short Cantor normal form $\alpha = \beta + \omega^\gamma$, and $x \in \NN$, let 
$$\{\alpha\}(x) = \begin{cases}
        \beta & \text{ if } \gamma = 0,\\
        \beta + \omega^{\gamma-1} \cdot x & \text{ if } \gamma \text{ is successor },\\
        \beta + \omega^{\{\gamma\}(x)} & \text{ if } \gamma \text{ is limit.}
    \end{cases}
$$
Finally, let $\{0\}(x) = 0$.
\end{definition}

The sequence $(\{\alpha\}(x))_{x \in \NN}$ is called the (canonical) \emph{fundamental sequence} of $\alpha$. When $\alpha$ is a limit ordinal, then it is a distinguished increasing sequence of ordinals that is cofinal in $\alpha$.

The $\gg$ relation admits two desirable properties. First, if $\beta \gg \alpha$, then the sum $\alpha + \beta$ coincides with the natural sum (or Hessenberg sum) $\alpha \oplus \beta$ corresponding to the component-wise sum of the multiplicative factors. Second, if $\beta \gg \alpha$ and $\alpha \neq 0$, then the operation $\{\beta + \alpha\}(x)$ can be reduced to a computation of $\{\alpha\}(x)$:

\begin{lemma}[Ketonen and Solovay~\cite{ketonen1981rapidly}]\label[lemma]{lem:mesh-evaluation}
Let $\alpha, \beta < \epsilon_0$ be such that $\beta \gg \alpha$ and $\alpha \neq 0$. Then $\{\beta + \alpha\}(x) = \beta + \{\alpha\}(x)$.
\end{lemma}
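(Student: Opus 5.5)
The plan is to unfold the definition of the fundamental sequence on the short Cantor normal form of $\beta + \alpha$. Since $\alpha \neq 0$, write its short Cantor normal form as $\alpha = \alpha' + \omega^\gamma$ with $\alpha' \gg \omega^\gamma$.

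\textbf{Step 1: the short normal form of $\beta+\alpha$.} I claim it is $(\beta + \alpha') + \omega^\gamma$, with $\beta + \alpha' \gg \omega^\gamma$.

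If $\beta = 0$ the statement is trivial, so assume $\beta \neq 0$. Since $\beta \gg \alpha$, the smallest exponent of $\beta$ is at least the largest exponent of $\alpha$. Hence concatenating the Cantor normal form of $\beta$ with that of $\alpha$ yields a non-increasing sequence of exponents. When the smallest exponent of $\beta$ equals the largest exponent of $\alpha$, the two adjacent terms merge by adding their coefficients. Either way, this concatenation is exactly the Cantor normal form of $\beta + \alpha$.

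Its last term is the last term $\omega^\gamma$ of $\alpha$, possibly with a merged coefficient. Stripping one copy of it leaves $\beta + \alpha'$, whose smallest exponent is at least $\gamma$. Therefore $\beta + \alpha' \gg \omega^\gamma$.

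The one subcase needing care is $\alpha' = 0$ and $\alpha = \omega^\gamma$, where $\gamma$ equals the smallest exponent of $\beta$. Then $\beta + \alpha'= \beta$, and $\beta \gg \omega^\gamma$ holds directly.

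\textbf{Step 2: case analysis on $\gamma$.} By uniqueness of the short normal form, the definition of $\{\cdot\}(x)$ applies to $\beta+\alpha$ with prefix $\beta+\alpha'$ and final exponent $\gamma$. The three cases all reduce by associativity of ordinal addition.

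\begin{itemize}
\item If $\gamma = 0$: $\{\beta+\alpha\}(x) = \beta + \alpha' = \beta + \{\alpha\}(x)$.
\item If $\gamma$ is a successor: $\{\beta+\alpha\}(x) = (\beta+\alpha') + \omega^{\gamma-1}\cdot x = \beta + (\alpha' + \omega^{\gamma-1}\cdot x) = \beta + \{\alpha\}(x)$.
\item If $\gamma$ is a limit: $\{\beta+\alpha\}(x) = (\beta+\alpha') + \omega^{\{\gamma\}(x)} = \beta + \{\alpha\}(x)$.
\end{itemize}

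\textbf{Main obstacle.} The only step requiring attention is Step 1, namely justifying that the short Cantor normal form of $\beta+\alpha$ has prefix exactly $\beta+\alpha'$. This amounts to a careful bookkeeping of the normal forms, including the coefficient-merging case. No induction on ordinals is needed, since the definition of $\{\cdot\}(x)$ only inspects the final term.
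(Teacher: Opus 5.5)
Your proof is correct. The paper gives no proof of this lemma and simply quotes it from Ketonen and Solovay, so there is nothing to compare against; your unfolding of the definition on the short normal form $(\beta+\alpha')+\omega^\gamma$ of $\beta+\alpha$ is the natural direct verification.

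Step 1 can be shortened. Associativity already gives $\beta+\alpha=(\beta+\alpha')+\omega^\gamma$, so by uniqueness of the short normal form you only need $\beta+\alpha'\gg\omega^\gamma$. This holds because every exponent of $\beta+\alpha'$ is an exponent of $\beta$ or of $\alpha'$, and all of these are at least $\gamma$. This avoids the coefficient-merging bookkeeping entirely.
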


In general, fundamental sequences are not compatible with ordinal inequality, in the sense that there exist some ordinals $\beta > \alpha$ and some~$x \in \NN$ such that $\{\beta\}(x) < \{\alpha\}(x)$. For instance, if $\beta = \omega$ and $\alpha = 10$, then $\{\omega\}(1) = 1 < \{10\}(1) = 9$. However, this is the case if $x$ is larger than every coefficient appearing in the Cantor normal form of~$\alpha$. This motivates the definition of the pseudo-norm:

\begin{definition}
The \emph{pseudo-norm} $\psn(\alpha)$ of~$\alpha$ of Cantor normal form $\omega^{\alpha_s} \cdot a_s + \cdots + \omega^{\alpha_0} \cdot a_0$ is defined as 
$$\psn(\alpha) = \max(\psn(\alpha_0), \dots, \psn(\alpha_s), a_0, \dots, a_s)$$
\end{definition}

The following lemma is an immediate consequence of Bigorajska and Kotlarski~\cite[Lemma 2.1]{bigorajska2006some}.

\begin{lemma}[Bigorajska and Kotlarski~\cite{bigorajska2006some}]\label[lemma]{lem:standard-sequence-vs-psn}
    Let $\alpha < \beta < \epsilon_0$ and $x \in \NN \setminus \{0\}$ such that $\psn(\alpha) < x$. Then $\{\beta\}(x) \geq \alpha$ (with equality if only if $\beta = \alpha + 1$).
\end{lemma}

% \begin{proof}
% Proceed by induction on $\beta$. Let $\beta < \epsilon_0$ and assume the property to hold for every $\beta' < \beta$. Let $\alpha < \beta$ and $a \in \NN$ such that $\psn(\alpha) < a$.

% Write $\beta = \omega^{\beta_n}b_n + \dots + \omega^{\beta_0}b_0$ its Cantor normal form. Since $\beta > \alpha$ we have $\beta \neq 0$, hence, $\{\beta\}(a) = \omega^{\beta_n}b_n + \dots + \omega^{\beta_0}(b_0 - 1) + \{\omega^{\beta_0}\}(a)$. 

% \textbf{Case 1: $\beta_0 = 0$.} In that case, $\beta$ is successor and we have $\{\beta\}(a) \geq \alpha$. \\

% If $\alpha < \omega^{\beta_n}b_n + \dots + \omega^{\beta_0}(b_0 - 1)$, then we are done, so assume otherwise. Since $\alpha < \beta$, write $\alpha = \omega^{\beta_n}b_n + \dots + \omega^{\beta_0}(b_0 - 1) + \omega^{\alpha_k}a_k + \dots + \omega^{\alpha_0}a_0$ the Cantor normal form of $\alpha$. \\

% \textbf{Case 2: $\beta_0 = \beta_0' + 1$ for some $\beta_0' < \epsilon_0$.} In this case, $\beta_0' \geq \alpha_k$ and $\{\omega^{\beta_0}\}(a) = \omega^{\beta_0'}a > \omega^{\alpha_k} a_k + \dots + \omega^{\alpha_0} a_0$ as $a > \psn(\alpha) \geq a_{k}$, thus $\{\beta\}(a) > \alpha$.

% \textbf{Case 3: $\beta_0$ limit.} In that case, we can apply the inductive hypothesis on $\alpha_k < \beta_0 < \epsilon_0$ as $\psn(\alpha_k) \leq \psn(\alpha) < a$, thus $\{\beta_0\}(a) > \alpha_k$ (no equality since $\beta_0$ is successor), and $\{\omega^{\beta_0}\}(a) = \omega^{\{\beta_0\}(a)} \geq \omega^{\alpha_k + 1} > \omega^{\alpha_k} a_k + \dots + \omega^{\alpha_0} a_0$, thus $\{\beta\}(a) > \alpha$.
% \end{proof}

The following easy lemma relates $\psn(\{\alpha\}(n))$ to $\psn(\alpha)$ for any~$n \in \NN$.

\begin{lemma}\label[lemma]{lem:one-step-psn}
	Let $\alpha < \epsilon_0$ and let $n \in \NN$. Then $\psn(\alpha) - 1 \leq \psn(\{\alpha\}(n)) \leq \max \{\psn(\alpha), n\}$
\end{lemma}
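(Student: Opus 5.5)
We argue by induction on $\alpha < \epsilon_0$, following the recursive definition of the fundamental sequence through the short Cantor normal form $\alpha = \beta + \omega^\gamma$ with $\beta \gg \omega^\gamma$. The case $\alpha = 0$ is trivial, since $\{0\}(n) = 0$ and $\psn(0)=0$ (with truncated subtraction $0 - 1 = 0$). For $\alpha \neq 0$, write the Cantor normal form $\alpha = \omega^{\alpha_s} a_s + \dots + \omega^{\alpha_0} a_0$, so that $\gamma = \alpha_0$ and $\beta$ is $\alpha$ with $a_0$ decreased by one. The key bookkeeping fact is that $\psn(\beta)$ is the maximum of the same data as $\psn(\alpha)$, except that $a_0$ is replaced by $a_0 - 1$, or that term is removed if $a_0 = 1$. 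Hence $\psn(\alpha) - 1 \leq \psn(\beta) \leq \psn(\alpha)$, and $\psn(\alpha) = \max(\psn(\beta), a_0, \psn(\gamma))$.

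We then split into the three cases of the definition.

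\emph{Case $\gamma = 0$.} Then $\{\alpha\}(n) = \beta$, and both inequalities follow from the bookkeeping fact.

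\emph{Case $\gamma = \delta + 1$.} Then $\{\alpha\}(n) = \beta + \omega^{\delta} \cdot n$. Since $\beta \gg \omega^\gamma$ and $\delta < \gamma$, this is already in Cantor normal form, possibly merging with the last term of $\beta$ if its exponent is $\delta$, which cannot happen since $\beta$'s exponents are $\geq \gamma > \delta$. If $n = 0$ the result is just $\beta$. So the pseudo-norm is $\max(\psn(\beta), \psn(\delta), n)$. For the upper bound, note that $\psn(\delta) \leq \psn(\gamma)$: when $\delta$ has last term $\omega^0 \cdot c$, the term $\omega^0(c+1)$ in $\gamma$ has a larger coefficient, and otherwise $\gamma$ has the extra term $\omega^0 \cdot 1$. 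For the lower bound, we use $\psn(\delta) \geq \psn(\gamma) - 1$, which is the bookkeeping fact applied to the successor $\gamma$. Combined with $\psn(\beta) \geq \psn(\alpha)-1$, this covers every component of $\psn(\alpha)$. The coefficient $a_0$ needs care: if $a_0 \geq 2$, then $\beta$ retains $a_0 - 1$; if $a_0 = 1$, then $a_0 - 1 = 0$ is trivially bounded.

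\emph{Case $\gamma$ limit.} Then $\{\alpha\}(n) = \beta + \omega^{\{\gamma\}(n)}$, and $\{\gamma\}(n) < \gamma$ keeps this in Cantor normal form, with coefficient $1$ on the new term. Its pseudo-norm is therefore $\max(\psn(\beta), 1, \psn(\{\gamma\}(n)))$. The induction hypothesis for $\gamma < \alpha$ gives
\[
\psn(\gamma) - 1 \leq \psn(\{\gamma\}(n)) \leq \max(\psn(\gamma), n).
\]
Both bounds then follow by taking maxima with the bookkeeping fact, as in the successor case.

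The main obstacle is purely notational: verifying that the new terms do not merge with $\beta$'s last term (guaranteed by $\beta \gg \omega^\gamma$ together with the new exponent being $< \gamma$), and handling the degenerate cases $n = 0$ and $a_0 = 1$, which are precisely why the lower bound has slack $-1$. The induction is legitimate because $\gamma \leq \alpha$, with $\gamma < \alpha$ unless $\alpha = \omega^\alpha$. That cannot occur below $\epsilon_0$, so $\gamma < \alpha$ in the limit case.
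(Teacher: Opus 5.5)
Your upper bound is correct for every $n$, and the three-case induction on the short Cantor normal form is an explicit version of the paper's one-line argument that $\{\alpha\}(n)$ only inserts $n$'s and decrements coefficients. The lower bound, however, has a genuine gap.

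Your ``bookkeeping fact'' $\psn(\alpha)-1\le\psn(\beta)$ is false when $a_0=1$. In that case the whole term $\omega^\gamma$ is deleted, so $\psn(\gamma)$ drops out of the maximum, not just the coefficient. For example, $\alpha=\omega^5$ gives $\beta=0$ and $\psn(\beta)=0<4$. The fact is only safe when $\gamma=0$ or $a_0\ge 2$, so your first case survives.

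This matters in the successor case with $n=0$. There the term $\omega^\delta\cdot 0$ vanishes, so $\psn(\{\alpha\}(n))=\psn(\beta)$, not $\max(\psn(\beta),\psn(\delta),n)$. The lower bound then genuinely fails: $\{\omega^2\}(0)=0$ while $\psn(\omega^2)-1=1$. The limit case fails the same way: $\{\omega^{\omega^3}\}(0)=1$ while $\psn(\omega^{\omega^3})-1=2$. So, contrary to your last paragraph, the slack $-1$ does not absorb the combination $n=0$, $a_0=1$. It only absorbs the decrements $a_0\mapsto a_0-1$ and $\gamma\mapsto\delta$.

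The lower bound has to be restricted to $n\ge 1$. That is all the paper uses: \Cref{lem:psn-nb-step} applies it at $x\ge 1$. The paper's one-line sketch tacitly needs this too, since ``adding'' a coefficient $0$ erases a term together with its exponent.

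For fixed $n\ge 1$, write $\alpha=\beta_0+\omega^\gamma\cdot a_0$ with every exponent of $\beta_0$ above $\gamma$. Then $\psn(\alpha)=\max(\psn(\beta_0),a_0,\psn(\gamma))$. The normal form of $\{\alpha\}(n)$ has three relevant parts:
\begin{itemize}
\item it keeps every term of $\beta_0$;
\item it keeps $\omega^\gamma\cdot(a_0-1)$ when $a_0\ge 2$;
\item it contains a new lowest term, with coefficient $n\ge 1$ or $1$, whose exponent ($\delta$, resp.\ $\{\gamma\}(n)$) has pseudo-norm at least $\psn(\gamma)-1$. This follows from your successor computation, resp.\ the induction hypothesis at the same $n$.
\end{itemize}
Hence $\psn(\{\alpha\}(n))\ge\max(\psn(\beta_0),a_0-1,\psn(\gamma)-1)\ge\psn(\alpha)-1$. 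This argument never appeals to $\psn(\beta)\ge\psn(\alpha)-1$.
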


\begin{proof}
The result is clear for $\alpha = 0$, then, if $\alpha \neq 0$, the operation $\{\alpha\}(n)$ can only add $n's$ and decrease coefficients by $1$ in the (recursive) Cantor normal form of $\alpha$.
%Proceed by induction on $\alpha$. The result holds for $\alpha = 0$. Let $\alpha > 0$ and suppose that the result holds for every $\alpha' < \alpha$.

%Write $\alpha = \omega^{\alpha_n}a_n + \dots + \omega^{\alpha_0}a_0$ its Cantor normal form. We have $\psn(\alpha) = max(\psn(\alpha_n), \dots, \psn(\alpha_0), a_n, \dots, a_0)$. There are two cases:

%\textbf{Case 1: $\alpha_0 = \alpha_0' + 1$ for some $\alpha_0 < \epsilon_0$.} In that case, $\{\alpha\}(a) = \omega^{\alpha_n}a_n + \dots + \omega^{\alpha_0}(a_0 - 1) + \omega^{\alpha_0'}a$. 

%There are four subcases depending on wether $a_0 = 1$ or not and wether $\psn(\alpha_0')$ is equal to $\psn(\alpha_0)$ or to $\psn(\alpha_0) - 1$. In every such case we have $\min \{\psn(\alpha) - 1, a\} \leq \max \{\psn(\alpha) - 1, a\} \leq \psn(\{\alpha\}(a)) \leq \max \{\psn(\alpha), a\}$

%\textbf{Case 2: $\alpha_0$ limit.} In that case, $\{\alpha\}(a) = \omega^{\alpha_n}a_n + \dots + \omega^{\alpha_0}(a_0 - 1) + \omega^{\{\alpha_0\}(a)}$. By the inductive hypothesis, we have $min \{\psn(\alpha_0) - 1, a\} \leq \psn(\{\alpha_0\}(a)) \leq \max \{\psn(\alpha_0), a\}$. Thus $\min \{\psn(\alpha) - 1,a\} \leq \psn(\{\alpha\}(a)) \leq \max \{\psn(\alpha), a\}$.
\end{proof}

The notion of fundamental sequence generalizes to finite sets as follows:

\begin{definition}[Ketonen and Solovay]
Given an ordinal~$\alpha < \epsilon_0$ and a finite set~$X = \{x_0 < \cdots < x_s \}$, let
$$
\{\alpha\}(X) = \{\cdots \{\{\alpha\}(x_0)\}(x_1) \cdots\}(x_s)
$$
A set~$X$ is \emph{$\alpha$-large} if $\{\alpha\}(X) = 0$, otherwise, it is \emph{$\alpha$-small}.
\end{definition}

Accordingly, a finite set~$X$ is \emph{exactly $\alpha$-large} if it is $\alpha$-large and $X \setminus \{\max X\}$ is $\alpha$-small. A finite set~$X$ is \emph{at most $\alpha$-large} if $X \setminus \{\max X\}$ is $\alpha$-small. By convention, we consider that $\emptyset$ is at most 0-large.

\begin{example}
Any finite set~$X \subseteq \NN$ is $\card X$-large. A finite set~$X$ is $\omega$-large if $\card X > \min X$.
A finite set~$X$ is $\omega^\beta \cdot n$-large iff $X$ contains $n$ $\omega^\beta$-large subsets $X_0 < \dots < X_{n-1} \subseteq X$, where $X < Y$ means $\forall x \in X \forall y \in Y x < y$.
\end{example}

Note that a set~$X$ might be $\alpha$-large, $\alpha+1$-small, but not at most $\alpha$-large. For instance, $\{3, 5, 6, 7, 8\}$ is $\omega$-large, not $\omega+1$-large, and not at most $\omega$-large.

\subsection{Hardy-like hierarchy}

Ratajczyk~\cite[Section 5]{ratajczyk1988combinatorial} defined a notion of largeness based on the Hardy hierarchy of functions~\cite{hardy1904theorem}. As noted by Ratajczyk, this notion is slightly different from Ketonen and Solovay's $\alpha$-largeness, and was extensively studied by Bigorajska and Kotlarski~\cite{bigorajska1999partition,bigorajska2006some,kotlarski2019model}. We now define a Hardy-like hierarchy whose induced notion of largeness corresponds exactly to Ketonen and Solovay's definition.

\begin{definition}
    Fix an increasing function $h : A \to \NN$ for some $A \subseteq \NN$ with $h(a) > a$ for every $a \in A$. We define inductively a sequence of partial functions $h_{\alpha}$ for every $\alpha < \epsilon_0$ as follows: for $x \in \NN$ and $\alpha > 0$, let $h_0(x) = x$ and let $h_{\alpha}(x) = h_{\{\alpha\}(x)}(h(x))$ if $h(x) \in \dom(h_{\{\alpha\}(x)})$ and $x \in \dom(h)$, otherwise let $h_{\alpha}(x)$ undefined.
    We write $h_{\alpha}(x) \downarrow$ is $x \in \dom h_{\alpha}$ and $h_{\alpha}(x) \uparrow$ otherwise.
\end{definition}

Note that for every~$\alpha < \epsilon_0$ and $x \in \dom h_\alpha$, there is some~$n \in \NN$ such that $h_\alpha(x) = h^{(n)}(x)$. However, this $n$ depends on~$x$: if $\alpha \geq \omega$, there is no $n$ such that $h_\alpha = h^{(n)}$.

\begin{remark}
In the definition of the original Hardy hierarchy~\cite{hardy1904theorem} based on~$h$, if $\alpha$ is limit, then $h_\alpha(x) = h_{\{\alpha\}(x)}(x)$, while in our Hardy-like hierarchy, we define $h_\alpha(x) = h_{\{\alpha\}(x)}(h(x))$. This variation reflects the main difference between the notion of largeness from Ratajczyk~\cite[Section 5]{ratajczyk1988combinatorial} and the one of Ketonen and Solovay~\cite{ketonen1981rapidly}. 
In the former case, a set~$A$ is $\alpha$-large for a limit ordinal~$\alpha$ if it is $\{\alpha\}(\min A)$-large, while in the latter case, a set $A$ is $\alpha$-large if $A \setminus \{\min A\}$ is $\{\alpha\}(\min A)$-large. Some further differences exist: in the finite case, a set $A$ is $n$-large in the sense of Ratajczyk if $\card A \geq n+1$ while in Ketonen and Solovay's sense, a set $A$ is $n$-large iff $\card A \geq n$. As a consequence, under both definitions, a set $A$ is $\omega$-large iff $\card A > \min A$.
\end{remark}

\begin{lemma}\label[lemma]{lem:psn-nb-step}
    If $h_{\alpha}(x) \downarrow$ for some $\alpha < \epsilon_0$ and $x \in \NN \setminus \{0\}$, then $h_{\alpha}(x) \geq \psn(\alpha) + x$. 
    
    Furthermore, the inequality is strict when $\alpha \geq \omega$.
\end{lemma}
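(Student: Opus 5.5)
The plan is a transfinite induction on $\alpha < \epsilon_0$, proving both the inequality and its strict version together. The induction unfolds one step of the definition $h_\alpha(x) = h_{\{\alpha\}(x)}(h(x))$. We then combine the induction hypothesis at $\{\alpha\}(x)$ with the lower bound of \Cref{lem:one-step-psn}. Throughout we use the convention $\psn(0) = 0$.

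\emph{Base case.} For $\alpha = 0$ we have $h_0(x) = x = \psn(0) + x$.

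\emph{Inductive step, non-strict inequality.} Let $\alpha > 0$ and suppose $h_\alpha(x)\downarrow$ with $x \geq 1$. By definition, $x \in \dom h$, and $h_\alpha(x) = h_{\{\alpha\}(x)}(h(x))$ with the right-hand side defined. Since $h(x) > x \geq 1$, we have $h(x) \in \NN \setminus \{0\}$, so the induction hypothesis applies to $\{\alpha\}(x) < \alpha$ at the point $h(x)$. Using the lower bound of \Cref{lem:one-step-psn}, $\psn(\{\alpha\}(x)) \geq \psn(\alpha) - 1$, we get
$$h_\alpha(x) \geq \psn(\{\alpha\}(x)) + h(x) \geq (\psn(\alpha) - 1) + (x+1) = \psn(\alpha) + x.$$

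\emph{Strict inequality for $\alpha \geq \omega$.} This is the only delicate step, and I would split on whether $\{\alpha\}(x) \geq \omega$.
\begin{itemize}
\item If $\{\alpha\}(x) \geq \omega$, the strict form of the induction hypothesis adds $1$ to the chain above, which gives $h_\alpha(x) > \psn(\alpha) + x$ directly.
\item If $\{\alpha\}(x) < \omega$ while $\alpha \geq \omega$, we pin down $\alpha$ from the short Cantor normal form $\alpha = \beta + \omega^\gamma$.
  \begin{itemize}
  \item We must have $\beta = 0$, since otherwise $\beta \geq \omega^\gamma \geq \omega$ and $\{\alpha\}(x) \geq \beta$ would be infinite.
  \item The exponent $\gamma$ cannot be limit. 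Indeed, for limit $\gamma$ and $x \geq 1$ one has $\{\gamma\}(x) \geq 1$, by a quick induction on the definition of fundamental sequences. Then $\omega^{\{\gamma\}(x)} \geq \omega$.
  \item If $\gamma$ is a successor, then $\{\alpha\}(x) = \omega^{\gamma-1} \cdot x$ is finite only when $\gamma = 1$.
  \end{itemize}
  Hence $\alpha = \omega$ and $\{\omega\}(x) = x$. Since $\psn(\omega) = 1$, the non-strict bound at $x$ and $h(x) \geq x+1$ give
  $$h_\omega(x) = h_x(h(x)) \geq x + h(x) \geq 2x + 1 > 1 + x = \psn(\omega) + x,$$
  using $x \geq 1$.
\end{itemize}

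The main obstacle is this last case analysis. There the loss of $1$ in \Cref{lem:one-step-psn} is not compensated by the induction hypothesis, so the small-ordinal boundary case $\alpha = \omega$ has to be isolated and handled by the direct count above.
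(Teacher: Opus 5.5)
Your proof is correct and follows the paper's argument: induct on $\alpha$, unfold one step, and apply the lower bound of \Cref{lem:one-step-psn}, handling $\alpha=\omega$ directly and letting strictness propagate for $\alpha>\omega$. Your case analysis showing $\{\alpha\}(x)\geq\omega$ whenever $\alpha>\omega$ and $x\geq 1$ spells out what the paper leaves as ``propagating strictness upward,'' with one small slip: in your first bullet, $\omega^\gamma\geq\omega$ fails for $\gamma=0$, but that case is immediate because then $\alpha=\beta+1$ with $\beta\geq\omega$, so $\{\alpha\}(x)=\beta\geq\omega$.
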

\begin{proof}
By induction on $\alpha$. For $\alpha = 0$, let $x \in \NN \setminus \{0\}$ be such that $h_0(x) \downarrow$, then $h_0(x) = x = \psn(0) + x$. Let $\alpha < \epsilon_0$ and assume the property to hold for every $\alpha' < \alpha$. Let $x \in \NN$ be such that $h_{\alpha}(x) \downarrow$, then $h_{\alpha}(x) = h_{\{\alpha\}(x)}(h(x))$. By the inductive hypothesis, we have $h_{\{\alpha\}(x)}(h(x)) \geq \psn(\{\alpha\}(x)) + h(x)$, but, as $h(x) \geq x + 1$ and as $\psn(\{\alpha\}(x)) \geq \psn(\alpha) - 1$ by \Cref{lem:one-step-psn}, we have $h_{\{\alpha\}(x)}(h(x)) \geq \psn(\alpha) + x$. This concludes our induction. \\

For strictness: when $\alpha = \omega$, we have $\{\alpha\}(x) = x$, so $\psn(\{\alpha\}(x)) = \psn(x) = x > \psn(\alpha) - 1 = 0$. Thus, the inequality $\psn(\{\alpha\}(x)) \geq \psn(\alpha) - 1$ used above is strict, and hence so is $h_\alpha(x) \geq \psn(\alpha)+x$. The remaining cases $\alpha > \omega$ follow by the same induction as above, propagating strictness upward.
\end{proof}

Fix an increasing function $h : A \to \NN$ for some~$A \subseteq \NN$, with $h(x) > x$ for every~$x \in A$. The Hardy-like iterations of~$h$ yield faster-growing functions on a restricted domain, that is, if $\alpha > 0$, then $\dom h_\alpha \subseteq \dom h$ and $h_\alpha$ dominates $h$ on $\dom h_\alpha$ (see \Cref{lem:h-hierarchy}(4)).

On the other hand, given two ordinals $\alpha \leq \beta < \epsilon_0$ and some~$x \in \dom h_\beta$, it is not true in general that $x \in \dom h_\alpha$, nor that if $x \in \dom h_\beta \cap \dom h_\alpha$, then $h_\beta(x) \geq h_\alpha(x)$. However, if $x$ is sufficiently large -- $x > \psn(\alpha)$ suffices --, then this is the case (see \Cref{lem:standard-sequence-vs-psn-2,lem:h-hierarchy}). However, the $x > \psn(\alpha)$ hypothesis is too strong for our purpose, so we will use a more general relation $\beta \Rightarrow_x \alpha$ which depends on both $\beta$ and $\alpha$, and such that if $\beta \Rightarrow_x \alpha$, then $\beta \geq \alpha$ and $h_\beta(x) \geq h_\alpha(x)$.

\begin{definition}[Ketonen and Solovay]
Given two ordinals $\alpha, \beta < \epsilon_0$ and $x \geq 0$,
let $\beta \Rightarrow_x \alpha$ if either $\beta = \alpha$, or $\{\beta\}(x) \Rightarrow_x \alpha$.
\end{definition}

Bigorajska and Kotlarski~\cite[Lemma 2(viii)]{bigorajska1999partition} proved that the definition is unchanged if one replaces $\{\beta\}(x)$ by $\{\beta\}(y)$ for some~$y \leq x$. As one expects from the previous discussion, the relation $\beta \Rightarrow_x \alpha$ coincides with the inequality $\beta \geq \alpha$ whenever $x > \psn(\alpha)$. The whole interest of this relation is therefore in the case $x \leq \psn(\alpha)$.

The following lemma strengthens Ketonen and Solovay~\cite[Proposition 2.8.1]{ketonen1981rapidly} and is an immediate consequence of Bigorajska and Kotlarski~\cite[Lemma 2.1]{bigorajska2006some}. It appears in this form in the posthumous book of Kotlarski~\cite[Lemma~1.4.3]{kotlarski2019model}.

%Intuitively, the lemma says that when $\psn(\alpha) < x$, then the relation $\beta \Rightarrow_x \alpha$ is nothing but the relation $\beta \geq \alpha$. The whole interest of the $\Rightarrow_x$ relation is therefore to apply \Cref{prop:h-hierarchy}(2) when $x \leq \psn(\alpha)$, which will be the case in the proof of \Cref{prop:pigeonhole-one-step}.

\begin{lemma}[Kotlarski~\cite{kotlarski2019model}]\label[lemma]{lem:standard-sequence-vs-psn-2}
Let $\alpha \leq \beta < \epsilon_0$ and $x \in \NN$ such that $\psn(\alpha) < x$, then $\beta \Rightarrow_x \alpha$.
\end{lemma}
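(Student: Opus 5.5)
We argue by well-founded induction on $\beta < \epsilon_0$, with $\alpha$ and $x$ arbitrary subject to $\alpha \leq \beta$ and $\psn(\alpha) < x$. The whole argument is a repeated application of \Cref{lem:standard-sequence-vs-psn}, and the definition of $\Rightarrow_x$ lets us descend one step at a time.

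If $\beta = \alpha$, then $\beta \Rightarrow_x \alpha$ holds by the first clause of the definition. Otherwise $\alpha < \beta$. If $x = 0$, this is impossible: $\psn(\alpha) \geq 0$ cannot be smaller than $0$. So $x \in \NN \setminus \{0\}$ and $\psn(\alpha) < x$, and \Cref{lem:standard-sequence-vs-psn} applies to give $\{\beta\}(x) \geq \alpha$.

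Since $\alpha < \beta$, we have $\beta \neq 0$, so $\{\beta\}(x) < \beta$. This is immediate from the definition of the fundamental sequence in each of the three cases of the short Cantor normal form $\beta = \beta' + \omega^\gamma$:
\begin{itemize}
\item if $\gamma = 0$, then $\{\beta\}(x) = \beta' < \beta$;
\item if $\gamma$ is a successor, then $\beta' + \omega^{\gamma-1}\cdot x < \beta' + \omega^\gamma$;
\item if $\gamma$ is a limit, then $\{\gamma\}(x) < \gamma$ by induction on $\gamma$, so $\beta' + \omega^{\{\gamma\}(x)} < \beta' + \omega^\gamma$.
\end{itemize}
We can therefore apply the induction hypothesis to $\beta'' := \{\beta\}(x)$, with the same $\alpha \leq \beta''$ and the same $x$, to obtain $\{\beta\}(x) \Rightarrow_x \alpha$. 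The second clause of the definition of $\Rightarrow_x$ then yields $\beta \Rightarrow_x \alpha$.

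There is no real obstacle here. The substantive content, namely that the fundamental sequence does not undershoot $\alpha$ once $x > \psn(\alpha)$, is exactly \Cref{lem:standard-sequence-vs-psn}, which we are allowed to assume. The only points needing care are the degenerate case $x = 0$, which is excluded by the hypothesis, and the descent $\{\beta\}(x) < \beta$, which is what makes the induction well-founded.
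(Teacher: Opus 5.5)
Your proof is correct and is the derivation the paper has in mind. The paper gives no separate argument: it only notes that the lemma is an immediate consequence of the same Bigorajska--Kotlarski result that underlies \Cref{lem:standard-sequence-vs-psn}. Iterating that one-step bound $\{\beta\}(x) \geq \alpha$ along the strictly descending sequence $\beta > \{\beta\}(x) > \cdots$, by induction on $\beta$, is exactly that consequence.
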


Ketonen and Solovay~\cite{ketonen1981rapidly} proved many properties about the relation $\Rightarrow_x$. The following lemma compiles a list of basic facts that shall be useful to prove our main theorem.

\begin{lemma}[Ketonen and Solovay~\cite{ketonen1981rapidly}]\label[lemma]{lem:basic-arrow}
Let $\alpha, \beta, \lambda < \epsilon_0$ and $x \geq 0$.
\begin{itemize}
    \item[(1)] If $\lambda \gg \alpha$ and $\alpha \Rightarrow_x \beta$, then $(\lambda + \alpha) \Rightarrow_x (\lambda + \beta)$;
    \item[(2)] If $k < \ell \in \NN$, then $\omega^\alpha \cdot \ell \Rightarrow_x \omega^\alpha \cdot k$;
    \item[(3)] If $\alpha \Rightarrow_x \beta$ and $\alpha > \beta+1$, then $\alpha \Rightarrow_{x+1} \beta+1$.
    \item[(4)] If $\alpha \Rightarrow_x \beta$ and $x \geq 1$, then $\omega^{\alpha} \Rightarrow_x \omega^{\beta}$;
    \item[(5)] If $x \geq 1$ and $k < \ell \in \NN$, then $\{\alpha\}(\ell) \Rightarrow_x \{\alpha\}(k)$.
\end{itemize}
\end{lemma}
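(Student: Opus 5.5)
I will prove the five items separately, each by induction, using only the definitions of $\{\cdot\}(x)$ and $\Rightarrow_x$, together with \Cref{lem:mesh-evaluation}. Two facts do most of the work. First, $\Rightarrow_x$ is transitive. This follows by induction on the length of the chain witnessing $\beta \Rightarrow_x \alpha$, since that chain is just the finite sequence $\beta, \{\beta\}(x), \{\{\beta\}(x)\}(x), \dots$. Second, every ordinal reaches $0$ along its fundamental chain. Both facts are immediate.

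\emph{Item (1).} I induct on the length of the chain $\alpha = \alpha_0, \alpha_1 = \{\alpha_0\}(x), \dots, \alpha_m = \beta$. Each $\alpha_i$ with $i<m$ is nonzero. Each $\alpha_i$ has Cantor normal form exponents at most those of $\alpha$, so $\lambda \gg \alpha_i$ still holds. Hence \Cref{lem:mesh-evaluation} gives $\{\lambda+\alpha_i\}(x) = \lambda + \alpha_{i+1}$, and the chain lifts.

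\emph{Item (2).} It suffices, by transitivity and induction on $\ell-k$, to show $\omega^\alpha(k+1) \Rightarrow_x \omega^\alpha k$. Write $\omega^\alpha(k+1) = \omega^\alpha k + \omega^\alpha$, where $\omega^\alpha k \gg \omega^\alpha$. The chain of $\omega^\alpha$ reaches $0$, so $\omega^\alpha \Rightarrow_x 0$, and item (1) gives the claim. The case $\alpha=0$ is direct.

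\emph{Item (5).} I handle it by cases on the short normal form $\alpha = \beta+\omega^\gamma$. If $\gamma=0$ both sides equal $\beta$. If $\gamma$ is a successor, the claim is $\beta+\omega^{\gamma-1}\ell \Rightarrow_x \beta+\omega^{\gamma-1}k$, which follows from item (2) and item (1). If $\gamma$ is a limit, I need $\omega^{\{\gamma\}(\ell)} \Rightarrow_x \omega^{\{\gamma\}(k)}$. This follows from item (4) once I know $\{\gamma\}(\ell) \Rightarrow_x \{\gamma\}(k)$, which is the inductive hypothesis at the smaller ordinal $\gamma$. So items (4) and (5) must be proved by a simultaneous induction on the ordinal.

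\emph{Item (4).} It reduces via transitivity to one step, $\omega^\alpha \Rightarrow_x \omega^{\{\alpha\}(x)}$, for $\alpha>0$ and $x\ge1$.
\begin{itemize}
\item If $\alpha$ is a limit, this is immediate since $\{\omega^\alpha\}(x) = \omega^{\{\alpha\}(x)}$.
\item If $\alpha=\delta+1$, then $\{\omega^\alpha\}(x) = \omega^\delta x$. Since $x\ge1$, item (2) gives $\omega^\delta x \Rightarrow_x \omega^\delta$, which is the claim.
\end{itemize}
Here the hypothesis $x\ge 1$ is essential.

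\emph{Item (3).} This is the main obstacle, because it changes the subscript. I first prove monotonicity in the subscript: $\alpha \Rightarrow_x \beta$ implies $\alpha \Rightarrow_{x+1} \beta$. The argument is by induction on $\alpha$, using item (5) to show $\{\alpha\}(x+1) \Rightarrow_{x+1} \{\alpha\}(x)$ and then transitivity. The delicate point is that item (5) is applied with subscript $x+1$, which is consistent with its statement.

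Then, for (3), I induct along the chain from $\alpha$ to $\beta$. Take $\alpha' = \{\alpha\}(x)$, so that $\alpha' \Rightarrow_x \beta$.
\begin{itemize}
\item If $\alpha' > \beta+1$, the inductive hypothesis gives $\alpha' \Rightarrow_{x+1} \beta+1$. I also need $\alpha \Rightarrow_{x+1} \alpha'$, which follows from $\{\alpha\}(x+1) \Rightarrow_{x+1} \{\alpha\}(x)$ (item (5) and monotonicity). Transitivity then finishes.
\item If $\alpha' = \beta+1$, the same bridge $\alpha \Rightarrow_{x+1} \alpha'$ suffices.
\item If $\alpha' = \beta$, then $\alpha$ has $\{\alpha\}(x)=\beta$ but $\alpha>\beta+1$, so $\alpha$ is a limit or of the form $\gamma+\omega^{\delta}$ with $\delta\ge1$. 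Then $\{\alpha\}(x+1)$ strictly exceeds $\{\alpha\}(x)$ by a "one more copy" increment. Its chain under $x+1$ passes through $\beta+1$: by \Cref{lem:mesh-evaluation} and items (1) and (2), the extra summand $\omega^{\delta'}$ reduces to $1$ before vanishing. I expect the case analysis here, especially when the exponent is itself a limit, to require the most care.
\end{itemize}
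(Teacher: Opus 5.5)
Your arguments for items (1), (2), (4), (5) and for monotonicity in the subscript are sound. This is a genuinely different, self-contained route: the paper proves none of the five items and only cites the corresponding lemmas of Ketonen and Solovay. As a side remark, your proof of (4) uses only (2) and transitivity, so (5) follows by a plain induction on $\alpha$ and no simultaneous induction is needed.

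\textbf{The gap in item (3).} It lies in the last case, $\{\alpha\}(x)=\beta$, when the short normal form $\alpha=\delta+\omega^{\gamma}$ has $\gamma$ a limit. Then $\beta=\delta+\omega^{\{\gamma\}(x)}$ and $\{\alpha\}(x+1)=\delta+\omega^{\{\gamma\}(x+1)}$. There is no extra summand that could ``reduce to $1$''. What has to be shown is $\omega^{\{\gamma\}(x+1)}\Rightarrow_{x+1}\omega^{\{\gamma\}(x)}+1$, and you leave exactly this open.

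It cannot be closed by an argument uniform in $x\geq 0$, which is what your sketch attempts. Take $x=0$:
\begin{itemize}
\item $\{\omega^\omega\}(0)=\omega^{\{\omega\}(0)}=\omega^0=1$, so $\omega^\omega\Rightarrow_0 1$;
\item $\omega^\omega>2$;
\item the $\Rightarrow_1$-chain of $\omega^\omega$ is $\omega^\omega,\omega,1,0$, which never visits $2$.
\end{itemize}
So (3) as literally stated (``$x\geq 0$'') is false at $x=0$. The paper's applications of (3) all have $x\geq 1$. Any correct proof must therefore use $x\geq 1$ somewhere, and yours never does.

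\textbf{How to close it for $x\geq 1$.} Switch from induction along the chain to strong induction on $\alpha$, because the recursive call happens at the level of exponents. Put $\mu=\{\gamma\}(x)$.
\begin{itemize}
\item By (5) at subscript $x$ (this is where $x\geq1$ is needed), $\{\gamma\}(x+1)\Rightarrow_x\mu$, with $\{\gamma\}(x+1)>\mu$.
\item Since $\{\gamma\}(x+1)<\gamma<\omega^\gamma\leq\alpha$, the induction hypothesis for (3), or equality, gives $\{\gamma\}(x+1)\Rightarrow_{x+1}\mu+1$.
\item Item (4) lifts this to $\omega^{\{\gamma\}(x+1)}\Rightarrow_{x+1}\omega^{\mu+1}$.
\item Next, $\{\omega^{\mu+1}\}(x+1)=\omega^\mu\cdot(x+1)\Rightarrow_{x+1}\omega^\mu\cdot 2=\omega^\mu+\omega^\mu\Rightarrow_{x+1}\omega^\mu+1$. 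This uses (2), which needs $x+1\geq 2$ and so again $x\geq 1$. It also uses (1) and the fact that every chain with subscript $y\geq 1$ from a nonzero ordinal passes through $1$.
\item That last fact holds because the only $\nu>0$ with $\{\nu\}(y)=0$ is $\nu=1$. You also rely on it implicitly in your successor-exponent case, so state and prove it.
\item Finally apply (1) with $\lambda=\delta$, using $\delta\gg\omega^{\{\gamma\}(x+1)}$.
\end{itemize}
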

\begin{proof}
(1) is \cite[Lemma 1, part 2.4]{ketonen1981rapidly},
(2) is \cite[Lemma 3, part 2.4]{ketonen1981rapidly},
(3) is \cite[Lemma 2, part 2.6]{ketonen1981rapidly},
(4) is \cite[Lemma 5, part 2.4]{ketonen1981rapidly},
(5) is \cite[Theorem 2.4]{ketonen1981rapidly} for $\alpha$ limit. If $\alpha$ is 0 or successor, then $\{\alpha\}(\ell) = \{\alpha\}(k)$.
\end{proof}

The following lemma states that the $\Rightarrow_x$ relation is a sufficient hypothesis for the Hardy-like hierarchy of functions to behave nicely. \Cref{lem:h-hierarchy}(1,3) is a Ketonen-Solovay counterpart of Bigorajska and Kotlarski~\cite[Lemma 4]{bigorajska1999partition}:

\begin{lemma}\label[lemma]{lem:h-hierarchy}
Let $h$ be as above. Let $0 < \alpha, \beta < \epsilon_0$ and $x \in \NN$. Then:
\begin{itemize}
    \item[(1)] $h_{\beta}$ is increasing 
    \item[(2)] if $x \in \dom h$ and $h(x) \in \dom h_{\beta}$ then $x \in \dom h_{\beta}$
    \item[(3)] if $\beta \Rightarrow_x \alpha$ and $x \in \dom h_{\beta}$ then $x \in \dom h_\alpha$ and $h_{\alpha}(x) \leq h_{\beta}(x)$
    \item[(4)] if $x \in \dom h_\beta$, then $h_\beta(x) \geq h(x)$
\end{itemize}
\end{lemma}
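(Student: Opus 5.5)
We prove the four items by a simultaneous transfinite induction on $\beta$ (and on $\alpha$ for item (3)). The engine of every step is the defining equation $h_\beta(x) = h_{\{\beta\}(x)}(h(x))$, which reduces any claim about $h_\beta$ to a claim about $h_{\{\beta\}(x)}$ with $\{\beta\}(x) < \beta$. Item (2) is immediate from the definition: if $x \in \dom h$ and $h(x) \in \dom h_{\{\beta\}(x)}$ then $h_\beta(x)\downarrow$. The clause $h(x) \in \dom h_\beta$ in (2) reads most naturally as $h(x) \in \dom h_{\{\beta\}(x)}$. If it is meant literally as $h(x) \in \dom h_\beta$, we derive it from (3) applied at the point $h(x)$: since $\beta \Rightarrow_{h(x)} \{\beta\}(x)$ via \Cref{lem:basic-arrow}(5) when $\beta$ is a limit, and trivially otherwise, we get $h(x) \in \dom h_{\{\beta\}(x)}$. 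So we prove (3) first and deduce (2) from it.

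\textbf{Item (4).} This goes by induction on $\beta$. We have $h_\beta(x) = h_{\{\beta\}(x)}(h(x))$. If $\{\beta\}(x) = 0$ this equals $h(x)$. Otherwise the induction hypothesis gives $h_{\{\beta\}(x)}(h(x)) \geq h(h(x)) > h(x)$, using that $h$ is increasing with $h(y) > y$.

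\textbf{Item (3).} This goes by induction on the length of the chain $\beta \Rightarrow_x \alpha$. If $\beta = \alpha$ there is nothing to prove. Otherwise $\{\beta\}(x) \Rightarrow_x \alpha$. The key step is to pass from the point $x$ to the point $h(x)$: we want $\{\beta\}(x) \Rightarrow_{h(x)} \{\alpha\}(x)$. I would get this from a monotonicity fact, namely that $\gamma \Rightarrow_x \delta$ implies $\gamma \Rightarrow_y \delta$ for $y \geq x$. This fact follows from the remark after the definition, citing Bigorajska--Kotlarski: the relation is unchanged when $\{\gamma\}(x)$ is replaced by $\{\gamma\}(y)$ with $y \le x$.

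Concretely, unfold the chain $\beta = \gamma_0, \gamma_1 = \{\gamma_0\}(x), \dots, \gamma_m = \alpha$. Then $h_\beta(x) = h_{\gamma_1}(h(x))$. By induction on $m$, using (4) together with $h(x) > x$ and the monotonicity fact, we obtain $\gamma_1 \Rightarrow_{h(x)} \{\alpha\}(x)$. Then $h_{\gamma_1}(h(x)) \geq h_{\{\alpha\}(x)}(h(x)) = h_\alpha(x)$, with domain preservation coming from the induction hypothesis, since $\gamma_1 < \beta$.

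The subtle point is the case where $\{\alpha\}(x)$ is not reachable from $\gamma_1$. Here I plan the cleaner route: first prove the auxiliary statement \qt{$\gamma \Rightarrow_x \delta$ implies $h_\gamma(y) \geq h_\delta(y)$ for all $y \geq x$ in $\dom h_\gamma$} by induction on $\gamma$. We compare $\{\gamma\}(y)$ with $\{\delta\}(y)$ using that $\gamma \Rightarrow_y \delta$ with $\gamma \neq \delta$ gives $\{\gamma\}(y) \Rightarrow_y \delta$. If $\delta = \{\gamma\}(y)$ we conclude via (1) and (4). Otherwise we iterate down the chain, each time applying the induction hypothesis to the strictly smaller ordinal $\{\gamma\}(y)$ at the point $h(y) \geq y$, together with (1). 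I expect this case to be the main obstacle: keeping the point parameter monotone while descending the chain, and using \Cref{lem:basic-arrow}(5) to absorb the change $x \mapsto h(x)$ in the fundamental sequences.

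\textbf{Item (1).} This also goes by induction on $\beta$. For $x < y$ in $\dom h_\beta$ we have $h_\beta(x) = h_{\{\beta\}(x)}(h(x))$ and $h_\beta(y) = h_{\{\beta\}(y)}(h(y))$. By \Cref{lem:basic-arrow}(5), $\{\beta\}(y) \Rightarrow_{h(x)} \{\beta\}(x)$ holds (for $x \geq 1$; the case $x = 0$ is handled directly). By monotonicity it also holds at $h(y)$, so (3) gives $h_{\{\beta\}(y)}(h(y)) \geq h_{\{\beta\}(x)}(h(y))$. The induction hypothesis (the function $h_{\{\beta\}(x)}$ is increasing) together with $h(y) > h(x)$ then gives strict inequality; when $\{\beta\}(x) = 0$ this step is just $h(y) > h(x)$.

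Since (1) and (3) depend on each other, the overall induction is simultaneous on $\beta$, proving (1)--(4) together at each ordinal.
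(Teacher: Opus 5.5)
Your argument is correct, but for item (3) it takes a different route from the paper. The paper keeps the subscript of $\Rightarrow$ fixed. It writes $h_\beta(x) = h_{\{\beta\}(x)}(h(x)) > h_{\{\beta\}(x)}(x) \geq h_\alpha(x)$, using (2) and (1) for $\{\beta\}(x)$ to come back from the point $h(x)$ to $x$, and then (3) for $\{\beta\}(x)$ with $\{\beta\}(x) \Rightarrow_x \alpha$. This needs nothing about $\Rightarrow_x$ beyond its definition, but it forces (1)--(3) into a genuinely mutual induction.

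You instead stay at the point $h(x)$ and compare $\{\beta\}(x)$ with $\{\alpha\}(x)$ there. That costs monotonicity of $\Rightarrow_x$ in $x$, which you can get straight from \Cref{lem:basic-arrow}(5): for $x<y$ one has $\{\gamma\}(y) \Rightarrow_y \{\gamma\}(x)$, and then you induct on $\gamma$. In exchange, (3) becomes a self-contained induction on $\beta$ using neither (1) nor (2). Afterwards (2) follows from (3) for $\beta$ itself at the point $h(x)$, as you observe.

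The ``subtle point'' you anticipate does not arise. The relation $\alpha \Rightarrow_x \{\alpha\}(x)$ is a single step of the definition, so $\{\beta\}(x) \Rightarrow_x \alpha \Rightarrow_x \{\alpha\}(x)$ together with monotonicity gives $\{\beta\}(x) \Rightarrow_{h(x)} \{\alpha\}(x)$ in all cases; item (4) plays no role here. Your fallback auxiliary statement is essentially the paper's argument reordered. If you keep it, note that invoking (1) for $\delta$ at $y$ needs $y \in \dom h_\delta$, which comes from (2) for $\delta$, not from (4).

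Some further remarks:

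\begin{itemize}
\item Item (2) is meant literally. It is applied in that form in \Cref{prop:pigeonhole-one-step} to descend from $h^{\ell}(h(x))$ to $h(x)$, whereas the reading $h(x) \in \dom h_{\{\beta\}(x)}$ would just restate the definition. So your derivation of the literal version is the one to keep.
\item Your direct induction for (4) is slightly more robust than the paper's reduction to (3) via $\beta \Rightarrow_x 1$. That reduction fails for $x = 0$: for instance $\{\omega\}(0) = 0$, so $\omega \not\Rightarrow_0 1$.
\item In (1), the side condition of \Cref{lem:basic-arrow}(5) bears on the subscript, which is $h(x) \geq 1$. So no separate case $x = 0$ is needed.
\end{itemize}
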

\begin{proof}
We prove (1),(2) and (3) by mutual induction on $\beta$. If $\beta = 1$ then the property holds as $h_{1} = h$ is increasing and the only $0<\alpha<\epsilon_0$ such that $\beta \Rightarrow_x \alpha$ is $\alpha = 1$. 

Let $\beta > 1$ and assume that $(1),(2)$ and $(3)$ hold for every $0<\beta' < \beta$. Let $x,y \in \dom h_{\beta}$ such that $x < y$, then $h_{\beta}(y) = h_{\{\beta\}(y)}(h(y))$. As $\{\beta \}(y) < \beta$ and as, by \Cref{lem:basic-arrow}(5), $\{\beta \}(y) \Rightarrow_{h(y)} \{\beta\}(x)$, then by $(3)$ for $\{\beta\}(y)$ we have $h(y) \in \dom h_{\{\beta\}(x)}$ and $h_{\{\beta\}(y)}(h(y)) \geq h_{\{\beta\}(x)}(h(y))$. From $x \in \dom h_{\beta}$ we get $h(x) \in \dom h_{\{\beta\}(x)}$, hence, by $(1)$ for $\{\beta\}(x)$, we get $h_{\{\beta\}(x)}(h(y)) > h_{\{\beta\}(x)}(h(x))$. Thus, $h_{\beta}(y) > h_{\beta}(x)$ and $h_{\beta}$ is increasing. Thus $(1)$ holds for $\beta$.

Let $x \in \dom h$ be such that $h(x) \in \dom h_{\beta}$. Then we have $h_{\beta}(h(x)) = h_{\{\beta\}(h(x))}(h(h(x)))$ hence $h(h(x)) \in \dom h_{\{\beta\}(h(x))}$. By $(2)$ for $\{\beta\}(h(x))$ we have $h(x) \in \dom h_{\{\beta\}(h(x))}$. Then, as \Cref{lem:basic-arrow}(5) $\{\beta\}(h(x)) \Rightarrow_{h(x)} \{\beta\}(x)$, we also have $h(x) \in \dom h_{\{\beta\}(x)}$ by $(3)$ for $\{\beta\}(h(x))$, hence $x \in \dom h_{\beta}$. Thus $(2)$ holds for $\beta$.

Let $x \in \NN$ and $0<\alpha$ be such that $\beta \Rightarrow_x \alpha$ and $h_{\beta}(x) \downarrow$. Since $\beta \Rightarrow_x \alpha$, either $\beta = \alpha$ or $\{\beta\}(x) \Rightarrow_x \alpha$. If $\beta = \alpha$ then $h_{\alpha}(x) \downarrow$ and $h_{\alpha}(x) = h_{\beta}(x)$, in which case (3) holds. If $\{\beta\}(x) \Rightarrow_x \alpha$, then as $h_{\beta}(x) = h_{\{\beta\}(x)}(h(x))$, we have $x \in \dom h_{\{\beta\}(x)}$ by $(2)$ for $\{\beta\}(x)$ and $h_{\{\beta\}(x)}(h(x)) > h_{\{\beta\}(x)}(x)$ by $(1)$ for $\{\beta\}(x)$. From $\{\beta\}(x) \Rightarrow_x \alpha$ and $(3)$ for $\{\beta\}(x)$, we get $x \in \dom h_{\alpha}$ and $ h_{\{\beta\}(x)}(x) \geq h_{\alpha}(x)$. So $h_{\beta}(x) = h_{\{\beta\}(x)}(h(x)) > h_{\{\beta\}(x)}(x) \geq h_{\alpha}(x)$, Thus $(3)$ holds for $\beta$.

Last, since $\beta \Rightarrow_x 1$ for every~$\beta > 0$ and $x \in \NN$, then (4) follows from (3).
\end{proof}

The following lemma is the Ketonen-Solovay counterpart of Bigorajska and Kotlarski~\cite[Lemma 7]{bigorajska1999partition}.
Formulated in terms of largeness, it corresponds to Ketonen and Solovay~\cite[Lemma 4.6]{ketonen1981rapidly}.

\begin{lemma}\label[lemma]{lem:h-composition}
    Let $\alpha \ll \beta$ and $x \in \NN$, if $h_{\beta + \alpha}(x) \downarrow$ then $h_{\beta}(h_{\alpha}(x)) \downarrow$ and $h_{\beta + \alpha}(x) = h_{\beta}(h_{\alpha}(x))$.
\end{lemma}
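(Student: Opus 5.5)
We proceed by induction on $\alpha$, with $\beta$ fixed and arbitrary $x$. The claim is that whenever $\alpha \ll \beta$ and $h_{\beta+\alpha}(x)\downarrow$, then $h_\alpha(x)\downarrow$, $h_\beta(h_\alpha(x))\downarrow$, and the two values agree.

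\textbf{Base case $\alpha = 0$.} Here $\beta+\alpha=\beta$ and $h_0(x)=x$, so the statement is trivial. If $\beta = 0$ the statement is equally trivial, since then $\beta+\alpha = \alpha$ and $h_0$ is the identity. So we may assume $\alpha,\beta\neq 0$.

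\textbf{Inductive step $\alpha>0$, $\beta > 0$.} Suppose $h_{\beta+\alpha}(x)\downarrow$.

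1. Since $\beta+\alpha>0$, the definition gives $x\in\dom h$, $h(x)\in \dom h_{\{\beta+\alpha\}(x)}$, and
$$h_{\beta+\alpha}(x)=h_{\{\beta+\alpha\}(x)}(h(x)).$$

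2. Since $\beta \gg \alpha$ and $\alpha\neq 0$, \Cref{lem:mesh-evaluation} gives $\{\beta+\alpha\}(x)=\beta+\{\alpha\}(x)$.

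3. We must check that $\beta \gg \{\alpha\}(x)$ still holds, so that the induction hypothesis applies to $\alpha' = \{\alpha\}(x) < \alpha$. This is the only delicate point. The fundamental sequence step only replaces the last term $\omega^\gamma$ of $\alpha$ by terms whose exponents are $\leq \gamma$. Hence the largest exponent of $\{\alpha\}(x)$ is at most the largest exponent of $\alpha$, which is at most the smallest exponent of $\beta$. One can see this directly from the three cases of the definition of $\{\cdot\}(x)$. If $\{\alpha\}(x)=0$, the relation holds by convention.

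4. Apply the induction hypothesis to $\{\alpha\}(x)$ at the point $h(x)$:
$$h_{\beta+\{\alpha\}(x)}(h(x)) = h_\beta\bigl(h_{\{\alpha\}(x)}(h(x))\bigr),$$
with both inner and outer values defined.

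5. Since $x\in\dom h$ and $h(x)\in\dom h_{\{\alpha\}(x)}$, the definition gives $h_\alpha(x)\downarrow$ with $h_\alpha(x)=h_{\{\alpha\}(x)}(h(x))$.

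6. Chaining the equalities yields $h_{\beta+\alpha}(x)=h_\beta(h_\alpha(x))$, with everything defined.

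The main (mild) obstacle is verifying in step 3 that $\gg$ is preserved under taking fundamental sequences. Everything else is direct unfolding of the definitions together with \Cref{lem:mesh-evaluation}.
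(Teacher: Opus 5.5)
Your proposal is correct and follows the paper's proof: induction on $\alpha$, unfolding $h_{\beta+\alpha}(x)=h_{\{\beta+\alpha\}(x)}(h(x))$, rewriting $\{\beta+\alpha\}(x)=\beta+\{\alpha\}(x)$ by the mesh-evaluation lemma, and applying the induction hypothesis to $\{\alpha\}(x)$ at the point $h(x)$. The one difference is that you explicitly check that $\beta \gg \{\alpha\}(x)$ is preserved, which the paper simply asserts, and your case analysis of the fundamental sequence for that check is sound.
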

\begin{proof}
By induction on $\alpha$. If $\alpha = 0$ then, for $x \in \NN$ and $\beta \gg \alpha$ such that $h_{\beta + \alpha}(x) \downarrow$, we have $h_{\beta}(x) \downarrow$, then, as $h_{0}(x) = x$ we have $h_{\beta}(x) = h_{\beta}(h_{\alpha}(x)) \downarrow$ and $h_{\beta}(h_{\alpha}(x)) = h_{\beta + \alpha}(x)$ .

Let $\alpha > 0$ and assume the property to hold for every $\alpha' < \alpha$. Let $x \in \NN$ and $\beta \gg \alpha$, if $h_{\beta + \alpha}(x) \downarrow$ then $h_{\{\beta + \alpha\}(x)}(h(x)) \downarrow$ and $h_{\{\beta + \alpha\}(x)}(h(x)) = h_{\beta + \alpha}(x)$. By \Cref{lem:mesh-evaluation}, $\{\beta + \alpha\}(x) = \beta + \{\alpha\}(x)$, so $h_{\beta + \{\alpha\}(x)}(h(x)) = h_{\beta + \alpha}(x)$. As $\{\alpha\}(x) < \alpha$ and $\{\alpha\}(x) \ll \beta$, by the induction hypothesis we have $h_{\beta}(h_{\{\alpha\}(x)}(h(x))) \downarrow$ and $h_{\beta}(h_{\{\alpha\}(x)}(h(x))) = h_{\beta + \{\alpha\}(x)}(h(x)) = h_{\beta + \alpha}(x)$. And, as $h_{\{\alpha\}(x)}(h(x)) = h_{\alpha}(x)$, we have $h_{\beta + \alpha}(x) = h_{\beta}(h_{\alpha}(x))$.
\end{proof}

We now prove our core proposition. The intuition goes as follows: Suppose $0 < \lambda < \epsilon_0$ and $x > 0$.
The evaluation of $h_{\lambda}(x)$ unfolds into $h_{\{\lambda\}(x)}(h(x))$, then $h_{\{\{\lambda\}(x)\}(h(x))}(h(h(x))$, and so on.
Thus, the $i$th iteration of the evaluation of $h_\lambda(x)$, if defined, corresponds to $h_{\lambda_i}(x_i)$, where $x_i$ and $\lambda_i$ are defined inductively by $x_0 = x$, $\lambda_0 = \lambda$ and $x_{i+1} = h(x_i)$, $\lambda_{i+1} = \{\lambda_i\}(x_i)$. Note that since the function~$h$ is increasing, so is the sequence $x_0 < x_1 < \dots$

The definition of $\{\lambda\}(x)$ is done by considering the short Cantor normal form, by reducing first the smallest component of the Cantor normal form. The evaluation either decreases a successor ordinal by one, or replaces a limit ordinal by a smaller one, with coefficient~$x$. Since the sequence $\lambda_0 > \lambda_1 > \dots$ is defined by successive evaluations with an increasing sequence $x_0 < x_1 < \dots$, the higher components will be evaluated with larger values than the smaller components, and therefore this evaluation order ensures that the sequence $\lambda_0 > \lambda_1 > \cdots$ decreases as slowly as possible.

Based on this evaluation order, if $\lambda = \beta + \alpha$, with $\beta \gg \alpha$, the sequence $\lambda_0 > \lambda_1 > \cdots$ corresponds to evaluating $\alpha$ successively with $x_0, x_1, \dots$ until we reach a stage~$i$ such that $\alpha_i = 0$. Then we evaluate $\beta$ with $x_{i+1}, x_{i+2}, \dots$. 

One could consider other evaluation orders, in which one would alternate some evaluation steps of $\beta$, and some of $\alpha$. 
In terms of the Hardy-like hierarchy, one evaluation step of $\beta$ corresponds to $h_{\{\beta\}(x) \oplus \alpha}(h(x))$ while one evaluation step of $\alpha$ corresponds to $h_{\beta + \{\alpha\}(x)}(h(x))$. Since the standard evaluation order is the one making the sequence $\lambda_0 > \lambda_1 > \dots$ decrease as slowly as possible, this translates into the following inequality:
$$h_{\beta + \alpha}(x) = h_{\beta + \{\alpha\}(x)}(h(x)) \geq h_{\{\beta\}(x) \oplus \alpha}(h(x))$$

% One could consider another evaluation order, in which $\beta$ is first evaluated, until we reach a stage~$i$ such that $\beta_i = 0$, and then only we evaluate $\alpha$. In terms of the Hardy-like hierarchy, it corresponds to the computation of $h_{\{\beta\}(x) \oplus \alpha}(x)$ \quentin{Pas vraiment, ça correspondrait plutôt à $h_{\alpha}(h_{\beta}(x))$, $h_{\{\beta\}(x) \oplus \alpha}(x)$ correspond juste à un pas d'évaluation du coté $\beta$.}.
% Since the first evaluation order is the one making the sequence $\lambda_0 > \lambda_1 > \dots$ decrease as slowly as possible, this translates into the following inequality:
% $$h_{\beta + \alpha}(x) = h_{\beta + \{\alpha\}(x)}(h(x)) \geq h_{\{\beta\}(x) \oplus \alpha}(h(x))$$

The following proposition formalizes this intuition in the case $\beta \gg \alpha$. It will then be proven in its most general form in \Cref{thm:pigeonhole-one-step-not-ll}.

\begin{proposition}\label[proposition]{prop:pigeonhole-one-step}
    Let $0 < \alpha, \beta < \epsilon_0$ with $\beta \gg \alpha$ and $x > 0$ an integer. If $h_{\beta + \alpha}(x) \downarrow$, then $h_{\{\beta\}(x) \oplus \alpha}(h(x)) \downarrow$ and $h_{\beta + \alpha}(x) = h_{\beta + \{\alpha\}(x)}(h(x)) \geq h_{\{\beta\}(x) \oplus \alpha}(h(x))$.
\end{proposition}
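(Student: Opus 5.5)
The equality $h_{\beta+\alpha}(x) = h_{\beta+\{\alpha\}(x)}(h(x))$ comes straight from the definitions. Since $\beta \gg \alpha$ and $\alpha \neq 0$, \Cref{lem:mesh-evaluation} gives $\{\beta+\alpha\}(x) = \beta + \{\alpha\}(x)$, and the definition of $h_{\beta+\alpha}(x)$ then unfolds to this expression. The real work is the inequality together with the definedness of $h_{\{\beta\}(x)\oplus\alpha}(h(x))$. For definedness I will always go through \Cref{lem:h-hierarchy}(3): every comparison will be obtained from some relation $\gamma \Rightarrow_y \delta$ with $y \in \dom h_\gamma$, and that lemma yields both $y \in \dom h_\delta$ and $h_\delta(y) \leq h_\gamma(y)$.

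\textbf{Base case $\alpha = 1$.} Here $\{\beta\}(x)\oplus 1 = \{\beta\}(x)+1$ and, writing $y = h(x)$, we have $h_{\beta+1}(x) = h_\beta(y) = h_{\{\beta\}(y)}(h(y))$. On the other side, $h_{\{\beta\}(x)+1}(y) = h_{\{\beta\}(x)}(h(y))$. Since $y > x \geq 1$, \Cref{lem:basic-arrow}(5) gives $\{\beta\}(y) \Rightarrow_{h(y)} \{\beta\}(x)$. \Cref{lem:h-hierarchy}(3) then yields definedness and $h_{\{\beta\}(x)}(h(y)) \leq h_{\{\beta\}(y)}(h(y))$, which is the desired inequality (when $\{\beta\}(x)=0$ the comparison is just $h(y) \leq h_\beta(y)$, which is \Cref{lem:h-hierarchy}(4)). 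The same argument gives a reusable fact: for every $z > x$ with $z \in \dom h_\beta$,
$$h_\beta(z) \geq h_{\{\beta\}(x)}(h(z)).$$

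\textbf{General $\alpha$.} I will use \Cref{lem:h-composition} to write $h_{\beta+\alpha}(x) = h_\beta(h_\alpha(x))$. Set $z = h_\alpha(x) \geq h(x) > x$, using \Cref{lem:h-hierarchy}(4). The fact above then gives $h_{\beta+\alpha}(x) \geq h_{\{\beta\}(x)}(h(h_\alpha(x)))$. It remains to compare this with $h_{\{\beta\}(x)\oplus\alpha}(h(x))$.

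Write $\{\beta\}(x) = \beta' + \mu$, where $\mu$ collects the Cantor normal form terms of $\{\beta\}(x)$ whose exponents lie below the largest exponent of $\alpha$. Then $\beta' \gg \mu \oplus \alpha$ and $\{\beta\}(x)\oplus\alpha = \beta' + (\mu\oplus\alpha)$. Moreover $\mu$ consists only of terms $\omega^{\{\gamma\}(x)}$ or $\omega^{\gamma-1}\cdot x$ produced by evaluating the last term $\omega^\gamma$ of $\beta$ at $x$, where $\gamma$ is at least the largest exponent of $\alpha$. Applying \Cref{lem:h-composition} on both sides reduces the goal to showing
$$h_{\mu\oplus\alpha}(h(x)) \leq h_\mu(h(h_\alpha(x))) = h_{\mu}(h_{\alpha+1}(x)),$$
followed by monotonicity of $h_{\beta'}$ from \Cref{lem:h-hierarchy}(1). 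This reduced inequality is the same kind of statement with smaller ordinals: it says that evaluating the lower components first is slower. I will prove it by an inner induction on $\alpha$, applying the proposition's own inductive hypothesis to $\{\alpha\}(h(x))$ and using the relations of \Cref{lem:basic-arrow}(1),(2),(4) to rearrange terms.

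\textbf{Expected obstacle.} The main difficulty is this last step, when $\mu$ and $\alpha$ genuinely interleave in the natural sum. In that situation $\mu \oplus \alpha$ is not of the form $\mu + \alpha$, so the induction must track exactly how $\{\mu\oplus\alpha\}(y)$ decomposes. I will also need the appropriate $\Rightarrow_y$ relations at arguments $y$ that may be at most the pseudo-norms of the ordinals involved, so \Cref{lem:standard-sequence-vs-psn-2} cannot be used and every arrow has to come from \Cref{lem:basic-arrow}.
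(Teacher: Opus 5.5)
\textbf{There is a genuine gap in the general step.} Your base case is correct, and so is your ``reusable fact'' $h_\beta(z) \geq h_{\{\beta\}(x)}(h(z))$ for $z > x$. For finite $\alpha = k$ your argument also goes through, since then $h(h_k(x)) = h^{k+1}(x) = h_k(h(x))$; this is essentially the paper's Subcase 2.1.

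For infinite $\alpha$, the bound $h_{\beta+\alpha}(x) \geq h_{\{\beta\}(x)}(h(h_\alpha(x)))$ is true but discards too much. The remaining inequality $h_{\{\beta\}(x)}(h(h_\alpha(x))) \geq h_{\{\beta\}(x)\oplus\alpha}(h(x))$ is false in general. Take $h(y) = y+1$ on all of $\NN$, $\beta = \omega^2$, $\alpha = \omega$ and $x = 1$. One computes $h_k(y) = y+k$, $h_\omega(y) = 2y+1$ and $h_{\omega\cdot 2}(y) = 4y+3$. Hence $h_\alpha(1) = 3$ and $\{\beta\}(1) = \omega$, so your lower bound is $h_\omega(4) = 9$. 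The target, however, is $h_{\omega\cdot 2}(h(1)) = h_{\omega\cdot 2}(2) = 11$.

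In your decomposition this is the case $\mu = 0$, where the reduced inequality reads $h_\alpha(h(x)) \leq h(h_\alpha(x))$, that is, $5 \leq 4$. The failure is hidden by your identity $h(h_\alpha(x)) = h_{\alpha+1}(x)$, which is wrong for $\alpha \geq \omega$. By definition $h_{\alpha+1}(x) = h_\alpha(h(x))$, which is $5$ here, not $4$. So no inner induction can establish your reduced inequality.

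\textbf{The missing idea.} The slack that must absorb $\alpha$ sits in the ordinal $\{\beta\}(c)$, where $c = h_\alpha(x)$. It does not sit in the shift of the argument from $h(x)$ to $h(c)$, so you must not trade $\{\beta\}(c)$ for $\{\beta\}(x)$. The paper keeps $h_\beta(c) = h_{\{\beta\}(c)}(h(c))$ and uses \Cref{lem:psn-nb-step}: for $\alpha \geq \omega$, $c > \psn(\alpha) + x$.

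Take the case where the smallest exponent of $\beta$ is a successor $\beta_0'+1$ strictly above the top exponent of $\alpha$. There the bound on $c$ gives $\omega^{\beta_0'}\cdot c > \omega^{\beta_0'}\cdot x + \alpha$. Since $h(c) > \psn(\alpha)$, \Cref{lem:standard-sequence-vs-psn-2} together with \Cref{lem:basic-arrow}(1),(2) upgrades this to $\{\beta\}(c) \Rightarrow_{h(c)} \{\beta\}(x) \oplus \alpha$. In the example this is $\omega\cdot 3 \Rightarrow_4 \omega\cdot 2$. \Cref{lem:h-hierarchy}(3) then gives $h_\beta(c) \geq h_{\{\beta\}(x)\oplus\alpha}(h(c))$. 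Finally, \Cref{lem:h-hierarchy}(1),(2) lower the argument from $h(c)$ to $h(x)$, which yields both definedness and the inequality.

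The remaining cases are handled as follows. The limit-exponent case is similar, splitting on how the top exponent of $\alpha$ compares with $\{\beta_0\}(x)$. When $\alpha$ is a single term $\omega^{\beta_0}\cdot a$ with $\beta_0$ the smallest exponent of $\beta$, the identity $\beta + \{\alpha\}(x) = \{\beta\}(x)\oplus\alpha$ holds outright. Other cases with equal exponents reduce to the strict one.

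So, contrary to the obstacle you anticipate, \Cref{lem:standard-sequence-vs-psn-2} is the central tool. The point of passing through $c = h_\alpha(x)$ is precisely that every argument used afterwards exceeds $\psn(\alpha)$.
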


\begin{proof}
The proof is by case analysis on the Cantor normal forms, and not by induction.
Write $\alpha = \omega^{\alpha_n} \cdot a_n + \dots + \omega^{\alpha_0} \cdot a_0$ and $\beta = \omega^{\beta_k} \cdot b_k + \dots + \omega^{\beta_0} \cdot b_0$ their Cantor normal form. From the assumption $\beta \gg \alpha$, we have $\beta_0 \geq \alpha_n$.

\textbf{Case 1: $n = 0$ and $\alpha_n = \beta_0$.} In that case, $\beta + \{\alpha\}(x) = \{\beta\}(x) \oplus \alpha$, hence $h_{\{\beta\}(x) \oplus \alpha}(h(x)) \downarrow$ and $h_{\beta + \{\alpha\}(x)}(h(x)) = h_{\{\beta\}(x) \oplus \alpha}(h(x))$.\\

If we are not in this case, we can freely assume that $\alpha_n < \beta_0$ in the Cantor normal form of $\alpha$ and $\beta$. Indeed, if $\alpha_n = \beta_0$, we can consider $\alpha' = \omega^{\alpha_{n-1}}a_{n-1} + \dots + \omega^{\alpha_0}a_0$ and $\beta' =  \omega^{\beta_k}b_k + \dots + \omega^{\beta_0}(b_0 + a_n)$, we then have $0 < \alpha', \beta' < \epsilon_0$, $\alpha' \ll \beta'$, $\beta + \alpha = \beta' + \alpha'$, $\beta + \{\alpha\}(x) = \beta' + \{\alpha'\}(x)$ and $\{\beta\}(x) \oplus \alpha = \{\beta'\}(x) \oplus \alpha'$, making it sufficient to prove the result for $\alpha'$ and $\beta'$.\\

%\textbf{Case 2: $\alpha = \gamma + \omega^{\alpha'}$ and $\beta = \delta + \omega^{\beta'}$ for some $\gamma,\delta < \epsilon_0$, some $\alpha' < \beta'$ with $\beta' = \beta'' + 1$ and some $0<a_0,b_0$}

\textbf{Case 2: $\alpha_n < \beta_0$ and $\beta_0$ is successor.} Let $\beta_0'$ be such that $\beta_0 = \beta_0' + 1$. In that case, write $\beta = \delta + \omega^{\beta_0'+1}$. By our assumption that $\alpha_n < \beta_0$, we have $\omega^{\beta_0'+1} > \alpha$ and $\omega^{\beta_0'} \gg \alpha$.
By \Cref{lem:h-composition}, letting $c = h_{\{\alpha\}(x)}(h(x)) = h_{\alpha}(x)$, we have 
$$h_{\beta + \{\alpha\}(x)}(h(x)) = h_{\beta}(c) = h_{\delta + \omega^{\beta'_0}\cdot c}(h(c))$$

Consider the following two subcases:

\textbf{Subcase 2.1: $\alpha < \omega$.} In that case, $\alpha = k$ for some $k \in \NN$. This gives us $h_{\beta + \alpha}(x) = h_{\beta}(h^k(x)) = h_{\{\beta\}(h^k(x))}(h^{k+1}(x))$ (where $h^{\ell}$ denotes the composition of $h$ with itself $\ell$ times). Then, as $h$ is increasing, we have $\{\beta\}(h^k(x)) \Rightarrow_{h^{k+1}(x)} \{\beta\}(x)$ by \Cref{lem:basic-arrow}(5). Hence, by \Cref{lem:h-hierarchy}(3), we have $h_{\{\beta\}(x)}(h^{k+1}(x)) \downarrow$ and 

$$h_{\{\beta\}(h^k(x))}(h^{k+1}(x)) \geq h_{\{\beta\}(x)}(h^{k+1}(x)) = h_{\{\beta\}(x) + \alpha}(h(x)) = h_{\{\beta\}(x) \oplus \alpha}(h(x))$$

Hence $h_{\beta + \alpha}(x) \geq h_{\{\beta\}(x) \oplus \alpha}(h(x))$, which is what we wanted.

\textbf{Subcase 2.2: $\alpha \geq \omega$.} In that case, we can use the strict inequality obtained in \Cref{lem:psn-nb-step} to get $c > \psn(\alpha) + x$.

As $\beta_0 > \alpha_n$, we get $\beta_0' \geq \alpha_n$ and therefore have $\omega^{\beta_0'} \cdot (\psn(\alpha) + 1) > \alpha$ by definition of $\psn(\alpha)$. Thus, by \Cref{lem:standard-sequence-vs-psn-2}, we have $\omega^{\beta_0'} \cdot (\psn(\alpha) + 1) \Rightarrow_{h(c)} \alpha$ (using the fact that $h(c) > c > \psn(\alpha)$).

We have $\omega^{\beta_0'}\cdot c \Rightarrow_{h(c)} \omega^{\beta_0'} \cdot (x + \psn(\alpha) + 1) = \omega^{\beta_0'} \cdot x + \omega^{\beta_0'} \cdot (\psn(\alpha) + 1)$ by \Cref{lem:basic-arrow}(2). Then, as $\omega^{\beta_0'} \cdot x \gg \omega^{\beta_0'} \cdot (\psn(\alpha) + 1)$, we can apply \Cref{lem:basic-arrow}(1) and obtain $\omega^{\beta_0'} \cdot x + \omega^{\beta_0'} \cdot (\psn(\alpha) + 1) \Rightarrow_{h(c)} \omega^{\beta_0'} \cdot x + \alpha$. Hence that $\omega^{\beta_0'}\cdot c \Rightarrow_{h(c)} \omega^{\beta_0'} \cdot x + \alpha$.

By another application of \Cref{lem:basic-arrow}(1) (using the fact that $\delta \gg \omega^{\beta_0'}\cdot c$), we obtain $\delta + \omega^{\beta_0'}\cdot c \Rightarrow_{h(c)} \delta + \omega^{\beta_0'} \cdot x + \alpha$.

By \Cref{lem:h-hierarchy}(3), we thus have $h_{\delta + \omega^{\beta_0'} \cdot x + \alpha}(h(c)) \downarrow$ and $$h_{\delta + \omega^{\beta_0'}\cdot c}(h(c)) \geq h_{\delta + \omega^{\beta_0'} \cdot x + \alpha}(h(c))$$

As $h(c) = h(h_{\alpha}(x)) =  h^{\ell}(h(x))$ for some $\ell \in \NN$, we have, by several applications of \Cref{lem:h-hierarchy}(2) that $h_{\delta + \omega^{\beta_0'} \cdot x + \alpha}(h(x)) \downarrow$ and that $$h_{\delta + \omega^{\beta_0'} \cdot x + \alpha}(h(c)) \geq h_{\delta + \omega^{\beta_0'} \cdot x + \alpha}(h(x)) = h_{\{\beta\}(x) \oplus \alpha}(h(x))$$

Which is what we wanted.

\textbf{Case 3: $\alpha_n < \beta_0$ and $\beta_0$ limit.} In that case, write $\beta = \delta + \omega^{\beta_0}$ and $\alpha = \gamma + \omega^{\alpha_0}$.
Similarly, by \Cref{lem:mesh-evaluation} and \Cref{lem:h-composition}, letting $c = h_{\{\alpha\}(x)}(h(x)) = h_{\alpha}(x)$, we have 
$$h_{\beta + \alpha}(x) = h_{\beta + \{\alpha\}(x)}(h(x)) = h_{\beta}(c) = h_{\delta + \omega^{\{\beta_0\}(c)}}(h(c))$$

Since $c > x$, we have $\{\beta_0\}(c) \Rightarrow_{y} \{\beta_0\}(x) + 1$ for every $y \geq 2$ by \Cref{lem:basic-arrow}(3,5). 

By \Cref{lem:psn-nb-step}, $c \geq \psn(\alpha) + x > \psn(\alpha)$. Thus, by \Cref{lem:standard-sequence-vs-psn}, we have $\{\beta_0\}(c) > \alpha_n$, and, by \Cref{lem:standard-sequence-vs-psn-2}, $\{\beta_0\}(c) \Rightarrow_c \alpha_n$. By \Cref{lem:basic-arrow}(3), we have $\{\beta_0\}(c) \Rightarrow_{h(c)} \alpha_n + 1$, as $h(c) > c$.

Thus, we have both $\{\beta_0\}(c) \Rightarrow_{h(c)} \{\beta_0\}(x) + 1$ and $\{\beta_0\}(c) \Rightarrow_{h(c)} \alpha_n + 1$. Depending on whether $\alpha_n$ or $\{\beta_0\}(x)$ appears first in the sequence $\{\beta_0\}(c) \Rightarrow_{h(c)} \{\{\beta_0\}(c)\}(h(c)) \Rightarrow_{h(c)} \{\{\{\beta_0\}(c)\}(h(c))\}(h(c)) \Rightarrow_{h(c)} \dots$, we either have $\{\beta_0\}(x) \Rightarrow_{h(c)} \alpha_n$ or $\alpha_n \Rightarrow_{h(c)} \{\beta_0\}(x)$. 

We can then consider the following two subcases:

\textbf{Subcase 3.1: $\alpha_n > \{\beta_0\}(x)$, in which case $\alpha_n \Rightarrow_{h(c)} \{\beta_0\}(x)$.} By \Cref{lem:basic-arrow}(2,4), we have $$\omega^{\{\beta_0\}(c)} \Rightarrow_{h(c)} \omega^{\alpha_n + 1} \Rightarrow_{h(c)} \omega^{\alpha_n}\cdot h(c) \Rightarrow_{h(c)} \omega^{\alpha_n}\cdot (c+1)$$

Thus, by \Cref{lem:basic-arrow}(1), since $\delta \gg \omega^{\{\beta_0\}(c)}$,

$$\delta + \omega^{\{\beta_0\}(c)} \Rightarrow_{h(c)} \delta +  \omega^{\alpha_n}\cdot (c+1)$$

Therefore, by \Cref{lem:h-hierarchy}(3), $h_{\delta + \omega^{\alpha_n}\cdot (c+1)}(h(c)) \downarrow$, and, letting $d = h_{\omega^{\alpha_n}}(h(c))$,

$$h_{\delta + \omega^{\{\beta_0\}(c)}}(h(c)) \geq h_{\delta + \omega^{\alpha_n}\cdot (c+1)}(h(c)) = h_{\delta + \omega^{\alpha_n}\cdot c}(d)$$

As $\omega^{\alpha_n} \Rightarrow_{h(c)} \omega^{\{\beta_0\}(x)}$ by \Cref{lem:basic-arrow}(4), we have $h_{\omega^{\{\beta_0\}(x)}}(h(c)) \downarrow$ and $d \geq h_{\omega^{\{\beta_0\}(x)}}(h(c))$ by \Cref{lem:h-hierarchy}(3).

By \Cref{lem:psn-nb-step}, $h_{\omega^{\{\beta_0\}(x)}}(h(c)) \geq \psn(\{\beta_0\}(x)) + h(c)$. Hence, we have $d \geq \psn(\{\beta_0\}(x)) + h(c) > \psn(\{\beta_0\}(x)) + \psn(\alpha)$ as $h(c) > c > \psn(\alpha)$.

Then, from $c > \psn(\alpha)$ and  $\alpha_n > \{\beta_0\}(x)$, we get $\omega^{\alpha_n}\cdot c > \omega^{\{\beta_0\}(x)} \oplus \alpha$, and, as $d > \psn(\omega^{\{\beta_0\}(x)}) + \psn(\alpha) \geq \psn(\omega^{\{\beta_0\}(x)} \oplus \alpha)$, we have $\omega^{\alpha_n}\cdot c \Rightarrow_{d} \omega^{\{\beta_0\}(x)} \oplus \alpha$ by \Cref{lem:standard-sequence-vs-psn-2}.

Again, as $\delta \gg \omega^{\{\beta_0\}(c)}$, we can apply \Cref{lem:basic-arrow}(1) and obtain $\delta + \omega^{\alpha_n}\cdot c \Rightarrow_{d} \delta + (\omega^{\{\beta_0\}(x)} \oplus \alpha) = \{\beta\}(x) \oplus \alpha$.

Thus, by \Cref{lem:h-hierarchy}(3), $h_{\{\beta\}(x) \oplus \alpha}(d) \downarrow$ and  

$$h_{\delta + \omega^{\alpha_n}\cdot c}(d) \geq h_{\{\beta\}(x) \oplus \alpha}(d)$$

As $d =  h^{\ell}(h(x))$ for some $\ell \in \NN$, we have, by several applications of \Cref{lem:h-hierarchy}(2) that $h_{\{\beta\}(x) \oplus \alpha}(h(x)) \downarrow$ and that $h_{\{\beta\}(x) \oplus \alpha}(d) \geq h_{\{\beta\}(x) \oplus \alpha}(h(x))$.

Combined with the other inequalities, we get $h_{\beta + \alpha}(x) \geq h_{\{\beta\}(x) \oplus \alpha}(h(x))$.

\textbf{Subcase 3.2: $\alpha_n \leq \{\beta_0\}(x)$, in which case $\{\beta_0\}(x) \Rightarrow_{h(c)} \alpha_n$.} By \Cref{lem:basic-arrow}(2,4), we have $$\omega^{\{\beta_0\}(c)} \Rightarrow_{h(c)} \omega^{\{\beta_0\}(x) + 1} \Rightarrow_{h(c)} \omega^{\{\beta_0\}(x)}\cdot h(c) \Rightarrow_{h(c)} \omega^{\{\beta_0\}(x)}\cdot (c+1)$$

Then, from $c > \psn(\alpha)$ and $\alpha_n \leq \{\beta_0\}(x)$, we get $\omega^{\{\beta_0\}(x)}\cdot c > \alpha$, hence, by \Cref{lem:standard-sequence-vs-psn-2} we get $\omega^{\{\beta_0\}(x)}\cdot c \Rightarrow_{h(c)} \alpha$. As $\omega^{\{\beta_0\}(x)} \gg \alpha$, \Cref{lem:basic-arrow}(1) gives us $$\omega^{\{\beta_0\}(x)}\cdot (c + 1) = \omega^{\{\beta_0\}(x)} + \omega^{\{\beta_0\}(x)}\cdot c \Rightarrow_{h(c)} \omega^{\{\beta_0\}(x)} + \alpha$$

Then, as $\delta \gg \omega^{\{\beta_0\}(c)}$, another application of \Cref{lem:basic-arrow}(1) gives us $$\delta + \omega^{\{\beta_0\}(c)} \Rightarrow_{h(c)} \delta + \omega^{\{\beta_0\}(x)} + \alpha = (\delta + \omega^{\{\beta_0\}(x)}) \oplus \alpha = \{\beta\}(x) \oplus \alpha$$

Thus, by \Cref{lem:h-hierarchy}(3), $h_{\{\beta\}(x) \oplus \alpha}(x) \downarrow$ and  

$$h_{\delta + \omega^{\{\beta_0\}(c)}}(h(c)) \geq h_{\{\beta\}(x) \oplus \alpha}(h(c))$$

Again, by several applications of \Cref{lem:h-hierarchy}(2), we have $h_{\{\beta\}(x) \oplus \alpha}(h(x)) \downarrow$ and $h_{\{\beta\}(x) \oplus \alpha}(h(c)) \geq h_{\{\beta\}(x) \oplus \alpha}(h(x))$.

Hence, we have $h_{\beta + \alpha}(x) \geq h_{\{\beta\}(x) \oplus \alpha}(h(x))$. \\

This completes our proof of \Cref{prop:pigeonhole-one-step}.
\end{proof}

\begin{lemma}\label[lemma]{lem:split-cantor-normal-form}
    Let $\alpha, \beta < \epsilon_0$, then there exists $\alpha', \beta' < \epsilon_0$ with $\alpha' \ll \beta'$ and such that $\beta \oplus \alpha = \beta' + \alpha'$ and $\{\beta\}(x) \oplus \alpha = \{\beta'\}(x) \oplus \alpha'$ for every $x \in \NN$
\end{lemma}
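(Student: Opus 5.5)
The plan is to build $\alpha'$ and $\beta'$ explicitly by moving into $\beta$ every component of $\alpha$ whose exponent is too large to sit below $\beta$.

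\textbf{The degenerate case.} If $\beta = 0$, take $\beta' = 0$ and $\alpha' = \alpha$. Then $\alpha' \ll \beta'$ holds by the convention in the definition of $\gg$. We also have $\beta \oplus \alpha = \alpha = 0 + \alpha$, and since $\{0\}(x) = 0$ both sides of the second equation equal $\alpha$.

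\textbf{The construction.} Assume $\beta \neq 0$, and write its short Cantor normal form $\beta = \delta + \omega^{\gamma}$ with $\delta \gg \omega^\gamma$, so $\gamma$ is the smallest exponent of $\beta$. Split the Cantor normal form of $\alpha$ as $\alpha = \alpha_{\geq} + \alpha_{<}$. Here $\alpha_{\geq}$ collects the terms $\omega^{\alpha_i}\cdot a_i$ with $\alpha_i \geq \gamma$, and $\alpha_<$ collects those with $\alpha_i < \gamma$; in particular $\alpha = \alpha_{\geq} \oplus \alpha_<$. Set
$$\beta' = \beta \oplus \alpha_{\geq} = (\delta \oplus \alpha_{\geq}) + \omega^\gamma, \qquad \alpha' = \alpha_<.$$

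\textbf{Verification.} First, $\beta'$ is nonzero with smallest exponent exactly $\gamma$, and every exponent of $\alpha'$ is $< \gamma$. Hence $\alpha' \ll \beta'$. Since sums in $\gg$-order coincide with natural sums, $\beta' + \alpha' = \beta' \oplus \alpha' = \beta \oplus \alpha_{\geq} \oplus \alpha_< = \beta \oplus \alpha$, using associativity and commutativity of $\oplus$.

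For the fundamental sequences, note that $\delta \oplus \alpha_{\geq}$ has all exponents $\geq \gamma$, so $\beta' = (\delta \oplus \alpha_\geq) + \omega^\gamma$ is the short Cantor normal form of $\beta'$. By the definition of $\{\cdot\}(x)$ (or \Cref{lem:mesh-evaluation}),
$$\{\beta'\}(x) = (\delta\oplus\alpha_\geq) + \{\omega^\gamma\}(x) \quad\text{and}\quad \{\beta\}(x) = \delta + \{\omega^\gamma\}(x).$$
The ordinal $\{\omega^\gamma\}(x)$ is either $0$, or $\omega^{\gamma-1}\cdot x$, or $\omega^{\{\gamma\}(x)}$ with $\{\gamma\}(x) < \gamma$. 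In all three cases its exponents are $<\gamma$, which is at most every exponent of $\delta$ and of $\delta \oplus \alpha_\geq$. So both sums are natural sums, and
$$\{\beta'\}(x) = \delta \oplus \alpha_\geq \oplus \{\omega^\gamma\}(x) = \{\beta\}(x) \oplus \alpha_\geq.$$
Adding $\alpha'$ with $\oplus$ gives $\{\beta'\}(x)\oplus\alpha' = \{\beta\}(x)\oplus\alpha_\geq\oplus\alpha_< = \{\beta\}(x)\oplus\alpha$ for every $x$.

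\textbf{Main obstacle.} The only delicate point is justifying that the short Cantor normal form of $\beta'$ really ends in the same $\omega^\gamma$ as that of $\beta$. This is what forces the split of $\alpha$ at $\gamma$, with terms of exponent exactly $\gamma$ absorbed into $\beta'$ rather than left in $\alpha'$. With that choice, the remaining checks are bookkeeping with the component-wise description of $\oplus$.
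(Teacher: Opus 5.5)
Your proposal is correct and is essentially the paper's proof. The paper also treats $\beta = 0$ trivially; otherwise it puts into $\beta'$ the coefficients of $\beta \oplus \alpha$ at every exponent at least the smallest exponent of $\beta$, and keeps the lower part of $\alpha$ as $\alpha'$, which is exactly your $\beta' = \beta \oplus \alpha_{\geq}$, $\alpha' = \alpha_<$. Your check via the short Cantor normal form $\beta' = (\delta \oplus \alpha_{\geq}) + \omega^\gamma$ is a more explicit version of the paper's one-line computation of $\{\beta'\}(x) \oplus \alpha'$.
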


\begin{proof}
If $\beta = 0$ then take $\alpha' = \alpha$ and $\beta' = \beta$. If $\beta \neq 0$, then write $\alpha = \omega^{\gamma_n} \cdot a_n + \dots + \omega^{\gamma_0} \cdot a_0$ and $\beta = \omega^{\gamma_n} \cdot b_n + \dots + \omega^{\gamma_0} \cdot b_0$ with $a_0, \dots, a_n,b_0, \dots, b_n \in \NN$ and $\gamma_0 < \dots < \gamma_n < \epsilon_0$. Let $i \leq n$ be the smallest index such that $b_i \neq 0$, and let $\beta' = \omega^{\gamma_n} \cdot (b_n+a_n) + \dots + \omega^{\gamma_i} \cdot (b_i + a_i)$ and $\alpha' = \omega^{\gamma_{i-1}} \cdot (b_{i-1}+a_{i-1}) + \dots + \omega^{\gamma_0} \cdot (b_0 + a_0)$. 

We have 
$\beta \oplus \alpha = \beta' + \alpha' = \omega^{\gamma_n}(b_n+a_n) + \dots + \omega^{\gamma_0}(b_0 + a_0)$ and $\{\beta'\}(x) \oplus \alpha' = (\omega^{\gamma_n}(b_n+a_n) + \dots + \omega^{\gamma_i}(b_i + a_i - 1) + \dots + \omega^{\gamma_0}(b_0 + a_0)) \oplus \{\omega^{\gamma_i}\}(x) = \{\beta\}(x) \oplus \alpha$
\end{proof}

The following theorem informally states that the evaluation order of the fundamental sequences is the one which makes the Hardy-like hierarchy grow the fastest. In the case $\alpha \ll \beta$, $\beta + \alpha = \beta \oplus \alpha$, hence it generalizes \Cref{prop:pigeonhole-one-step}.

\begin{theorem}[Optimal growth]\label[theorem]{thm:pigeonhole-one-step-not-ll}
    Let $\alpha, \beta < \epsilon_0$ with $\beta > 0$ and $x  > 0$ an integer. If $h_{\beta \oplus \alpha}(x) \downarrow$ then $h_{\{\beta\}(x) \oplus \alpha}(h(x)) \downarrow$ and $h_{\beta \oplus \alpha}(x) = h_{\{\beta \oplus \alpha\}(x)}(h(x)) \geq h_{\{\beta\}(x) \oplus \alpha}(h(x))$.
\end{theorem}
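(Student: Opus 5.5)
The plan is to reduce \Cref{thm:pigeonhole-one-step-not-ll} to \Cref{prop:pigeonhole-one-step} by means of \Cref{lem:split-cantor-normal-form}. Given $\alpha, \beta$ with $\beta > 0$, that lemma provides $\alpha' \ll \beta'$ with
\[
\beta \oplus \alpha = \beta' + \alpha' \quad\text{and}\quad \{\beta\}(x) \oplus \alpha = \{\beta'\}(x) \oplus \alpha' \text{ for every } x .
\]
The equality $h_{\beta \oplus \alpha}(x) = h_{\{\beta \oplus \alpha\}(x)}(h(x))$ is just the definition of the Hardy-like hierarchy, since $\beta \oplus \alpha > 0$ and $h_{\beta\oplus\alpha}(x)\downarrow$. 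It therefore remains to show that $h_{\{\beta'\}(x) \oplus \alpha'}(h(x))\downarrow$ and that it is bounded above by $h_{\beta'+\alpha'}(x)$.

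First I will check that $\beta' > 0$. In the construction of \Cref{lem:split-cantor-normal-form}, $\beta'$ contains the term $\omega^{\gamma_i}(b_i+a_i)$ with $b_i \neq 0$, so this holds.

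Then I split on $\alpha'$.

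\textbf{Case $\alpha' = 0$.} Here $\beta \oplus \alpha = \beta'$ and $\{\beta'\}(x) \oplus 0 = \{\beta'\}(x)$. The claim becomes $h_{\beta'}(x) = h_{\{\beta'\}(x)}(h(x))$, which holds by definition, with definedness inherited from $h_{\beta'}(x)\downarrow$.

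\textbf{Case $\alpha' > 0$.} Here $\alpha' \ll \beta'$, both are nonzero, $x>0$, and $h_{\beta'+\alpha'}(x)\downarrow$. So \Cref{prop:pigeonhole-one-step} applies directly and gives $h_{\{\beta'\}(x)\oplus\alpha'}(h(x))\downarrow$ together with $h_{\beta'+\alpha'}(x) \geq h_{\{\beta'\}(x) \oplus \alpha'}(h(x))$. Rewriting both sides with the identities from the lemma yields the statement.

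The only real obstacle is making sure that \Cref{lem:split-cantor-normal-form} really delivers the identity $\{\beta\}(x)\oplus\alpha = \{\beta'\}(x)\oplus\alpha'$ in all cases. The delicate point is that $\{\beta\}(x)$ lowers the lowest nonzero component $\omega^{\gamma_i}$ of $\beta$, and $\{\beta'\}(x)$ must lower exactly that same component. This is true because $\gamma_i$ is the least exponent of $\beta'$, so the short Cantor normal form of $\beta'$ ends with $\omega^{\gamma_i}$.

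I also have to handle the natural sum with $\{\omega^{\gamma_i}\}(x)$ correctly when $\gamma_i$ is a limit. In that case $\{\omega^{\gamma_i}\}(x) = \omega^{\{\gamma_i\}(x)}$, which merges under $\oplus$ with the lower terms of $\alpha'$ and of $\beta$. The identity holds because $\oplus$ is commutative and associative on Cantor normal forms: both sides equal
\[
\bigl(\text{common part with coefficient } b_i+a_i-1 \text{ at } \gamma_i\bigr) \oplus \omega^{\{\gamma_i\}(x)} .
\]
Once this is verified, the theorem follows immediately.
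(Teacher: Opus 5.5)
Your proposal is correct and is essentially the paper's proof: you apply \Cref{lem:split-cantor-normal-form} to rewrite $\beta \oplus \alpha$ as $\beta' + \alpha'$ with $\alpha' \ll \beta'$, handle $\alpha' = 0$ by the definition of $h_{\beta'}$, and handle $\alpha' > 0$ by \Cref{prop:pigeonhole-one-step}. Your explicit checks that $\beta' > 0$ (needed for the proposition's hypothesis $0 < \alpha, \beta$) and that $\{\beta\}(x) \oplus \alpha = \{\beta'\}(x) \oplus \alpha'$ holds are details the paper leaves implicit, and you verify them correctly.
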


\begin{proof}
Let $\alpha',\beta' < \epsilon_0$ be obtained from \Cref{lem:split-cantor-normal-form} applied on $\alpha, \beta$. As $\beta \oplus \alpha = \beta' + \alpha'$ and $\{\beta\}(x) \oplus \alpha = \{\beta'\}(x) \oplus \alpha'$, it is sufficient to show that $h_{\{\beta'\}(x) \oplus \alpha'}(h(x)) \downarrow$ and that $h_{\beta' \oplus \alpha'}(x) = h_{\{\beta' \oplus \alpha'\}(x)}(h(x)) \geq h_{\{\beta'\}(x) \oplus \alpha'}(h(x))$.
    
If $\alpha' = 0$, then the result holds by definition of $h_{\beta'}$.

If $\alpha' \neq 0$, then the result is that of \Cref{prop:pigeonhole-one-step}.
\end{proof}

\subsection{Partition theorem}

We now translate this abstract study of the Hardy-like hierarchy into the framework of $\alpha$-largeness, to prove \Cref{thm:epsilon0-pigeonhole}.

\begin{definition}
    Let $A = \{x_0, \dots, x_{n-1}\} \subseteq \NN$ a finite set. We define $h^A: A  \setminus \{\max A\} \to A$ by $h^A(x_i) = x_{i+1}$ for every $i < n-1$. In other words, $h^A$ is the successor operation on the ordered set~$(A, <_\NN)$.
\end{definition}

The following lemma is the heart of the correspondence between Hardy-like hierarchies and of Ketonen and Solovay's $\alpha$-largeness.

\begin{lemma}
Let $0 < \alpha < \epsilon_0$, then $A$ is at most $\alpha$-large iff $h_{\alpha}^{A}(\min A) \uparrow$.
\end{lemma}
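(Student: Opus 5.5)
We prove the equivalence by unfolding the definition of $h^A_\alpha$ and matching it, step by step, with the computation of $\{\alpha\}(A')$ for initial segments $A'$ of $A$.

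Write $A = \{x_0 < \dots < x_{n-1}\}$ and $h = h^A$, so that $h(x_i) = x_{i+1}$ for $i < n-1$ and $\dom h = A \setminus \{\max A\}$. Set $\lambda_0 = \alpha$ and $\lambda_{i+1} = \{\lambda_i\}(x_i)$. Then $\lambda_{i+1} = \{\alpha\}(\{x_0,\dots,x_i\})$.

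The first step is an auxiliary claim, proved by induction on $\alpha$ (simultaneously for all $i$): for every $i < n$ and every $\gamma < \epsilon_0$, we have $h_\gamma(x_i)\downarrow$ iff there is some $j$ with $i \leq j < n$ such that $\{\gamma\}(\{x_i, \dots, x_{j-1}\}) = 0$. In that case $h_\gamma(x_i) = x_j$ for the least such $j$.

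The base case $\gamma = 0$ is immediate, taking $j = i$. For $\gamma > 0$, the definition gives $h_\gamma(x_i)\downarrow$ iff $x_i \in \dom h$ (that is, $i < n-1$) and $h_{\{\gamma\}(x_i)}(x_{i+1})\downarrow$. By the induction hypothesis applied to $\{\gamma\}(x_i) < \gamma$ at position $i+1$, the latter holds iff some $j \geq i+1$ satisfies $\{\{\gamma\}(x_i)\}(\{x_{i+1},\dots,x_{j-1}\}) = 0$. This expression equals $\{\gamma\}(\{x_i,\dots,x_{j-1}\})$. Since $\gamma \neq 0$, the case $j = i$ is excluded anyway, so the condition $j \geq i+1$ costs nothing. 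One small point needs care: when $i = n-1$ and $\gamma > 0$, $h_\gamma(x_i)$ is undefined, and indeed no $j$ works, because $\{\gamma\}(\emptyset) = \gamma \neq 0$. This matches.

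Applying the claim with $i = 0$ and $\gamma = \alpha > 0$ gives: $h^A_\alpha(\min A)\downarrow$ iff some $j \leq n-1$ makes $\{x_0,\dots,x_{j-1}\}$ $\alpha$-large. Since once the iterate reaches $0$ it stays $0$, largeness is upward closed along initial segments. Hence this condition is equivalent to $\{x_0,\dots,x_{n-2}\} = A \setminus \{\max A\}$ being $\alpha$-large. Taking negations, $h^A_\alpha(\min A)\uparrow$ iff $A \setminus \{\max A\}$ is $\alpha$-small, which is exactly the statement that $A$ is at most $\alpha$-large.

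The empty set needs a separate convention check. If $A = \emptyset$, then $\min A$ is undefined. I would take $h_\alpha(\min A)\uparrow$ vacuously, since $\emptyset$ is at most $0$-large and hence at most $\alpha$-large; otherwise I would simply assume $A \neq \emptyset$.

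The main obstacle is the indexing in the auxiliary claim. It must be formulated for arbitrary starting positions $x_i$ and arbitrary ordinals $\gamma$, not just for $\alpha$, because the recursion shifts both. The boundary case $\max A \notin \dom h$ is exactly what produces the "minus $\max A$" in the definition of at most $\alpha$-large.
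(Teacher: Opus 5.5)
Your proof is correct and takes essentially the same route as the paper: induction on the ordinal, matching one unfolding step $h_\gamma(x_i) = h_{\{\gamma\}(x_i)}(h(x_i))$ with one step of the fundamental-sequence iteration. The only difference is bookkeeping: the paper passes from $A$ to $A \setminus \{\min A\}$, while you keep $A$ fixed and shift the starting index $i$.

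Your strengthened claim is stated as an equivalence and allows $\gamma = 0$ and arbitrary starting positions. As a result it proves both directions and handles the boundary cases explicitly, namely $\card A \leq 1$ and $\{\alpha\}(x_0) = 0$. The paper's short write-up spells out only the direction from $h^A_\alpha(\min A)\uparrow$ to at-most-largeness and treats these cases implicitly. One small slip: your auxiliary claim is proved by induction on $\gamma$, not on $\alpha$.
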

\begin{proof}
By induction on $\alpha$. The result holds for $\alpha = 1$, as $h_{\alpha}^A(\min A) \uparrow$ if and only if $|A| \leq 1$.
Let $1 < \alpha < \epsilon_0$ and assume the property holds for every $\beta < \alpha$. Let $A = \{x_0, x_1, \dots,x_n\} \subseteq \NN$ be such that $h_{\alpha}^A(x_0) \uparrow$, then, $h_{\{\alpha\}(x_0)}^A(x_1) \uparrow$ and therefore $h_{\{\alpha\}(x_0)}^{A \setminus \{x_0\}}(x_1) \uparrow$. By the inductive hypothesis, $A \setminus \{x_0\}$ is therefore at most $\{\alpha\}(x_0)$-large, hence $A$ is at most $\alpha$-large.
\end{proof}

The following theorem is an iterated counterpart of \Cref{thm:pigeonhole-one-step-not-ll} in the language of 
$\alpha$-largeness. It has several consequences, such as the splitting property (\Cref{thm:splitting-property}) and multiple versions of the pigeonhole principle (\Cref{thm:pigeonhole-v1,thm:pigeonhole-v2})

\begin{repmaintheorem}{thm:epsilon0-pigeonhole}
    Let $\beta, \gamma < \epsilon_0$ be two ordinals, and $B,C \subseteq \NN \setminus \{0\}$ be at most $\beta$-large and $\gamma$-large respectively. Then $B \cup C$ is at most $(\beta \oplus \gamma)$-large.
\end{repmaintheorem}

\begin{proof}
By assumption we have $h^B_{\beta}(\min B) \uparrow$ and $h^C_{\gamma}(\min C) \uparrow$. We want to show that $h^{B\cup C}_{\beta \oplus \gamma}(\min \{\min B, \min C\}) \uparrow$.

Write $B \cup C = \{x_0 < x_1 < \dots < x_{k-1}\}$ and consider the sequences of ordinals $(\beta_i)_{i < k}$ and  $(\gamma_i)_{i < k}$ defined inductively as follows: $\beta_0 = \beta$ and $\gamma_0 = \gamma$ and if $\beta_i$ and $\gamma_i$ have been defined, let $\beta_{i+1} = \{\beta_i\}(x_i)$ and $\gamma_{i+1} = \gamma_i$ if $x_i \in B$ and $\beta_{i+1} = \beta_i$ and $\gamma_{i+1} = \{\gamma_i\}(x_i)$ if $x_i \in C$. 

Since $B$ is at most $\beta$-large, then $\{\beta\}(B \setminus \{\max B\}) > 0$, thus, if $x_i \in B$ then $\beta_i \neq 0$, similarly, if $x_i \in C$ then $\gamma_i \neq 0$. Hence, if $x_{k-1} \in B$ then $\beta_{k-1} \neq 0$ and if $x_{k-1} \in C$ then $\gamma_{k-1} \neq 0$, in every case $\beta_{k-1} \oplus \gamma_{k-1} \neq 0$.

If $h^{B \cup C}_{\beta_0 \oplus \gamma_0}(x_0) \downarrow$, then, by induction on $i < k$, using \Cref{thm:pigeonhole-one-step-not-ll}, we claim that for every $i < k$, we have $h^{B \cup C}_{\beta_i \oplus \gamma_i}(x_i) \downarrow$ and $h^{B \cup C}_{\beta_i \oplus \gamma_i}(x_i) \geq h^{B \cup C}_{\beta_{i+1} \oplus \gamma_{i+1}}(x_{i+1})$.
Thus, if $h^{B \cup C}_{\beta_0 \oplus \gamma_0}(x_0) \downarrow$ then $h^{B \cup C}_{\beta_{k-1} \oplus \gamma_{k-1}}(x_{k-1}) \downarrow$, which is only possible if  $\beta_{k-1} \oplus \gamma_{k-1} = 0$, contradiction. Thus $h^{B \cup C}_{\beta_0 \oplus \gamma_0}(x_0) \uparrow$ and $B \cup C$ is at most $(\beta \oplus \gamma)$-large.
\end{proof}

The following splitting property generalizes \cite[Lemma 4.6]{ketonen1981rapidly}.
Note that the reversal holds if $\beta \gg \alpha$.

\begin{theorem}[Splitting property]\label[theorem]{thm:splitting-property}
    Let $X \subseteq \NN \setminus \{0\}$ be $(\alpha \oplus \beta)$-large for $\alpha, \beta < \epsilon_0$, then there exists $X_0 < X_1 \subseteq X$ such that $X_0$ is $\alpha$-large and $X_1$ is $\beta$-large.
\end{theorem}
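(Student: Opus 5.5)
The plan is to derive the splitting property directly from \Cref{thm:epsilon0-pigeonhole}, used in contrapositive form, after adding one extra point above $\max X$. That extra point absorbs the off-by-one gap between ``$\alpha$-large'' and ``at most $\alpha$-large''. Fix an $(\alpha \oplus \beta)$-large set $X \subseteq \NN \setminus \{0\}$ and let $y = \max X$; if $X = \emptyset$ then $\alpha \oplus \beta = 0$ and we can take $X_0 = X_1 = \emptyset$. Put $Z = X \cup \{y+1\}$. Since $Z \setminus \{\max Z\} = X$ is $(\alpha \oplus \beta)$-large, $Z$ is \emph{not} at most $(\alpha \oplus \beta)$-large. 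By \Cref{thm:epsilon0-pigeonhole}, for every decomposition $Z = B \cup C$, either $B$ is not at most $\alpha$-large or $C$ is not at most $\beta$-large.

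First I choose $X_0$ to be the shortest initial segment of $X$ that is $\alpha$-large (so $X_0 = \emptyset$ if $\alpha = 0$). Then $X_0 \setminus \{\max X_0\}$ is $\alpha$-small, i.e.\ $X_0$ is at most $\alpha$-large. Next I set $B = X_0$ and $C = Z \setminus X_0 = X_1 \cup \{y+1\}$, where $X_1 = X \setminus X_0$, so that $X_0 < X_1$. By the contrapositive above, $C$ is not at most $\beta$-large. By definition this means $C \setminus \{\max C\} = X_1$ is $\beta$-large, which gives the required pair $X_0 < X_1 \subseteq X$.

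It remains to treat the case where no initial segment of $X$ is $\alpha$-large, i.e.\ $X$ itself is $\alpha$-small. Then I take $B = Z$, which is at most $\alpha$-large because $Z \setminus \{\max Z\} = X$ is $\alpha$-small. I take $C = \emptyset$, which is at most $0$-large by convention. Applying \Cref{thm:epsilon0-pigeonhole} with the ordinal $0$ in place of $\beta$ shows that $Z$ is at most $\alpha$-large. This is consistent, so no contradiction arises yet, and I must argue differently. The point is that $\alpha \le \alpha \oplus \beta$. I would compare the ordinals via \Cref{lem:basic-arrow}(1) and \Cref{lem:split-cantor-normal-form}: write $\alpha \oplus \beta = \beta' + \alpha'$ with $\alpha' \ll \beta'$ and use the arrow relation to see that $(\alpha\oplus\beta)$-largeness implies $\alpha$-largeness. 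Alternatively, and more cleanly, I would apply \Cref{thm:epsilon0-pigeonhole} once more with $B = Z$ and $C$ a set that is at most $\beta$-large, placed disjointly and interleaving harmlessly (for instance $C = \emptyset$ when $\beta = 0$). Reconciling this degenerate case with the main argument, and being careful with the convention for $\emptyset$ and with elements equal to $0$, is the step I expect to need the most care. The main case itself is a one-line application of \Cref{thm:epsilon0-pigeonhole}.
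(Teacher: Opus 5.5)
Your main case is exactly the paper's argument: take $X_0$ to be the shortest $\alpha$-large initial segment, adjoin $\max X+1$, and apply the contrapositive of \Cref{thm:epsilon0-pigeonhole} to $X_0 \cup (X_1 \cup \{\max X + 1\})$.

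The gap is the step you flag yourself. You never show that $X$ is $\alpha$-large, i.e.\ that $X_0$ exists, and neither of your suggested routes closes it.
\begin{itemize}
\item Applying the theorem with $C=\emptyset$ and the ordinal $0$ is vacuous, as you noticed.
\item The arrow route does not work in its direct form, namely $\alpha\oplus\beta \Rightarrow_{\min X} \alpha$ combined with \Cref{lem:h-hierarchy}(3). That relation can fail. For instance $\omega+5 \Rightarrow_1 \omega+4 \Rightarrow_1 \cdots \Rightarrow_1 \omega \Rightarrow_1 1 \Rightarrow_1 0$ never passes through $5$, yet $\{1,\dots,12\}$ is $(\omega+5)$-large with minimum $1$.
\end{itemize}
Your caution is justified, since largeness is not monotone in the ordinal: $\{1,2\}$ is $\omega$-large but $10$-small. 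The paper dispatches this point only with a one-line remark that each Cantor-normal-form coefficient of $\alpha\oplus\beta$ dominates the corresponding coefficient of $\alpha$.

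The missing idea is that your own method handles this case once you use the ordinal $\beta$ instead of $0$, and a singleton instead of $\emptyset$. Let $y=\max X$ and $Z=X\cup\{y+1\}$, as in your notation. Suppose $X$ is $\alpha$-small. Then $\beta>0$, since otherwise $\alpha\oplus\beta=\alpha$. Take $B=X$ and $C=\{y+1\}$.
\begin{itemize}
\item $B$ is at most $\alpha$-large, because $X\setminus\{\max X\}$ is an initial segment of an $\alpha$-small set and hence $\alpha$-small.
\item $C$ is at most $\beta$-large, because $C\setminus\{\max C\}=\emptyset$ and $\{\beta\}(\emptyset)=\beta\neq 0$.
\end{itemize}
\Cref{thm:epsilon0-pigeonhole} then says that $Z=B\cup C$ is at most $(\alpha\oplus\beta)$-large. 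That means $X$ is $(\alpha\oplus\beta)$-small, a contradiction. With this paragraph added, and your main case unchanged, the proof is complete, and somewhat more rigorous than the paper's justification of the existence of $X_0$.
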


\begin{proof}
Let $X_0$ be the prefix of $X$ that is exactly $\alpha$-large, i.e., if $X = \{x_0, \dots, x_{k-1}\}$, we let $X_0 = \{x_0, \dots, x_{i}\}$ for $i < k$ the smallest index such that $\{\alpha\}(x_0, \dots, x_{i}) = 0$. Such a set $X_0$ exists, as every coefficient in the Cantor normal form of $\alpha \oplus \beta$ is bigger than the corresponding one in the Cantor normal form of $\alpha$.

Let $X_1 = X \setminus X_0$, $\star = \max X + 1$ and let $X' = X \cup \{\star\}$. By the contrapositive of \Cref{thm:epsilon0-pigeonhole}, since $X'= X_0 \cup (X_1 \cup \{\star\})$ is not at most $(\alpha \oplus \beta)$-large, either $X_0$ is not at most $\alpha$-large or $X_1 \cup \{\star\}$ is not at most $\beta$-large. Thus, $X_1 \cup \{\star\}$ is not at most $\beta$-large, and $X_1$ is therefore $\beta$-large.
\end{proof}

We now deduce two versions of the pigeonhole principle.

\begin{theorem}[First pigeonhole principle]\label[theorem]{thm:pigeonhole-v1}
    Let $X = X_0 \cup X_1 \subseteq \NN \setminus \{0\}$ be $(\alpha \oplus \beta)$-large for some $\alpha, \beta < \epsilon_0$, then either $X_0 \setminus \{\max X_0\}$ is $\alpha$-large or $X_1 \setminus \{\max X_1\}$ is $\beta$-large or $X_0$ and $X_1$ are exactly $\alpha$-large and exactly $\beta$-large, respectively. 
\end{theorem}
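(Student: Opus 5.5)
We argue by contraposition using \Cref{thm:epsilon0-pigeonhole}. Suppose that neither $X_0 \setminus \{\max X_0\}$ is $\alpha$-large nor $X_1 \setminus \{\max X_1\}$ is $\beta$-large. By definition, this says exactly that $X_0$ is at most $\alpha$-large and $X_1$ is at most $\beta$-large; the degenerate cases where $X_0$ or $X_1$ is empty fall under the convention that $\emptyset$ is at most $0$-large, and hence at most $\alpha$-large for every $\alpha$. It remains to show that, in this situation, $X_0$ is exactly $\alpha$-large and $X_1$ is exactly $\beta$-large. Since being at most $\alpha$-large already gives the smallness of $X_0 \setminus \{\max X_0\}$, what is needed is that $X_0$ is $\alpha$-large and $X_1$ is $\beta$-large.

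Assume for contradiction that $X_0$ is $\alpha$-small. We would like to conclude that $X_0 \cup \{\star\}$ is still at most $\alpha$-large, where $\star = \max X + 1$, in the spirit of the proof of \Cref{thm:splitting-property}. Indeed, $(X_0 \cup \{\star\}) \setminus \{\star\} = X_0$ is $\alpha$-small, so $X_0 \cup \{\star\}$ is at most $\alpha$-large. Moreover $X_1$ is at most $\beta$-large and is contained in $\NN \setminus \{0\}$.

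Applying \Cref{thm:epsilon0-pigeonhole} to $X_0 \cup \{\star\}$ and $X_1$ shows that $X \cup \{\star\}$ is at most $(\alpha \oplus \beta)$-large, which means that $X$ is $(\alpha \oplus \beta)$-small. This contradicts the hypothesis. Symmetrically, adding $\star$ to $X_1$ instead shows that $X_1$ is $\beta$-large.

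The only delicate points are bookkeeping. First, the union need not be disjoint. This does no harm, since \Cref{thm:epsilon0-pigeonhole} places no disjointness requirement on $B$ and $C$, and its proof handles a common element by assigning it to one side. If the proof in fact relies on disjointness, we would replace $X_1$ by $X_1 \setminus X_0$, noting that subsets of at most $\beta$-large sets remain at most $\beta$-large, by monotonicity of largeness. Second, one must check that $\star$ does not lie in $X_1$, which is immediate since $\star > \max X$. Third, the empty-set conventions must be handled, including the case where $\alpha = 0$ and some $X_i$ is empty. I expect the main (minor) obstacle to be verifying that the proof of \Cref{thm:epsilon0-pigeonhole} tolerates overlapping $B$ and $C$.
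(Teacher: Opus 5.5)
Your proposal is correct and follows the paper's proof. You adjoin $\star = \max X + 1$ to one part, observe that $X \cup \{\star\}$ is not at most $(\alpha\oplus\beta)$-large because $X$ is $(\alpha\oplus\beta)$-large, and apply \Cref{thm:epsilon0-pigeonhole} (as a contradiction rather than, as the paper does, its contrapositive), with your remark on overlapping $X_0, X_1$ handling a point the paper leaves implicit.
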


\begin{proof}
Suppose $X_0$ is at most $\alpha$-large, otherwise $X_0 \setminus \{\max X_0\}$ is $\alpha$-large and we are done.
Similarly, suppose $X_1$ is at most $\beta$-large, otherwise $X_1 \setminus \{\max X_1\}$ is $\beta$-large and we are done.
Let $\star = \max X + 1$. The set $X_0 \cup (X_1 \cup \{\star\})$ is not at most $(\alpha \oplus \beta)$-large, hence, by the contrapositive of \Cref{thm:epsilon0-pigeonhole}, $X_1 \cup \{\star\}$ is not at most $\beta$-large. Since $X_1$ is at most $\beta$-large, it is exactly $\beta$-large.
By applying the contrapositive of \Cref{thm:epsilon0-pigeonhole} on the partition $(X_0 \cup \{\star\}) \cup X_1$, we get that $X_0 \cup \{\star\}$ is not at most $\alpha$-large, hence since $X_0$ is at most $\alpha$-large, $X_0$ is exactly $\alpha$-large and we are also done since both $X_0$ and $X_1$ are exactly $\beta$-large.
\end{proof}

\begin{theorem}[Second pigeonhole principle]\label[theorem]{thm:pigeonhole-v2}
    Let $X = X_0 \cup X_1 \cup \{\star\} \subseteq \NN \setminus \{0\}$ be $(\alpha \oplus \beta)$-large for some $\alpha, \beta < \epsilon_0$ and some $\star > X_0 \cup X_1$. Then either $X_0$ is $\alpha$-large or $X_1$ is $\beta$-large.
\end{theorem}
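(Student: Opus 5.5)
We argue by contraposition using \Cref{thm:epsilon0-pigeonhole}. Assume that neither conclusion holds, so $X_0$ is $\alpha$-small and $X_1$ is $\beta$-small. We will show that $X = X_0 \cup X_1 \cup \{\star\}$ is then $(\alpha \oplus \beta)$-small, which contradicts the hypothesis.

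The key observation is the following. If a set $Y$ is $\alpha$-small and $\star > Y$, then $Y \cup \{\star\}$ is at most $\alpha$-large. Indeed, $(Y \cup \{\star\}) \setminus \{\max(Y \cup \{\star\})\} = Y$, and $Y$ is $\alpha$-small by assumption. We apply this to $Y = X_0$, which shows that $X_0 \cup \{\star\}$ is at most $\alpha$-large. We also need $X_1$ to be at most $\beta$-large. Since largeness is upward closed under end-extensions, and $X_1$ is $\beta$-small, removing $\max X_1$ keeps it $\beta$-small: once $\{\beta\}(\cdot)$ is nonzero at the end of $X_1$, it is nonzero at every earlier stage, because the sequence of ordinals is nonincreasing and $\{0\}(x)=0$. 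Hence $X_1 \setminus \{\max X_1\}$ is $\beta$-small, that is, $X_1$ is at most $\beta$-large. If $X_1 = \emptyset$, it is at most $0$-large by convention, and hence at most $\beta$-large.

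Now apply \Cref{thm:epsilon0-pigeonhole} to $B = X_0 \cup \{\star\}$ and $C = X_1$, with $\beta$ and $\gamma$ replaced by $\alpha$ and $\beta$. Both sets lie in $\NN \setminus \{0\}$ since $X$ does. The theorem gives that $B \cup C = X$ is at most $(\alpha \oplus \beta)$-large, that is, $X \setminus \{\star\} = X_0 \cup X_1$ is $(\alpha \oplus \beta)$-small. We then want to conclude that $X$ itself is small, but at most $(\alpha\oplus\beta)$-large does not immediately give that. The main obstacle is therefore to extract a statement about $X$ rather than about $X \setminus \{\star\}$.

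To handle this, we apply the theorem more carefully, so as to gain the extra point $\star$. Use $B = X_0 \cup \{\star\}$ together with $C = X_1 \cup \{\star'\}$ for a fresh point $\star' > \star$. Then $C$ is at most $\beta$-large, because $C \setminus \{\star'\} = X_1$ is $\beta$-small. The union $X \cup \{\star'\}$ is then at most $(\alpha\oplus\beta)$-large by \Cref{thm:epsilon0-pigeonhole}. The sets $B$ and $C$ are disjoint, as the theorem's proof implicitly requires for its bookkeeping. Being at most $(\alpha\oplus\beta)$-large means exactly that $X = (X \cup \{\star'\}) \setminus \{\star'\}$ is $(\alpha \oplus \beta)$-small, which contradicts the hypothesis. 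This is the same device of appending a top element as in the proof of \Cref{thm:pigeonhole-v1}, and the only points to check are disjointness and the edge cases where $X_0$ or $X_1$ is empty, which are handled by the convention on $\emptyset$.
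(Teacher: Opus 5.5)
Your final argument is correct and is exactly the paper's proof: you apply \Cref{thm:epsilon0-pigeonhole} to $B = X_0 \cup \{\star\}$ and $C = X_1 \cup \{\star'\}$ with a fresh top element $\star'$, so that $X = (X \cup \{\star'\}) \setminus \{\star'\}$ comes out $(\alpha \oplus \beta)$-small, and the abandoned first attempt with $C = X_1$ can simply be deleted. One small correction is that the statement does not assume $X_0 \cap X_1 = \emptyset$, so your claim that $B$ and $C$ are disjoint is unjustified; it is also unneeded, since \Cref{thm:epsilon0-pigeonhole} has no disjointness hypothesis (and if you want one, replace $C$ by $C \setminus B$, which is still at most $\beta$-large because $X_1 \setminus X_0 \subseteq X_1$ is $\beta$-small by upward closure of largeness).
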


\begin{proof}
Let $\star' = \max X + 1$. The set $X \cup \{\star'\} =  (X_0 \cup \{\star\}) \cup (X_1 \cup \{\star'\})$ is not at most $(\alpha \oplus \beta)$-large, hence, by the contrapositive of  \Cref{thm:epsilon0-pigeonhole}, either $X_0 \cup \{\star\}$ is not at most $\alpha$-large, in which case $X_0$ is $\alpha$-large, or $X_1 \cup \{\star'\}$ is not at most $\beta$-large, in which case $X_1$ is $\beta$-large.
\end{proof}

Since $\alpha$-largeness is closed under supersets, if $X_0 \cup X_1$ is $(\alpha \oplus \beta)$-large, then so is $X_0 \cup X_1 \cup \{\star\}$ for any $\star > \max X_1$, in which case either $X_0$ is $\alpha$-large or $X_1$ is $\beta$-large. This formulation of the pigeonhole principle, although more natural, is less optimal than \Cref{thm:pigeonhole-v2}.

\section{Largeness below $\omega^\omega$}\label[section]{sec:better-bounds-regular-largeness}

We now turn to the study of $\alpha$-largeness in the restricted setting of $\alpha < \omega^\omega$.
We shall give a particular focus to ordinals of the form $\omega^n \cdot k$ for $n, k \geq 1$. Thanks to Ketonen and Solovay~\cite[Lemma 4.6]{ketonen1981rapidly}, there exists a simple inductive definition of $\alpha$-largeness for ordinals of this form:

\begin{proposition}[Ketonen and Solovay~\cite{ketonen1981rapidly}]\label[proposition]{prop:largeness-below-omegaomega}
A set~$F \finsub \NN$ is 
\begin{itemize}
    \item[(1)] \emph{$\omega^0$-large} iff $F \neq \emptyset$
    \item[(2)] \emph{$\omega^{(n+1)}$-large} iff $F \setminus \min F$ is  $(\omega^n \cdot \min F)$-large
    \item[(3)] \emph{$\omega^n \cdot k$-large} iff
there are $k$ $\omega^n$-large subsets of~$F$
$$
F_0 < F_1 < \dots < F_{k-1}
$$
\end{itemize}
\end{proposition}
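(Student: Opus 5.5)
Item (1) is immediate from the definitions: $\omega^0 = 1$ and $\{1\}(x) = 0$ for every $x$, so $\{1\}(F) = 0$ iff $F$ has at least one element, i.e. iff $F \neq \emptyset$ (the empty composition leaves $1 \neq 0$).

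For item (2), I unfold one step of the fundamental sequence. If $F = \{x_0 < x_1 < \dots < x_s\}$ is nonempty, then by definition $\{\omega^{n+1}\}(F) = \{\{\omega^{n+1}\}(x_0)\}(F \setminus \{x_0\})$. Since the exponent $n+1$ is a successor, $\{\omega^{n+1}\}(x_0) = \omega^n \cdot x_0 = \omega^n \cdot \min F$. Hence $F$ is $\omega^{n+1}$-large iff $F \setminus \{\min F\}$ is $(\omega^n \cdot \min F)$-large. If $F = \emptyset$, both sides fail, as long as we read the right-hand side as false for empty $F$. So (2) is purely definitional.

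Item (3) is the only part with content, and I would prove it by induction on $k$. The case $k = 0$ is trivial, and $k = 1$ is a tautology. For the inductive step, write $\omega^n \cdot (k+1) = \omega^n \cdot k + \omega^n$, with $\omega^n \cdot k \gg \omega^n$. Two facts about the evaluation of $\{\cdot\}(F)$ do the work.

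\textbf{Prefix decomposition.} By \Cref{lem:mesh-evaluation}, as long as the running $\omega^n$-part is nonzero, we have $\{\omega^n \cdot k + \alpha\}(x) = \omega^n \cdot k + \{\alpha\}(x)$. Iterating along $F$, the evaluation of $\omega^n \cdot (k+1)$ first reduces the trailing $\omega^n$ to $0$ using an initial segment $F_0$ of $F$, and only then starts on $\omega^n \cdot k$ with the remaining elements. The segment $F_0$ is exactly $\omega^n$-large, being the shortest $\omega^n$-large prefix.

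\textbf{Monotonicity.} I need two monotonicity properties of largeness:
\begin{itemize}
    \item[(i)] supersets of $\alpha$-large sets are $\alpha$-large;
    \item[(ii)] if $F$ is $\alpha$-large and $F'$ is obtained by replacing $F$ with a subset whose elements are pointwise shifted upward (or simply by taking a subset of a larger set), largeness is preserved.
\end{itemize}
What I really need, for $\alpha < \omega^\omega$, is: if $G \subseteq F$ and $G$ is $\alpha$-large, then $F$ is $\alpha$-large.

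\textbf{Forward direction.} If $F$ is $\omega^n \cdot (k+1)$-large, take $F_0$ as above. The rest, $F \setminus F_0$, is $\omega^n \cdot k$-large, so by induction it contains $F_1 < \dots < F_k$. All of these lie above $F_0$.

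\textbf{Backward direction.} Given $F_0 < \dots < F_k \subseteq F$, the set $F_0 \cup \dots \cup F_k$ is $\omega^n \cdot (k+1)$-large. Indeed, by induction $F_1 \cup \dots \cup F_k$ is $\omega^n \cdot k$-large; the evaluation on $F_0$ kills the trailing $\omega^n$, since $F_0$ contains an exactly $\omega^n$-large prefix. The leftover elements of $F_0$ after that prefix, together with $F_1 \cup \dots \cup F_k$, form a superset of $F_1 \cup \dots \cup F_k$. I then conclude by superset closure.

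I expect superset closure to be the main obstacle; it is the one genuine lemma. I would derive it from the Hardy-like machinery already available. Suppose $G \subseteq F$ is $\alpha$-large, so $G$ is not at most $\alpha$-large after adjoining a point $\star > \max F$. Equivalently, $h^{G\cup\{\star\}}_\alpha(\min G)\downarrow$ by the characterization lemma. The successor function $h^{F\cup\{\star\}}$ grows more slowly than $h^{G\cup\{\star\}}$ on the relevant points, so I would show by induction on $\alpha$ that definedness transfers. This uses \Cref{lem:h-hierarchy}(1),(3) and \Cref{lem:basic-arrow}(5), since smaller arguments $y \le y'$ give $\{\alpha\}(y') \Rightarrow \{\alpha\}(y)$.

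Alternatively, one can use \Cref{thm:epsilon0-pigeonhole} directly. Write $F \cup \{\star\} = (G \cup \{\star\}) \cup (F \setminus G)$, with $F \setminus G$ at most $0$-large only when it is empty. That route does not apply in general, so I would go with the Hardy induction. Alternatively still, cite Ketonen–Solovay for superset closure, which the paper already uses implicitly after \Cref{thm:pigeonhole-v2}.
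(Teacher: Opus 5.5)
Your argument is correct, but for (3) it takes a different route from the paper. The paper runs the same induction on $k$ and simply cites Ketonen--Solovay's Lemma 4.6 for the inductive step: $X$ is $\omega^n\cdot(k+1)$-large iff there are $X_0<X_1\subseteq X$ with $X_0$ $\omega^n$-large and $X_1$ $\omega^n\cdot k$-large. It then applies the induction hypothesis to $X_1$. You instead reprove this two-block characterization from the paper's own tools.

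Your forward direction is an exact computation via \Cref{lem:mesh-evaluation}. Every intermediate value of the trailing part is at most $\omega^n$, and $\omega^n\cdot k\gg\omega^n$. So the evaluation must pass through $\omega^n\cdot k$ right after the shortest $\omega^n$-large prefix $F_0$, and $F\setminus F_0$ is then $\omega^n\cdot k$-large on the nose. No monotonicity is needed here, and you get a canonical (greedy, initial-segment) choice of $F_0$ for free. Your backward direction uses the same prefix computation on $F_0$, plus closure of largeness under supersets, applied twice.

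The trade-off: the paper's version is a two-line citation, while yours moves the external input to superset closure. That is a more basic Ketonen--Solovay fact, and the paper itself invokes it explicitly (in \Cref{prop:regular-construction} and in the remark after \Cref{thm:pigeonhole-v2}), so citing it is legitimate. Your Hardy-hierarchy sketch for it also goes through, provided the inductive claim is quantified over all pairs $x'\le x$ with $x'\in F$, $x\in G$: definedness of $h^{G\cup\{\star\}}_\alpha(x)$ implies definedness of $h^{F\cup\{\star\}}_\alpha(x')$. The step uses three things:
\begin{itemize}
\item the inequality $h^{F\cup\{\star\}}(x')\le h^{G\cup\{\star\}}(x)$;
\item the induction hypothesis at $\{\alpha\}(x)$;
\item \Cref{lem:h-hierarchy}(3) together with \Cref{lem:basic-arrow}(5), to pass from $\{\alpha\}(x)$ to $\{\alpha\}(x')$ (the case $\{\alpha\}(x')=0$ is trivial).
\end{itemize}

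One slip to fix: your item (ii) is backwards as written. Shifting elements upward can destroy largeness: $\{1,2\}$ is $\omega$-large but $\{2,3\}$ is not. The valid form is for pointwise downward shifts. Since you never use (ii), just delete it.
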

\begin{proof}
(1) and (2) are simply unfolding of the definition.
(3) is by induction on~$k$: The case $k = 1$ is trivial.  Assuming (3) holds for~$k$, by Ketonen and Solovay~\cite[Lemma 4.6]{ketonen1981rapidly}, a set~$X$ is $\omega^n \cdot (k+1)$-large iff there exist two sets~$X_0 < X_1 \subseteq X$ such that $X_0$ is $\omega^n$-large and $X_1$ is $\omega^n \cdot k$-large. Apply the induction hypothesis on~$X_1$.
\end{proof}

\subsection{Construction and deconstruction}

Thanks to \Cref{prop:largeness-below-omegaomega}, any $\omega^{n+1}$-large set~$X$ can be seen as an increasing sequence of 
$\min X$ many $\omega^n$-large blocks. In this section, we prove two important propositions: a \emph{construction} property, which quantifies how many $\omega^n$-large blocks are sufficient to produce an $\omega^{n+m}$-large set, and a \emph{deconstruction} property, which counts the number of $\omega^n$-large blocks which can be extracted from an $\omega^{n+m}$-large set.

The quantification is expressed in terms of $\alpha$-largeness. There are multiple ways to give a meaning to the sentence \qt{the sequence of blocks $X_0 < \dots < X_{d-1}$ is $\alpha$-large}. One could ask that $\{ \max X_i : i < d \}$ is $\alpha$-large, or that any sequence in $X_0 \times \dots \times X_{d-1}$ is $\alpha$-large. Thanks to the regularity of $\alpha$-largeness (\cite[Lemma 2]{ketonen1981rapidly}), the two definitions are equivalent.

\begin{proposition}[Construction]\label[proposition]{prop:regular-construction}
For every $n, m, d \in \NN$ and for every sequence of $\omega^n$-large sets $X_0 < \dots < X_{d-1}$ such that $\{\max X_i : i < d \}$ is $\omega^m$-large, then $X_0 \cup \dots \cup X_{d-1}$ is $\omega^{n+m}$-large.
\end{proposition}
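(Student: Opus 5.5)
The plan is to argue by induction on $m$, for all $n$ and all block sequences simultaneously, using the inductive characterisation of \Cref{prop:largeness-below-omegaomega}. Throughout, $X = X_0 \cup \dots \cup X_{d-1}$ and $M = \{\max X_i : i < d\}$.

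\textbf{Base case $m = 0$.} $M$ is $\omega^0$-large, so it is nonempty. Hence $d \geq 1$, and $X \supseteq X_0$ is $\omega^n$-large, because $\alpha$-largeness is closed under supersets.

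\textbf{Inductive step $m \to m+1$.} Suppose $M$ is $\omega^{m+1}$-large. By \Cref{prop:largeness-below-omegaomega}(2,3), $M \setminus \{\min M\}$ contains $\min M = \max X_0$ many $\omega^m$-large subsets $M_0 < \dots < M_{p-1}$, where $p = \max X_0$. Each $M_j$ is a set of maxima $\{\max X_i : i \in I_j\}$ with $I_j \subseteq \{1, \dots, d-1\}$, and the index sets $I_0 < \dots < I_{p-1}$ are consecutive in order.
\begin{itemize}
\item By the induction hypothesis, each $Y_j = \bigcup_{i \in I_j} X_i$ is $\omega^{n+m}$-large, and $Y_0 < \dots < Y_{p-1}$.
\item It remains to show $X$ is $\omega^{n+m+1}$-large, i.e.\ that $X \setminus \{\min X\}$ is $\omega^{n+m}\cdot \min X$-large. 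This would follow from $\min X$ many $\omega^{n+m}$-large blocks above $\min X$.
\item We have $p = \max X_0 \geq \min X$ blocks $Y_j$, and they all lie above $X_0$, hence above $\min X$.
\item By \Cref{prop:largeness-below-omegaomega}(3), $\bigcup_j Y_j$ is $\omega^{n+m}\cdot p$-large, and therefore $\omega^{n+m}\cdot \min X$-large, since $p \geq \min X$ and there are more blocks.
\item Closure under supersets then gives that $X \setminus \{\min X\}$ is $\omega^{n+m}\cdot\min X$-large, so $X$ is $\omega^{n+m+1}$-large.
\end{itemize}

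\textbf{Main obstacle.} This argument wastes $X_0$ entirely, so no regularity lemma is needed. The delicate points are bookkeeping:
\begin{itemize}
\item The subsets $M_j$ of $M\setminus\{\min M\}$ may skip some indices. This is harmless, because unused blocks are simply discarded via superset closure.
\item We must justify that $\omega^{k}\cdot p$-large implies $\omega^k\cdot q$-large for $q \leq p$. Taking the first $q$ blocks in \Cref{prop:largeness-below-omegaomega}(3) does this.
\item Superset monotonicity of $\alpha$-largeness is assumed as standard. It is used implicitly in the paper and follows from Ketonen--Solovay.
\end{itemize}

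With these points settled, the induction closes.
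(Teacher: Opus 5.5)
Your proof is correct and is essentially the paper's argument: induction on $m$, splitting $M \setminus \{\min M\}$ into $\max X_0$ many $\omega^m$-large pieces, and applying the induction hypothesis to the corresponding unions of blocks. You then conclude, as the paper does, with the characterisation of $\omega^{k+1}$-largeness and $\omega^k \cdot p$-largeness plus superset closure. The only cosmetic difference is that the paper exhibits the subset $\{\max X_0\} \cup Z_0 \cup \dots \cup Z_{\max X_0 - 1}$, which is $\omega^{n+m+1}$-large on the nose, and then invokes upward closure to pass to $X$; you instead put $\min X$ in front directly and use $\max X_0 \geq \min X$ to guarantee enough blocks.
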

\begin{proof}
By induction on~$m$. The case $m = 0$ is immediate, since any $\omega^0$-large set is non-empty, so $d \geq 1$ and $X_0$ is $\omega^n$-large.
Suppose it holds for~$m$. If $Y = \{ \max X_i : i < d \}$ is $\omega^{m+1}$-large, then $Y \setminus \{\min Y\}$ is $\omega^m \cdot \min Y$-large.
By \Cref{prop:largeness-below-omegaomega}, there are $\min Y$ many $\omega^m$-large subsets $Y_0 < \dots < Y_{\min Y - 1}$ of~$Y \setminus \{\min Y\}$.
For every~$i < \min Y$, let $Z_i = \bigcup_{j \in Y_i} X_i$. By induction hypothesis, each $Z_i$ is $\omega^{n+m}$-large, so $\{\min Y\} \cup Z_0 \cup \dots \cup Z_{\min Y-1}$ is $\omega^{n+m+1}$-large. By upward-closure of $\alpha$-largeness, $X_0 \cup \dots \cup X_{d-1}$ is $\omega^{n+m+1}$-large.
\end{proof}

The following deconstruction proposition is an adaptation of Ko{\l}odziejczyk and Yokoyama~\cite[Lemma 2.1]{kolo2020some}.
Note that the hypothesis requires an extra $\omega^n$-large set, while the conclusion also contains some extra $\omega^n$-large set. This might seem sub-optimal but the $\omega^n$-large set of the hypothesis is on the left part of~$X$, while the one from the conclusion is on the right, which is weaker.

\begin{proposition}[Deconstruction]\label[proposition]{prop:regular-deconstruction}
For every $n,m \in \NN$ and for every~$\omega^{n+m}+\omega^n$-large set~$X$, there are some $d \in \NN$ and some $\omega^n$-large subsets $X_0 < \dots < X_d$ of~$X$ such that $\{\max X_i : i < d\}$ is $\omega^m$-large.
\end{proposition}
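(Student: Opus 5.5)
Throughout, $d$ counts only the blocks $X_0,\dots,X_{d-1}$ whose maxima form the $\omega^m$-large set; the last block $X_d$ is extra and plays no role in that set. The plan is a greedy block decomposition. Given an $(\omega^{n+m}+\omega^n)$-large set $X$, cut $X$ greedily from the left into consecutive exactly $\omega^n$-large blocks $X_0 < X_1 < \dots < X_d$. Each $X_{i+1}$ is the shortest initial segment of $X \setminus (X_0 \cup \dots \cup X_i)$ that is $\omega^n$-large, and we stop when the remainder is $\omega^n$-small. Some of $X$ may be left over; discarding it does no harm, since we only need subsets. Set $Y = \{\max X_i : i < d\}$. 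We must show that $Y$ is $\omega^m$-large, for a suitable choice of $d$ (we may also drop trailing blocks).

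The key step is an induction on $m$, with $n$ fixed and $X$ arbitrary. The statement we carry is: for every $(\omega^{n+m}+\omega^n)$-large $X$, the greedy decomposition of $X$ into $\omega^n$-blocks has at least one extra block beyond an $\omega^m$-large set of maxima.

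\textbf{Case $m=0$.} $X$ is $\omega^n\cdot 2$-large. By \Cref{prop:largeness-below-omegaomega}(3), and because the greedy blocks are the earliest possible, there are at least two greedy blocks. So $d\ge 1$, and $\{\max X_0\}$ is $\omega^0$-large.

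\textbf{Step $m\to m+1$.} Let $X$ be $(\omega^{n+m+1}+\omega^n)$-large. By the splitting property (\Cref{thm:splitting-property}, using $\omega^n \ll \omega^{n+m+1}$ so that $+$ coincides with $\oplus$), $X$ contains $X' < X''$ with $X'$ $\omega^{n+m+1}$-large and $X''$ $\omega^n$-large. Let $a=\min X'$. By \Cref{prop:largeness-below-omegaomega}(2),(3), $X'\setminus\{a\}$ splits into $a$ consecutive $\omega^{n+m}$-large pieces $Z_0<\dots<Z_{a-1}$. Regroup these as $Z_0\cup$ (a first $\omega^n$-large part of $Z_1$), and so on. More robustly, use the splitting property on $\omega^{n+m}\cdot a + \omega^n$ to obtain $a$ pieces, each $(\omega^{n+m}+\omega^n)$-large except that the padding $\omega^n$ is borrowed from the next piece, with the last piece borrowing from $X''$. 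Applying the induction hypothesis to each piece gives sets $Y_j$ of block maxima that are $\omega^m$-large. Then $\{\min Y\}\cup Y_0\cup\dots\cup Y_{a-1}$ should be $\omega^{m+1}$-large, because $\min Y \le$ the first maximum, which is at least $\ge a$ — wait, we need $\min Y\ge$ the count, i.e.\ we want the number of $\omega^m$-large groups to be at least $\min Y$. Note $\min Y \ge a$ is the wrong direction in general. So instead we use $Y$'s first element as the leading element: $\max X_0 \geq a$. The deficit between the count $a$ and $\min Y = \max X_0$ has to be compensated.

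The main obstacle is exactly this mismatch: block maxima are larger than the corresponding elements of $X$, so the leading coefficient $\min Y$ exceeds $\min X$. This is where the extra $\omega^n$ on the left is spent. We take $X_0$ to be the block absorbing $a$ together with the first $\omega^n$ portion, and we argue that the remaining structure still supplies enough $\omega^m$-large groups. I would address it as in Ko{\l}odziejczyk--Yokoyama, by not using greedy blocks directly. We simply align the decomposition to the $\omega^{n+m}$ pieces and argue with regularity (\cite[Lemma 2]{ketonen1981rapidly}): replacing elements by larger ones preserves largeness, via the monotonicity of $\{\cdot\}(x)$ in $x$ given by \Cref{lem:basic-arrow}(5) and \Cref{lem:h-hierarchy}(3). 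Concretely, I would show that $\{\max X_i: i<d\}$ dominates pointwise a set of the form $\{\min X'\}\cup Y'$ with $Y'$ $\omega^m\cdot\min X'$-large, shifted by one index, where the shift is absorbed by the leading extra block. The result is that $\{\max X_i\}$ is $\omega^{m+1}$-large, and the last block $X_d$ is left over on the right as the conclusion's extra block.
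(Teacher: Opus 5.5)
\textbf{There is a genuine gap, and it occurs at the first move of your inductive step.} You split $X$ as $X'<X''$ with $X'$ $\omega^{n+m+1}$-large and the spare $\omega^n$-large part $X''$ on the \emph{right}. That is a valid instance of \Cref{thm:splitting-property}, but it throws away exactly what the statement needs, and everything you do afterwards stays inside $X'\cup X''$. Sets of this shape can fail the conclusion. Take $n=1$ and the step $m=0\to 1$. The set $[1,10]$ is $[1,4]<[5,10]$ with $[1,4]$ $\omega^2$-large and $[5,10]$ $\omega$-large (though it is of course not $(\omega^2+\omega)$-large). Yet any $\omega$-large $X_0\subseteq[1,10]$ has $\max X_0\geq 2$. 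So you would need $d\geq 3$, i.e.\ four consecutive $\omega$-large blocks, while $[1,10]$ contains at most two.

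The same defect reappears when you ``borrow the padding from the next piece.'' Since $\{\beta+\alpha\}(x)=\beta+\{\alpha\}(x)$ for $\beta\gg\alpha$, the summand $\omega^n$ of $\omega^{n+m}+\omega^n$ is consumed by the \emph{smallest} elements. So $Z_i$ followed by an $\omega^n$-large piece of $Z_{i+1}$ is in general not $(\omega^{n+m}+\omega^n)$-large, and the induction hypothesis does not apply to it.

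Your closing step also runs the wrong way. Regularity transfers largeness to pointwise \emph{smaller} sets, so showing that $\{\max X_i : i<d\}$ dominates an $\omega^{m+1}$-large set proves nothing. The greedy-optimality claim you carry through the induction is true a posteriori, but you never prove it. It follows from the existence statement rather than giving a route to it.

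\textbf{The missing idea is to keep the padding on the left,} exactly where the ordinal $\omega^{n+m+1}+\omega^n$ puts it.
\begin{itemize}
\item Write $X\supseteq A<B$ with $A$ $\omega^n$-large and $B$ $\omega^{n+m+1}$-large, and use $\max A$ as the leading element of the set of maxima.
\item Since $\max A<\min B$, and $B\setminus\{\min B\}$ contains $\min B$ many $\omega^{n+m}$-large sets $Z_0<\dots<Z_{\min B-1}$, it suffices to produce $\min B-1$ consecutive $\omega^m$-large groups of maxima.
\item To get them, sacrifice $Z_0$ as the initial padding. Then apply the induction hypothesis successively to $P\cup Z_i$ for $i=1,\dots,\min B-1$, where $P$ is the spare last block produced by the previous application (initially $P=Z_0$).
\item The padding $P$ lies to the left of $Z_i$, so each such set is $(\omega^{n+m}+\omega^n)$-large and the hypothesis applies.
\item The resulting $\min B-1\geq\max A$ groups, together with $\max A$, form an $\omega^{m+1}$-large set. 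The final spare block is the extra $X_d$ of the conclusion.
\end{itemize}
This is the paper's argument, and it resolves precisely the leading-coefficient mismatch you correctly identified but left open.
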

\begin{proof}
By induction on~$m$. The case $m = 0$ is immediate by Ketonen and Solovay~\cite[Lemma 4.6]{ketonen1981rapidly}, as $X$ is $\omega^n \cdot 2$-large, so there are two $\omega^n$-large sets $X_0 < X_1 \subseteq X$. Note that $\{\max X_0\}$ is non-empty, hence $\omega^0$-large.

Suppose it holds for~$m$. Let $X$ be $\omega^{n+m+1} + \omega^n$-large. By Ketonen and Solovay~\cite[Lemma 4.6]{ketonen1981rapidly} (or by \Cref{thm:splitting-property}), there are some~$A < B \subseteq X$ such that $A$ is $\omega^n$-large and $B$ is $\omega^{n+m+1}$-large.
By \Cref{prop:largeness-below-omegaomega}, there are $\min B$ many $\omega^{n+m}$-large sets $Z_0 < \dots < Z_{\min B-1} \subseteq B \setminus \{\min B\}$. 

Let $d_{-1} = 0$ and $X^{-1}_{d_{-1}} = Z_0$. Note that $X^{-1}_{d_{-1}}$ is $\omega^{n+m}$-large, hence $\omega^n$-large.
We are going to build inductively for every~$i < \min B-1$ a family of $\omega^n$-large sets $X^i_0 < \dots < X^i_{d_i}$ such that
\begin{itemize}
    \item[(1)] $X^i_0 \cup \dots \cup X^i_{d_i} \subseteq X^{i-1}_{d_{i-1}} \cup Z_i$;
    \item[(2)] $\{ \max X^i_j : j < d_i \}$ is $\omega^m$-large.
\end{itemize}
Assuming $X^{i-1}_{d_{i-1}} < Z_i$ is defined, the set $X^{i-1}_{d_{i-1}} \cup Z_i$ is $\omega^{n+m} + \omega^n$-large, so by induction hypothesis, there is a sequence $X^i_0, \dots, X^i_{d_i}$ of $\omega^n$-large sets satisfying (1) and (2).
It follows that $\{ \max X^i_j : i < \min B-1, j < d_i \}$ is $\omega^m \cdot (\min B-1)$-large. Since $\max A < \min B$, then $\{\max A\} \cup \{ \max X^i_j : i < \min B-1, j < d_i\}$ is $\omega^{m+1}$-large, so altogether, these $\omega^n$-large sets form the desired sequence.
\end{proof}

% \begin{corollary}
% For every $n,m \in \NN$ and for every~$\omega^{n+m} \cdot 2$-large set~$X$ with $\min X \geq 2$, there are some $d \in \NN$ and some $\omega^n$-large subsets $X_0 < \dots < X_{d+m-1}$ of~$X$ such that $\{\max X_i : i < d\}$ is $\omega^m$-large.
% \end{corollary}
% \begin{proof}
% By \Cref{prop:regular-deconstruction}, it suffices to prove that every $\omega^{n+m} \cdot 2$-large set~$X$ with $\min X \geq 2$ is $\omega^{n+m} + \omega^n \cdot m$-large.
% For $m = 0$, any $\omega^n \cdot 2$-large set is $\omega^n$-large.
% For $m = 1$, any $\omega^{n+1} \cdot 2$-large set is $\omega^{n+1} + \omega^n$-large.

% For $m > 1$, if~$X$ is $\omega^{n+m} \cdot 2$-large, then it contains two $\omega^{n+m}$-large subsets~$X_0 < X_1 \subseteq X$.
% Then, $X_0 \setminus \{\min X_0\}$ is $\omega^{n+m-1} \cdot \min X_0$-large, hence $\omega^{n+m-1} \cdot 2$-large since $\min X \geq 2$.
% Let $Y_0 < Y_1 \subseteq X_0$ be two $\omega^{n+m-1}$-large subsets of~$X_0$. The set $Y_1$ is $\omega^{n+m-2} \cdot \min Y_1$-large, and in particular is $\omega^n \cdot m$-large, since $\min Y_1 \geq \max Y_0 \geq m$ and $m \geq 2$. Indeed, a simple induction shows that if $Y_0$ is $\omega^{n+m-1}$-large, then $\max Y_0 \geq m$.

% Since $Y_1 < X_1 \subseteq X$ with $Y_1$ $\omega^n \cdot m$-large and $X_1$ $\omega^{n+m}$-large, then by Ketonen and Solovay~\cite[Lemma 4.6]{ketonen1981rapidly}, $X$ is $\omega^{n+m} + \omega^n \cdot m$-large.
% \end{proof}

\begin{corollary}[Generalized deconstruction]\label[corollary]{cor:regular-deconstruction-improved}
For every~$n, k, \ell \geq 1$ and $m \geq 0$, for every~$\omega^{n+m} \cdot (k\ell+1)$-large set~$X$ such that $k(\ell+1) \leq \min X$,
there are some $d \in \NN$ and some $\omega^n \cdot k$-large subsets $X_0 < \dots < X_{d-1}$ of~$X$ such that $\{\max X_i : i < d\}$ is $\omega^m \cdot \ell$-large.
\end{corollary}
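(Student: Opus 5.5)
The plan is to prove a $k$-blocked version of \Cref{prop:regular-deconstruction} by the same induction on~$m$, and then to iterate it $\ell$ times, chaining through the left-over block as that proof does. The case $m = 0$ is handled directly. By \Cref{prop:largeness-below-omegaomega}(3), $X$ is $\omega^n \cdot (k\ell+1)$-large and contains $k\ell+1$ consecutive $\omega^n$-large sets. Grouping them into $\ell$ consecutive packets of $k$ sets (plus one spare) gives $\ell$ sets $X_0 < \dots < X_{\ell-1}$, each $\omega^n \cdot k$-large. Their maxima form a set of size $\ell$, which is $\omega^0 \cdot \ell$-large. So assume $m \geq 1$.

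\textbf{Auxiliary claim.} For every $m \geq 0$ and every set $Y$ with $\min Y \geq k$ which is $(\omega^{n+m} \cdot k + \omega^n \cdot k)$-large, there are $\omega^n \cdot k$-large sets $Y_0 < \dots < Y_d \subseteq Y$ such that $\{\max Y_j : j < d\}$ is $\omega^m$-large. The last block $Y_d$ is the right-side spare.

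\emph{Base case $m=0$.} \Cref{prop:largeness-below-omegaomega}(3) gives $2k$ consecutive $\omega^n$-large sets, which we merge into two $\omega^n \cdot k$-large sets.

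\emph{Inductive step $m \to m+1$.} By \Cref{thm:splitting-property}, split $Y$ into $A < B$ with $A$ $\omega^n \cdot k$-large and $B$ $\omega^{n+m+1} \cdot k$-large. Write $B \supseteq C_1 < \dots < C_k$ with each $C_i$ $\omega^{n+m+1}$-large. Then $C_i \setminus \{\min C_i\}$ is $\omega^{n+m} \cdot \min C_i$-large, hence contains at least $\min B$ consecutive $\omega^{n+m}$-large sets. Altogether $B$ contains at least $k \cdot \min B$ of them after its minimum. Grouping them $k$ at a time yields $\min B$ consecutive $\omega^{n+m} \cdot k$-large sets $Z_0 < \dots < Z_{\min B - 1}$. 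I expect this to be the main obstacle: the inductive hypothesis only supplies $\omega^{n+m}$-large pieces, and this counting, which uses $\min C_i \geq \min B$, is what lets the $k$-fold packets be recovered one level down. Now run the chaining argument of \Cref{prop:regular-deconstruction}. Start with $Z_0$, which is $\omega^n \cdot k$-large because $m \geq 0$. At stage $i$, the union of the previous spare block and $Z_i$ is $(\omega^n \cdot k + \omega^{n+m} \cdot k)$-large, so it is $(\omega^{n+m}\cdot k + \omega^n \cdot k)$-large by monotonicity of largeness under natural-sum rearrangement (the left spare is the one the hypothesis asks for). Apply the inductive hypothesis to it. Collecting all the maxima gives a set that is $\omega^m \cdot (\min B - 1)$-large. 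Adjoining $\max A < \min B$ and using \Cref{prop:largeness-below-omegaomega}(2) then makes it $\omega^{m+1}$-large.

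\textbf{Deriving the corollary.} Split $X$ into $kl+1$ consecutive $\omega^{n+m}$-large sets. The first, $W$, serves as the left spare. Since $m \geq 1$ and $\min W \geq k$, the set $W \setminus \{\min W\}$ is $\omega^{n+m-1} \cdot k$-large, hence $\omega^n \cdot k$-large. The remaining $k\ell$ sets form $\ell$ packets $P_1 < \dots < P_\ell$, each $\omega^{n+m} \cdot k$-large. Apply the claim $\ell$ times. The $t$-th application uses as its left spare the right spare produced by the $(t-1)$-th application (the first uses $W$), together with $P_t$. Each application yields an $\omega^m$-large set of maxima, and these $\ell$ sets are consecutive, so their union is $\omega^m \cdot \ell$-large by \Cref{prop:largeness-below-omegaomega}(3). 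Discarding the final right spare gives the required $X_0 < \dots < X_{d-1}$.

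I would double-check two points. The first is the exact requirement $\min X \geq k(\ell+1)$, which I would use to guarantee $\min \geq k$ at every stage and to absorb the spare counts. The second is whether the rearrangement of the natural sum needs \Cref{thm:epsilon0-pigeonhole} rather than simple upward closure.
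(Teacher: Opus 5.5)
Your proof is correct, but it is organized differently from the paper's. The paper never proves a $k$-blocked deconstruction. For $m>0$ it first uses $k(\ell+1)\le\min X$ to make $X$ $(\omega^{n+m}\cdot k\ell+\omega^n\cdot k(\ell+1))$-large. It then splits off an $\omega^n\cdot k$-large $X_{-1}$, followed by $k\ell$ pieces that are each $(\omega^{n+m}+\omega^n)$-large. It applies \Cref{prop:regular-deconstruction} (your case $k=1$) to each piece, each with its own spare. This yields $\omega^n$-large blocks $W_0<\dots<W_{p-1}$ whose maxima form an $\omega^m\cdot k\ell$-large set. Next it colors $\max W_i$ by $i \bmod k$ and applies \Cref{prop:pigeonhole-principle-enhanced}(1) to extract an $\omega^m\cdot\ell$-large homogeneous set of maxima. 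Homogeneity forces the selected indices to be at least $k$ apart, so merging the blocks between consecutive selected indices (with $X_{-1}$ prepended to the first) gives $\omega^n\cdot k$-large blocks. You instead redo the induction of \Cref{prop:regular-deconstruction} directly with $k$-blocks. Your count of $k\cdot\min B$ pieces across $C_1<\dots<C_k$ is exactly what makes the inductive step go through. You also recycle the right-hand leftover block as the next left spare, both inside the claim and across the $\ell$ packets.

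The paper's route is shorter because it treats the $k=1$ case as a black box. However, it spends one spare per piece and relies on the pigeonhole principle, hence on \Cref{thm:epsilon0-pigeonhole}. Your route consumes only one initial spare, so it actually needs only $\min X\ge k$ (to make $W\setminus\{\min W\}$ $\omega^n\cdot k$-large). It is also independent of the partition theorem: every split you perform has the form $\beta+\alpha$ with $\beta\gg\alpha$, so Ketonen--Solovay's Lemma 4.6 can replace \Cref{thm:splitting-property}.

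On your second concern: the step you call a natural-sum rearrangement is just the converse of splitting when $\beta\gg\alpha$. Suppose $S<Z$ with $S$ $\alpha$-large and $Z$ $\beta$-large. Evaluating along $S$ gives $\beta+\{\alpha\}(\cdot)$ by \Cref{lem:mesh-evaluation}, so the $\alpha$-part is exhausted on a prefix of $S$. What remains contains $Z$, so $S\cup Z$ is $(\beta+\alpha)$-large by upward closure, and no appeal to \Cref{thm:epsilon0-pigeonhole} is needed. Avoid phrasing this as $(\omega^n\cdot k+\omega^{n+m}\cdot k)$-largeness, since that ordinal equals $\omega^{n+m}\cdot k$ for $m\ge 1$.
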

\begin{proof}
Suppose $m = 0$. Then $X$ is $\omega^n \cdot (k\ell+1)$-large, and a fortiori $\omega^n \cdot k\ell$-large hence there is a sequence $X_0 < \dots < X_{k\ell-1}$ of $\omega^n$-large subsets of~$X$.
For every~$i < \ell$, let $Z_i = X_{ki} \cup \dots \cup X_{k(i+1)-1}$. Each~$Z_i$ is $\omega^n \cdot k$-large, and $\{\max Z_i : i < \ell \}$ is $\ell$-large.

Suppose $m > 0$. In particular, $X$ is $\omega^{n+m} \cdot k\ell + \omega^{n+1}$-large.
Since $k(\ell+1) \leq \min X$, then $X$ is $\omega^{n+m} \cdot k\ell + \omega^n \cdot k(\ell+1)$-large.
By \Cref{thm:splitting-property}, there is a sequence $X_{-1} < X_0 < \cdots < X_{k\ell-1}$ of subsets of~$X$ such that $X_{-1}$ is $\omega^n \cdot k$-large and $X_0, \dots, X_{k\ell-1}$ are $\omega^{n+m}+\omega^n$-large. By \Cref{prop:regular-deconstruction}, for each~$i$ such that $0 \leq i < k\ell$, there is a sequence $X^0_i < \dots < X^{d_i-1}_i$ of $\omega^n$-large subsets of~$X_i$
such that $\{\max X^j_i : j < d_i \}$ is $\omega^m$-large. Let $W_0, \dots, W_{p-1}$ be the sequence $(X^j_i)_{i < k\ell, j < d_i}$.
In particular, the set $Z = \{ \max W_i : i < p \}$ is $\omega^m \cdot k\ell$-large. Let $g : Z \to k$ be such that $g(x)$ is the remainder of the euclidean division of~$i$ by $k$, where $x = \max W_i$.
By \Cref{thm:pigeonhole-v2}, there is an $\omega^m \cdot \ell$-large $g$-homogeneous subset~$H = \{ x_0 < \dots < x_{d-1} \} \subseteq Z$.
For each~$s < d$, let $i_s$ be such that $\max W_{i_s} = x_s$.
Let $H_0 = X_{-1} \cup W_{i_0}$ and $H_{s+1} = \bigcup_{i_s < j \leq i_{s+1}} W_j$.
Note that for each~$s < d$, $\max H_s = x_s$, so $\{\max H_s : s < d \}$ is $\omega^m \cdot \ell$-large.
Since $X_{-1}$ is $\omega^n \cdot k$-large, so is~$H_0$.
By $g$-homogeneity of~$H$, $i_{s+1} \geq i_s + k$, so $H_{s+1}$ is $\omega^n \cdot k$-large.
\end{proof}

\subsection{Sparsity}\label[section]{sec:sparsity}

The notion of $\alpha$-largeness induces a dual notion of $\alpha$-sparsity as follows:

\begin{definition}
A set $X \subseteq \NN$ is \emph{$\alpha$-sparse} if for every~$x, y \in X$ such that $x < y$,
$(x, y]_\NN$ is $\alpha$-large. 
\end{definition}

\begin{theorem}[Ketonen and Solovay~\cite{ketonen1981rapidly}]\label[theorem]{thm:large-dominates-pr}
For every primitive recursive function $g : \NN \to \NN$, there is some~$n \in \NN$
such that for every~$\omega^n$-large set~$F$, $\max F > g(\min F)$.
\end{theorem}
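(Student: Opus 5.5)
We plan to define a fast-growing hierarchy that is primitive recursive at each finite level, and then show that $\omega^n$-largeness forces $\max F$ to exceed the $(n-1)$-th level applied to $\min F$. Let $f_0(x) = x+1$ and $f_{k+1}(x) = f_k^{(x)}(x)$; this is the finite part of the Wainer hierarchy. The classical Grzegorczyk/Wainer fact is that every primitive recursive $g$ is eventually dominated by some $f_k$. More precisely, there is $k$ with $g(x) < f_k(x)$ for all $x \geq 1$ (or all $x$, after a harmless shift). We take this domination fact as the standard input. The plan is then to prove, by induction on $n$, that every $\omega^{n+1}$-large set $F$ with $\min F \geq 1$ satisfies $\max F \geq f_n(\min F)$. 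Finally we choose $n = k+2$ to absorb edge cases such as $\min F = 0$ and the strictness of the inequality.

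The inductive step uses \Cref{prop:largeness-below-omegaomega}. For the base case, an $\omega$-large set has $\card F > \min F$, so $\max F \geq 2\min F \geq \min F + 1 = f_0(\min F)$ when $\min F \geq 1$; it is in fact at least $\min F + \min F$. For the step, suppose $F$ is $\omega^{n+2}$-large and put $x = \min F$. By (2) and (3) of \Cref{prop:largeness-below-omegaomega}, $F \setminus \{x\}$ contains $\omega^{n+1}$-large blocks $F_0 < \dots < F_{x-1}$. By the induction hypothesis and monotonicity of $f_n$, we get $\min F_{i+1} > \max F_i \geq f_n(\min F_i)$. Since $\min F_0 > x$, iterating gives $\max F_{x-1} \geq f_n^{(x)}(x) = f_{n+1}(x)$. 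This argument uses that each $f_n$ is increasing and satisfies $f_n(y) > y$, both of which follow by a routine induction.

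To conclude, given primitive recursive $g$, we pick $k$ with $g(x) < f_k(x)$ for all $x \geq 1$ and set $n = k+1$. If $F$ is $\omega^n$-large and $\min F \geq 1$, then $\max F \geq f_k(\min F) > g(\min F)$. The case $\min F = 0$ needs care. A set with minimum $0$ can be small: $\{0\}$ is $\omega^n$-large for every $n \geq 1$, since $\{\omega^{n}\}(0) = 0$. This makes the statement literally false at $0$ unless $g(0) < 0$. So I will read the theorem as Ketonen and Solovay do, for sets with $\min F \geq 1$, or equivalently with an exception at $0$. Handling this convention, rather than the estimate, is the main point to settle. The substantive ingredient is the domination of primitive recursive functions by the $f_k$, which is quoted rather than reproved. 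If a self-contained argument is wanted, it goes by induction on primitive recursive definitions. Composition is absorbed by moving up one level, using $f_k \circ f_k \leq f_{k+1}$ for large arguments. Primitive recursion is absorbed by iteration: a function defined by recursion from a step function bounded by $f_k$ is bounded by $f_{k+1}$, up to a constant shift in level.
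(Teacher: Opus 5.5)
Your inductive bound is correct: every $\omega^{n+1}$-large $F$ with $\min F \geq 1$ satisfies $\max F \geq f_n(\min F)$. Your remark about $\min F = 0$ is also right. Since $\{\omega^n\}(0) = 0$ for $n \geq 1$ and every singleton is $\omega^0$-large, $F = \{0\}$ refutes the statement as printed for every $n$. So the statement must be read for $F \subseteq \NN \setminus \{0\}$. The paper gives no proof of this statement, only the citation to Ketonen and Solovay. Your route — compare $\omega^n$-largeness with the finite Wainer hierarchy via \Cref{prop:largeness-below-omegaomega}, then invoke domination of primitive recursive functions — is the standard one.

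The gap is the domination input, which is false as you state it for your hierarchy. Since $f_{k+1}(1) = f_k^{(1)}(1) = f_k(1)$, you have $f_k(1) = 2$ for every $k$, and $f_k(0) = 0$ for $k \geq 1$. So for the constant function $g \equiv 2$ there is no $k$ with $g(x) < f_k(x)$ for all $x \geq 1$, let alone for all $x$. Your final inference $\max F \geq f_k(\min F) > g(\min F)$ therefore fails when $\min F = 1$.

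What the Grzegorczyk/L\"ob--Wainer argument actually gives is eventual domination: there are $k$ and $N$ with $g(x) < f_k(x)$ for all $x \geq N$. Your own sketch, with its ``for large arguments'', yields only this. You then need a separate argument for $1 \leq \min F < N$. An easy one: by the same induction via \Cref{prop:largeness-below-omegaomega}, an $\omega^n$-large set with positive minimum has at least $n+1$ elements, so $\max F \geq \min F + n$. Take $n \geq \max(k+1,\, 1 + \max_{x < N} g(x))$. Then the case $\min F \geq N$ follows from $f_{n-1}(x) \geq f_k(x)$ for $x \geq 1$, and the case $1 \leq \min F < N$ from $\max F \geq \min F + n$.

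Your announced shift to $n = k+2$ does not repair this by itself. It only reduces matters to domination on all $x \geq 2$, which you would still have to justify.

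Two minor points. You announce $n = k+2$ but then use $n = k+1$. And $f_n(y) > y$ fails at $y = 0$ for $n \geq 1$; this is harmless, since you only need it, and really only monotonicity, for $y \geq 1$.
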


In particular, $y > 2x$ whenever $(x, y]$ is $\omega$-large and $y > x 2^x$ whenever $(x, y]$ is $\omega^2$-large.
We shall be particularly interested in $\omega^2 \cdot 3$-sparse sets as they provide sufficient sparsity to bound Ramsey numbers.

Let $R_k(d)$ be the least number~$R$ such that for every coloring $f : [R]^2 \to k$, there is an $f$-homogeneous set of size~$d$.
The standard upper bound for the finite Ramsey's theorem for pairs and two colors, obtained by Erd\H{o}s and Szekeres in \cite{erdos1935combinatorial}, is that $R_2(d) \leq \binom{2d-1}{d-1} \leq 4^{d}$. The upper bound generalizes to $R_k(d) \leq k^{kd}$ (see Graham, Rothschild and Spencer~\cite[Section 1.1]{graham2013ramsey}).

\begin{lemma}\label[lemma]{lem:omega3-3-sparsity}
If $X$ is $\omega^2 \cdot 3$-sparse and $\min X \geq 7$, then it is $(x \mapsto x^{R_x(2x+2)})$-sparse.
\end{lemma}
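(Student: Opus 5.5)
We must show: for $x<y$ in $X$, $y > x^{R_x(2x+2)}$. Since $(x,y]$ is $\omega^2\cdot 3$-large, it is enough to show that every $\omega^2\cdot 3$-large set $F$ with $\min F > x \ge 7$ has $\max F > x^{R_x(2x+2)}$. The plan has three steps: get an explicit growth rate for $\omega$- and $\omega^2$-large sets, compose it three times using \Cref{prop:largeness-below-omegaomega}(3), and compare the resulting tower with $x^{R_x(2x+2)}$ using the bound $R_k(d)\le k^{kd}$.

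\textbf{Step 1: explicit growth.} If $F$ is $\omega$-large with $\min F = a$, then $\card F \ge a+1$, so $\max F \ge 2a$. If $F$ is $\omega^2$-large with $\min F = a$, then by \Cref{prop:largeness-below-omegaomega}(2,3), $F\setminus\{a\}$ contains $a$ consecutive $\omega$-large blocks $F_0<\dots<F_{a-1}$. Each block at least doubles its starting point, and $\min F_0 \ge a+1$, so
\[
\max F \;\ge\; (a+1)\,2^{a} \;>\; a\,2^{a}.
\]
Call this map $g(a)=a2^a$; it is increasing.

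\textbf{Step 2: three blocks.} An $\omega^2\cdot 3$-large set $F$ splits into three $\omega^2$-large blocks $F_0<F_1<F_2$. Writing $a=\min F > x$, we have $\min F_{i+1} > \max F_i$. Iterating Step 1 then gives
\[
\max F \;>\; g(g(g(a))) \;\ge\; g(g(g(x+1))) .
\]
Already $g(g(x)) \ge 2^{x2^x}$, so $g^{(3)}(x) \ge 2^{2^{x2^x}}$.

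\textbf{Step 3: comparison.} By $R_k(d)\le k^{kd}$, we have $R_x(2x+2)\le x^{x(2x+2)}$. Therefore
\[
x^{R_x(2x+2)} \;\le\; 2^{\log_2 x \cdot x^{2x^2+2x}} \;=\; 2^{2^{\log_2\log_2 x + (2x^2+2x)\log_2 x}} .
\]
It suffices to prove $\log_2\log_2 x + (2x^2+2x)\log_2 x < x2^x$ for $x\ge 7$. At $x=7$ the left side is about $2+112\cdot 2.81 \approx 317$, while the right side is $896$. The right side at least doubles when $x$ increases by one, while the left side is polynomial in $x$, so a short induction extends the inequality to all $x \ge 7$.

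I expect the main obstacle to be the exact threshold arithmetic: checking that $\min X\ge 7$ really suffices, and handling off-by-one issues (such as $(x,y]$ starting at $x+1$) correctly. If the crude bound from Step 1 turns out too weak at small $x$, I would fall back on the sharper bound $(a+1)2^a$, or compute the first block exactly.
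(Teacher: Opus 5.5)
Your proof is correct and follows the same route as the paper: the $x \mapsto x2^x$ growth of $\omega^2$-large intervals is iterated over the three $\omega^2$-large blocks, and the result is compared with $x^{x^{x(2x+2)}}$ via $R_k(d) \le k^{kd}$. The only difference is how you check the final elementary inequality: the paper uses $x(2x+2) \le 2^x$ for $x \ge 7$, while you take logarithms and verify $x = 7$ plus a doubling induction, for which you should state that the left side at most doubles per step (its main term grows by the factor $\frac{x+2}{x}\cdot\frac{\log_2(x+1)}{\log_2 x} < 2$).
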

\begin{proof}
Any $\omega^2$-sparse set is $(x \mapsto x 2^x)$-sparse, so any $\omega^2 \cdot 3$-sparse set is $(x \mapsto x 2^{x + x 2^x +x 2^{x+x 2^x}})$-sparse, and in particular $(x \mapsto 2^{x 2^{x 2^x}})$-sparse. Whenever $x \geq 7$, then $x(2x+2) \leq 2^x$, so $2^{x 2^{x 2^x}} \geq x^{2^{x^2(2x+2)}} \geq x^{x^{x(2x+2)}}$. Thus $Y$ is $(x \mapsto x^{x^{x(2x+2)}})$-sparse.

Since $R_{x}(2x+2) \leq x^{x(2x+2)}$ (see Graham, Rothschild and Spencer~\cite[Section 1.1]{graham2013ramsey}), $x^{R_x(2x + 2)} \leq x^{x^{x(2x+2)}}$, so $X$ is $(x \mapsto x^{R_x(2x + 2)})$-sparse.
\end{proof}

Thanks to the deconstruction proposition \Cref{cor:regular-deconstruction-improved}, one can relate largeness to sparsity.
The following lemma specializes this relation in the case of $\omega^2 \cdot 3$-sparsity.

\begin{lemma}\label[lemma]{lem:sparsity-simplified}
For every $\ell \geq 1$ and $m \geq 0$, for every $\omega^{m+3}$-large set~$X$
such that $3\ell+2 \leq \min X$, there is some $\omega^2 \cdot 3$-sparse $\omega^m \cdot \ell+1$-large subset of~$X$.
\end{lemma}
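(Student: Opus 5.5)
Set $n = 2$, $k = 3$ in \Cref{cor:regular-deconstruction-improved}. Then the target is a sequence of $\omega^2\cdot 3$-large blocks $X_0 < \dots < X_{d-1}$ whose maxima form an $\omega^m\cdot\ell$-large set. The hypotheses of the corollary require $X$ to be $\omega^{m+2}\cdot(3\ell+1)$-large and $3(\ell+1) \leq \min X$. We only have $3\ell+2 \leq \min X$, so the plan is to drop $\min X$ first and apply the corollary to $X' = X\setminus\{\min X\}$. By \Cref{prop:largeness-below-omegaomega}(2), $X'$ is $\omega^{m+2}\cdot \min X$-large, hence $\omega^{m+2}\cdot(3\ell+1)$-large, since $\min X \geq 3\ell+2 > 3\ell+1$. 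Moreover $\min X' \geq \min X + 1 \geq 3\ell+3 = 3(\ell+1)$. So the corollary applies to $X'$ with $n=2$, $k=3$, and the given $\ell$ and $m$.

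Let $X_0 < \dots < X_{d-1} \subseteq X'$ be the resulting blocks, and let $Y = \{\max X_i : i < d\}$, which is $\omega^m\cdot\ell$-large. Put $H = \{\min X\} \cup Y$. For sparsity, take consecutive elements of $H$:
\begin{itemize}
\item For $\min X < \max X_0$, the interval $(\min X, \max X_0]$ contains $X_0$, which is $\omega^2\cdot 3$-large.
\item For $\max X_i < \max X_{i+1}$, the interval $(\max X_i, \max X_{i+1}]$ contains $X_{i+1}$, which is $\omega^2\cdot 3$-large.
\end{itemize}
Non-consecutive pairs follow from these cases by upward closure of largeness. Hence $H$ is $\omega^2\cdot 3$-sparse.

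It remains to show $H$ is $(\omega^m\cdot\ell+1)$-large. Unfolding the definition one step, since $\{\omega^m\cdot\ell+1\}(x) = \omega^m\cdot\ell$ for any $x$, a set is $(\omega^m\cdot\ell+1)$-large exactly when it is nonempty and the set without its minimum is $\omega^m\cdot\ell$-large. For $H$, removing the minimum $\min X$ leaves $Y$, which is $\omega^m\cdot\ell$-large. So $H$ is $(\omega^m\cdot\ell+1)$-large.

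The only delicate step is the off-by-one bookkeeping. The expected obstacle is reconciling the hypothesis $3\ell+2 \leq \min X$ with the corollary's requirement $3(\ell+1) \leq \min X$. This is why we pass to $X'$ and use the discarded minimum as the extra "$+1$" point. The case $m=0$ is covered by the corollary as well, so no separate treatment is needed.
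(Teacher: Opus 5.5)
Your proof is correct and follows the paper's route: apply \Cref{cor:regular-deconstruction-improved} with $n=2$, $k=3$, then add $\min X$ to the set of block maxima. The only difference is that the paper first splits $X\setminus\{\min X\}$ into an $\omega^{m+2}$-large block $A$ followed by an $\omega^{m+2}\cdot(3\ell+1)$-large part $B$, and applies the corollary to $B$. As you observe, this split is unnecessary, because $\min(X\setminus\{\min X\}) \geq 3\ell+3$ already gives the corollary's hypothesis.
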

\begin{proof}
Since $3\ell+2 \leq \min X$, then $X \setminus \{\min X\}$ is $\omega^{m+2} \cdot (3\ell+2)$-large, hence $\omega^{m+2} \cdot (3\ell+1) + \omega^{m+2}$-large. Then there are two subsets $A < B \subseteq X$ such that $A$ is $\omega^{m+2}$-large and $B$ is $\omega^{m+2} \cdot (3\ell+1)$-large.
Since $3\ell+2 \leq \min A$, then $3(\ell+1) \leq \max B$, so by 
\Cref{cor:regular-deconstruction-improved} applied to~$B$, there is a sequence  $X_0 < \dots < X_{d-1}$ of $\omega^2 \cdot 3$-large subsets of~$B$ such that $Z = \{ \max X_i : i < d \}$ is $\omega^m \cdot \ell$-large.
The set $\{\min X\} \cup Z$ is $\omega^m \cdot \ell+1$-large, and $\omega^2 \cdot 3$-sparse.
% \smallskip

% \textbf{Claim.} \emph{If $A$ is $\omega^m$-large and $\min A \geq x$, then $\max A \geq xm$.}
% If $x = 0$ or $x = 1$, then this is trivial. So suppose $x \geq 2$.
% By induction on~$m$. If $m = 0$, this is trivial as $\max A \geq 0$.
% Suppose it holds for~$m$. Suppose $A$ is $\omega^{m+1}$-large. The set $A' = A \setminus \{\min A\}$ is $\omega^m \cdot x$-large.
% There are $x$ $\omega^m$ large subsets $A_0  < \dots < A_{x-1} \subseteq A'$. The set $A_0$ satisfies $\min A_0 \geq (x+1)$,
% so by induction hypothesis, $\max A_0 \geq (x+1)m$. In particular, $\min A_1 \geq (x+1)m$, so $\max A_1 \geq (x+1)m^2$, and so on.
% So $\max A_{x-1} \geq (x+1)m^x$. Since $x \geq 2$, $(x+1)m^x \geq (x+1)m^2 \geq x(m+1)$, so $\max A \geq x(m+1)$. This completes the proof of the claim.
% \smallskip

% By the claim, since $A$ is $\omega^{m+2}$-large and $3\ell+2 \leq \min A$, then $\max A \geq (3\ell+2)(m+2) \geq 3(m\ell+1)$. In particular, $\min B \geq 3(m\ell+1)$, so by \Cref{cor:regular-deconstruction-improved} applied to~$B$, there is a sequence  $X_0 < \dots < X_{d-1}$ of $\omega^2 \cdot 3$-large subsets of~$B$ such that $Z = \{ \max X_i : i < d \}$ is $\omega^m \cdot \ell$-large.
% The set $\{\min X\} \cup Z$ is $\omega^m \cdot \ell+1$-large, and $\omega^2 \cdot 3$-sparse.
\end{proof}

\subsection{Largeness for combinatorial theorems}

We are particularly interested in the closure of notions of largeness under combinatorial statements coming from Ramsey theory.
Given a coloring $f : [\NN]^n \to k$, a set $H \subseteq \NN$ is \emph{$f$-homogeneous} if $f$ is constant on $[H]^n$.

\begin{statement}[Ramsey's theorem]
Given $n, k \in \NN$, $\RT^n_k$ is the statement \qt{For every coloring $f : [\NN]^n \to k$, there is an infinite $f$-homogeneous set}.
\end{statement}

Ramsey's theorem for pairs is a central subject of study in reverse mathematics, a foundational program whose goal is to find optimal axioms to prove ordinary theorems~\cite{dzhafarov2022reverse,simpson2009subsystems}. Its first-order consequences are closely related to the computation of bounds for its largeness counterpart:

\begin{definition}
A set~$X \finsub \NN$ is $\RT^n_k$-$\alpha$-large if for every coloring~$f : [X]^n \to k$, there is an $\alpha$-large $f$-homogeneous subset~$H \subseteq X$.
\end{definition}

Whenever a parameter is omitted, it is taken to be $\min X$. For instance, a set~$X$ is $\RT^n$-$\alpha$-large if it is $\RT^n_{\min X}$-$\alpha$-large,
in other words, if for every coloring $f : [X]^n \to \min X$, there is an $\alpha$-large $f$-homogeneous subset~$H \subseteq X$.
Bounds largeness for Ramsey's theorem for pairs were extensively studied. Ketonen and Solovay~\cite{ketonen1981rapidly} proved that every $\omega^{k+4}$-large set is $\RT^2_k$-$\omega$-large when $k \geq 2$. Patey and Yokoyama~\cite{patey2018proof} proved that for every~$k \in \NN$, there is some~$n \in \NN$ such that every $\omega^n$-large set is $\RT^2_2$-$\omega^n$-large. Ko{\l}odziejczyk and Yokoyama~\cite{kolo2020some} computed some explicit bounds and proved that every $\omega^{144(n+1)}$-large set is $\RT^2_2$-$\omega^n$-large. Our goal is to give a tight bound to $\RT^2_k$-$\omega^n$-largeness, up to an additive constant.

In order to better understand its computational and proof-theoretic strength, Bovykin and Weiermann~\cite{bovykin2017strength} decomposed Ramsey's theorem for pairs and two colors ($\RT^2_2$) into two statements: the Erd\H{o}s-Moser theorem ($\EM$) and the Ascending Descending Sequence ($\ADS$). 
The Erd\H{o}s-Moser theorem~\cite{erdos1964representation} is a statement from graph theory about tournaments, that is, complete oriented graphs, which says that any infinite tournaments admits an infinite transitive sub-tournament. The Ascending Descending Sequence principle~\cite{hirschfeldt2007combinatorial} is a theorem from order theory, stating that every infinite linear order admits an infinite ascending or descending sequence. Both statements can be formulated as particular cases of Ramsey's theorem for pairs, thanks to the notion of transitivity.

\begin{definition}\label[definition]{def:transitivity}
Given a coloring $f : [\NN]^2 \to k$, a set~$H \subseteq \NN$ is \emph{$f$-transitive} if for every
$x < y < z \in H$ such that $f(x, y) = f(y, z)$, then $f(x, y) = (x, z)$. Whenever~$H = \NN$, we simply say that the coloring $f$ is transitive.
\end{definition}

Tournaments are in one-to-one correspondence with 2-colorings of pairs, and transitive tournaments with transitive colorings.
Similarly, linear orders can be coded as transitive 2-colorings of pairs, in which case ascending or descending sequences correspond to homogeneous sets.
Therefore, given any instance of Ramsey's theorem for pairs and two colors, one can apply $\EM$ to obtain an infinite transitive subset, and then $\ADS$ to obtain a homogeneous subset. Both statements were extensively studied in computability theory and reverse mathematics~\cite{bovykin2017strength,lerman2013separating,hirschfeldt2007combinatorial,lerman1982recursive}.

When trying to extend the decomposition of $\RT^2_2$ into $\EM$ and $\ADS$ to more colors, there are two natural generalizations of the notion of transitivity: a weak one, given in \Cref{def:transitivity}, and a stronger one, formally introduced by Towsner and Yokoyama~\cite{towsner2024erdos}, but already used in Patey~\cite[Theorem 8.2.1]{patey2016reverse}.

\begin{definition}
Given a coloring $f : [\NN]^2 \to k$, a set $H \subseteq \NN$ is \emph{$f$-fallow} if for every~$x < y < z \in H$, $f(x, z) \in \{f(x, y), f(y, z)\}$.
\end{definition}

Clearly, any $f$-fallow set is $f$-transitive, but the converse is not true. Both $\EM$ and $\ADS$ therefore admit two generalized statements, based on transitivity and fallowness. We shall only consider the stronger version of each statement, that is, the fallow version of the Erd\H{o}s-Moser theorem and transitive Ramsey's theorem for pairs.

\begin{statement}[Fallow Erd\H{o}s-Moser theorem]
Given $k \in \NN$, $\fEM_k$ is the statement \qt{For every coloring $f : [\NN]^2 \to k$, there is an infinite $f$-fallow set}.
\end{statement}

\begin{statement}[Transitive Ramsey's theorem for pairs]
Given $k \in \NN$, $\trRT^2_k$ is the statement \qt{For every transitive coloring $f : [\NN]^2 \to k$, there is an infinite $f$-homogeneous set}.
\end{statement}

The statements $\fEM_k$ and $\trRT^2_k$ admit largeness counterparts, as Ramsey's theorem. In the remainder of this article, we shall compute bounds for $\fEM$-$\omega^n$-largeness and $\trRT^2_k$-$\omega^n$-largeness, and deduce bounds for $\RT^2_k$-$\omega^n$-largeness.

\subsection{Pigeonhole principle}

We already studied $\alpha$-large versions of the pigeonhole principle for $\alpha < \epsilon_0$ in \Cref{sec:largeness-epsilon0}.
The following proposition is an immediate consequence of \Cref{thm:pigeonhole-v2}, restricted to the setting of $\omega^n$-largeness.
It is however useful to state it under this form as it will be used all over the remainder of the article. This strengthens Ko{\l}odziejczyk and Yokoyama~\cite[Lemma 2.2]{kolo2020some} by improving the bounds and removing the sparsity assumptions.

\begin{proposition}\label[proposition]{prop:pigeonhole-principle-enhanced}
Fix~$n \geq 0$ and $a, k \geq 1$.
\begin{itemize}
    \item[(1)] If $X$ is $\omega^n \cdot ak$-large, then it is $\RT^1_k$-$\omega^n \cdot a$-large.
    \item[(2)] If $X$ is $\omega^{n+1}$-large, then it is $\RT^1$-$\omega^n$-large.
\end{itemize}
\end{proposition}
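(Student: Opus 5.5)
Both items will follow from \Cref{thm:pigeonhole-v2}, iterated over the colors. I will first prove a $k$-fold version of that theorem by induction on $k$, and then specialize the ordinals.

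\textbf{The $k$-fold pigeonhole.} The claim is: if $X = X_0 \cup \dots \cup X_{k-1} \cup \{\star\}$ with $\star > X_0 \cup \dots \cup X_{k-1}$ is $(\alpha_0 \oplus \dots \oplus \alpha_{k-1})$-large, then some $X_i$ is $\alpha_i$-large. The case $k=1$ is trivial. For the inductive step, I group the colors as $X_0 \cup (X_1 \cup \dots \cup X_{k-1})$ and apply \Cref{thm:pigeonhole-v2} with $\alpha = \alpha_0$ and $\beta = \alpha_1 \oplus \dots \oplus \alpha_{k-1}$. Either $X_0$ is $\alpha_0$-large, or $Y = X_1 \cup \dots \cup X_{k-1}$ is $\beta$-large.

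The second alternative does not directly fit the induction hypothesis, which needs an extra point on top. This is the one delicate step. To handle it, I add a fresh point $\star' = \max X + 1$ on the left side rather than using the theorem directly. More precisely, I apply \Cref{thm:epsilon0-pigeonhole} in contrapositive form to
$$X \cup \{\star'\} = (X_0 \cup \{\star\}) \cup (Y \cup \{\star'\}).$$
This gives: either $X_0 \cup \{\star\}$ is not at most $\alpha_0$-large, so $X_0$ is $\alpha_0$-large; or $Y \cup \{\star'\}$ is not at most $\beta$-large, which means $Y$ is $\beta$-large. But this is still the same situation, so the trick alone does not help.

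The cleaner route is to iterate \Cref{thm:epsilon0-pigeonhole} directly. Suppose every $X_i$ is $\alpha_i$-small. Then each $X_i \cup \{\star\}$ is at most $\alpha_i$-large, since removing its max $\star$ leaves $X_i$, which is $\alpha_i$-small. Unions of sets that are at most large, taken with the $\oplus$-sum of the ordinals, remain at most large; this is by induction on the number of sets using \Cref{thm:epsilon0-pigeonhole}, and it requires all sets to lie in $\NN\setminus\{0\}$. Hence $X = \bigcup_i (X_i \cup \{\star\})$ is at most $(\alpha_0 \oplus \dots \oplus \alpha_{k-1})$-large. That is, $X \setminus \{\star\}$ is small, while $X$ itself is large. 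These are consistent, so there is no contradiction yet.

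To get one, I add $\star' = \star + 1$ and use the sets $X_0 \cup \{\star\}$ and $X_i \cup \{\star'\}$ for $i \geq 1$. Each is at most $\alpha_i$-large, so their union $X \cup \{\star'\}$ is at most $(\bigoplus \alpha_i)$-large. This means $X$ is small, which is a contradiction. This $k$-set version, with the extra point distributed among the sets, is the heart of the argument.

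\textbf{Item (1).} Given $f : X \to k$ with $X$ $\omega^n \cdot ak$-large, let $X_i = f^{-1}(i) \setminus \{\max X\}$ and $\star = \max X$. Then $\bigoplus_{i<k} \omega^n \cdot a = \omega^n \cdot ak$, so some $X_i$ is $\omega^n \cdot a$-large, and it is $f$-homogeneous.

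For the possible $0$ element, note that every color class lies in $X$. If $0 \in X$, the hypothesis $B, C \subseteq \NN \setminus \{0\}$ is an obstacle. I expect to handle it by observing that $\{0\} \cup Y$ is $\alpha$-large iff $Y$ is $\{\alpha\}(0)$-large. For $\alpha = \omega^n \cdot ak$ with $n \geq 1$, the set $X$ is $\omega^n \cdot ak$-large with $0 \in X$ only if $X \setminus \{0\}$ is $\omega^n\cdot(ak-1)$-large, which suffices only with care. So I may instead need to treat $\min X = 0$ separately. This is the step I expect to be fiddly.

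\textbf{Item (2).} If $X$ is $\omega^{n+1}$-large, then $X \setminus \{\min X\}$ is $\omega^n \cdot \min X$-large by \Cref{prop:largeness-below-omegaomega}(2). Apply (1) with $a = 1$ and $k = \min X$ to a coloring $f : X \to \min X$ restricted to $X \setminus \{\min X\}$. When $\min X = 0$ there are no colorings $X \to 0$ unless $X$ is empty, so that case is vacuous.
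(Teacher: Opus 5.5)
Your final argument is correct for $X \subseteq \NN \setminus \{0\}$, which is the setting the paper's proof tacitly assumes, but it is organized differently from the paper.

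\textbf{Comparison with the paper.} The paper never states a $k$-part pigeonhole. It inducts on $k$ in statement (1) itself: it splits $X$ into $\{x : f(x) < k\}$ and $\{x : f(x) = k\}$ and applies \Cref{thm:pigeonhole-v2} with $\alpha = \omega^n \cdot ak$ and $\beta = \omega^n \cdot a$, implicitly taking $\max X$ as the top point and using upward closure. The induction hypothesis then applies to the first class as an arbitrary $\omega^n \cdot ak$-large set, so the \qt{extra point} problem you ran into never arises.

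You instead re-run the proof of \Cref{thm:pigeonhole-v2} with $k$ parts and two fresh top points. This gives a $k$-part pigeonhole needing only a single top point, which is slightly more than is needed here. The paper's route is shorter because it reuses \Cref{thm:pigeonhole-v2} directly. Your abandoned first induction also works: if $Y$ is $\beta$-large, take $\max Y$ as its top point, remove it from its class, and restore it by upward closure.

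\textbf{Things to fix.}

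First, your $k$-part claim is false for $k = 1$. Take $X_0 = \{1\}$, $\star = 2$, $\alpha_0 = 2$: then $X$ is $2$-large but $X_0$ is $2$-small, so the base case is not trivial. Your direct argument needs $k \geq 2$ so that $\star'$ actually enters the union. Item (1) with $k = 1$ must be handled separately, though it is trivial.

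Second, for $k \geq 3$ the sets $X_i \cup \{\star'\}$ overlap at $\star'$, whereas the proof of \Cref{thm:epsilon0-pigeonhole} treats $B, C$ as a partition. Use $X_0 \cup \{\star\}$, $X_1 \cup \{\star'\}$, and the bare $X_i$ for $i \geq 2$ instead. Each bare $X_i$ is at most $\alpha_i$-large, since smallness is closed under subsets. The union is still $X \cup \{\star'\}$.

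Third, the case $0 \in X$, which you leave open and the paper ignores, closes along the line you started.
\begin{itemize}
\item[(i)] For $n \geq 1$, $X \setminus \{0\}$ is $\omega^n \cdot (ak-1)$-large, and $\omega^n \cdot (ak-1) = \omega^n \cdot (a-1) \oplus \omega^n \cdot a(k-1)$.
\item[(ii)] Apply your $k$-part pigeonhole to $X \setminus \{0\}$, giving the color $c = f(0)$ the budget $\omega^n \cdot (a-1)$ and every other color the budget $\omega^n \cdot a$.
\item[(iii)] If color $c$ wins, adding $0$ back yields an $\omega^n \cdot a$-large homogeneous set, because $\{\omega^n \cdot a\}(0) = \omega^n \cdot (a-1)$.
\item[(iv)] For $n = 0$, largeness is just cardinality, so nothing extra is needed.
\end{itemize}
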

\begin{proof}
(1) By induction on~$k$. The case~$k = 1$ is trivial. Suppose it holds for~$k$. Let $X$ be $\omega^n \cdot a(k+1)$-large, and let $f : X \to k+1$. Let $Y_0 = \{ x \in X : f(x) < k \}$ and $Y_1 = \{ x \in X : f(x) = k \}$. By \Cref{thm:pigeonhole-v2}, either $Y_0$ is $\omega^n \cdot ak$-large, in which case by induction hypothesis, there is an $f$-homogeneous set which is $\omega^n \cdot a$-large,
or $Y_1$ is $\omega^n \cdot a$-large, and is $f$-homogeneous for color~$k$ by definition.

(2) Let $f : X \to \min X$. Since $X$ is $\omega^{n+1}$-large, $X \setminus \{\min X\}$ is $\omega^n \cdot \min X$-large.
By (1), $X \setminus \{\min X\}$ is $\RT^1_{\min X}$-$\omega^n$-large, so there is an $\omega^n$-large $f$-homogeneous subset.
\end{proof}

\subsection{Grouping principle}

%The upper bound generalizes to $R_k(d) \leq k^{(d-1)k+1}-1$ (see Graham, Rothschild and Spencer~\cite[Section 1.1]{graham2013ramsey}).

The inductive proofs of Ko{\l}odziejczyk and Yokoyama for largeness for~$\EM$ heavily depended on the notion of grouping. We shall only use the following lemma, which, in the terminology of grouping, means that every $\omega^n \cdot R_2(2d)$-large $(x \mapsto x2^x)$-sparse set admits an $(\omega^n+1, d)$-grouping. This is an improvement of their bound, which used an $\omega^{n+3}$-large set \cite[Lemma 2.6]{kolo2020some}. 

\begin{lemma}\label[lemma]{lem:new-stabilize-bounds-optimal}
    Let $k > 0$ and let $X \subseteq_{fin} \NN$ be $\omega^{n} \cdot R_k(2d)$-large and $(x \mapsto xk^x)$-sparse, for some $d \in \NN$ such that $k^{R_k(2d)} \leq \min X$ and let $f : [X]^2 \to k$ be a coloring. 
    Then, there exists a sequence $Y_0 < \dots < Y_{d-1}$ of $\omega^n+1$-large subsets of $X$ and some color $c < k$ such that $f(x,y) = c$ for every $x \in Y_s$ and $y \in Y_t$ for $s < t < d$.
\end{lemma}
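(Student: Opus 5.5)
The plan is to first reduce to a Ramsey problem on $N := R_k(2d)$ \qt{blocks}, and then use the sparsity and the bound $k^{N} \leq \min X$ to make the colors between blocks uniform. I have not found a mechanism that does this second step without losing largeness; the step flagged below is the gap.

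\emph{Step 1: blocks and representatives.} By \Cref{prop:largeness-below-omegaomega}(3), $X$ contains $\omega^n$-large subsets $Z_0 < Z_1 < \dots < Z_{N-1}$. Let $m_i = \min Z_i$. Define $g : [N]^2 \to k$ by $g(i,j) = f(m_i, m_j)$. By the definition of $R_k(2d)$ there is a $g$-homogeneous set $I = \{i_0 < \dots < i_{2d-1}\}$ of color $c$. So the representatives of the selected blocks already satisfy the required condition. The remaining work is to upgrade this to whole blocks.

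\emph{Step 2: types of elements.} For an element $x \in Z_{i}$, let its type be the vector $\tau(x) = (f(x, m_j))_{j \in I,\, j > i}$. There are at most $k^{N} \leq \min X$ possible types. Write $\widehat{Z}_s := Z_{i_{2s}} \cup Z_{i_{2s+1}}$ for the union of two consecutive selected blocks, and aim to take
$$Y_s \subseteq \{m_{i_{2s}}\} \cup Z_{i_{2s+1}},$$
with $Y_s$ of the form $\{m_{i_{2s}}\} \cup W_s$ and $W_s \subseteq Z_{i_{2s+1}}$ an $\omega^n$-large set on which $\tau$ is constant with entries equal to $c$. Such a set is automatically $\omega^n+1$-large, since an $\omega^n+1$-large set is exactly one point followed by an $\omega^n$-large set. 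Grouping the blocks in pairs is what would allow the representative of the left block to serve as the \qt{$+1$} element, while the right block supplies the $\omega^n$-large part.

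\emph{Step 3 (expected main obstacle).} Step 2 needs $W_s$ to be $\omega^n$-large and type-homogeneous, and the difficulty is to get this without passing to a larger ordinal. A plain application of \Cref{prop:pigeonhole-principle-enhanced}(1) to $Z_{i_{2s+1}}$ would require $Z_{i_{2s+1}}$ to be $\omega^n \cdot k^{N}$-large, which we do not have. The hypotheses intended for this are the $(x \mapsto xk^x)$-sparsity together with $k^{N} \leq \min X$. The idea I would try first is to homogenize on the sparse skeleton of maxima of sub-blocks rather than on the blocks themselves. Concretely, I would first use sparsity to show that each element $x$ of $Z_{i}$ is followed by a gap large enough to host every one of the $k^{x}$ possible color patterns. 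Then I would choose the representatives \emph{inside} the gaps, after fixing the pattern, so that the condition becomes \qt{$f(x, m_j)=c$ for all $x$ in earlier selected blocks}. That condition can then be arranged by applying Ramsey (Step 1) to blocks colored by patterns instead of single colors.

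Even if this works, Steps 1--3 only control pairs of the form (element, representative). The conclusion of the lemma requires $f(x,y)=c$ for \emph{all} $x \in Y_s$ and $y \in Y_t$ with $s<t$, including pairs of non-representative elements. With $Y_s = \{m_{i_{2s}}\} \cup W_s$, those pairs are left uncontrolled by this construction. Closing this gap is the part of the plan I am least sure of, and the lemma is not proved until it is closed.
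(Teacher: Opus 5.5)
Your proposal does not yet prove the lemma. The gaps you flag are genuine, and Step~2 already fails as stated. You apply Ramsey's theorem first, to the minima $m_i$ of the original blocks, and then ask for an $\omega^n$-large $W_s$ on which $\tau$ is constant \emph{with all entries equal to $c$}. Pigeonhole only yields \emph{some} constant type. If, say, $f(m_i,m_j)=c$ while $f$ takes a value $c'\neq c$ on every other pair, no such $W_s$ exists.

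The missing idea is to homogenize first, in both directions and from right to left, and to apply Ramsey's theorem last. Having built $V_j\subseteq Z_j$ for all $j>i$, color $x\in Z_i$ by the tuple consisting of the colors $f(y,x)$ for \emph{every} $y\in Z_0\cup\dots\cup Z_{i-1}$, together with the colors $f(x,\min V_j)$ for $j>i$. Let $V_i\subseteq Z_i$ be homogeneous for this coloring. Then for $i<j$, $x\in V_i$ and $y\in V_j$ you get $f(x,y)=f(x,\min V_j)=f(\min V_i,\min V_j)$: the first equality by homogeneity of $V_j$, the second by homogeneity of $V_i$. So \emph{all} cross pairs are controlled, not just those involving representatives. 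Only then apply the definition of $R_k(2d)$ to $(i,j)\mapsto f(\min V_i,\min V_j)$, obtaining $i_0<\dots<i_{2d-1}$ and a color $c$. The asymmetry is forced: you cannot homogenize against all later elements, since there are too many of them, but you can against all earlier ones, since they are bounded by $\max Z_{i-1}$.

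This is exactly where sparsity is used, and it dissolves your Step~3 obstacle once you stop insisting that $V_i$ be $\omega^n$-large. Assume $n\geq 1$. The coloring on $Z_i$ has at most $k^{\max Z_{i-1}}$ colors. For $k\geq 2$ this holds because the earlier elements lie in $[\min Z_0,\max Z_{i-1}]$, there are fewer than $N$ later blocks, and $N<k^N\le\min Z_0$. Meanwhile $(x\mapsto xk^x)$-sparsity gives $\min Z_i\geq \max Z_{i-1}\cdot k^{\max Z_{i-1}}$. As $Z_i\setminus\{\min Z_i\}$ is $\omega^{n-1}\cdot\min Z_i$-large, \Cref{prop:pigeonhole-principle-enhanced}(1) yields a homogeneous $V_i$ that is $\omega^{n-1}\cdot\max Z_{i-1}$-large (for $i=0$, use $k^N\le\min Z_0$ instead). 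Hence, by \Cref{prop:largeness-below-omegaomega}(2), $\{z\}\cup V_i$ is $\omega^n$-large for every $z$ in an earlier block.

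Your pairing idea then works, but with \emph{two} borrowed elements. Set $Y_s=\{z^0,z^1\}\cup V_{i_{2s+1}}$, where $z^0<z^1$ are the last two elements of $V_{i_{2s}}$. Here $z^1$ heads the $\omega^n$-large part (it fixes the multiplier $\omega^{n-1}\cdot z^1$) and $z^0$ supplies the $+1$. They must come from the homogenized set $V_{i_{2s}}$, not be the original minimum $m_{i_{2s}}$, whose colors towards later blocks are uncontrolled. With this choice every pair $x\in Y_s$, $y\in Y_t$ with $s<t$ has color $c$. (For $n=0$ the blocks are singletons and your Step~1 alone suffices.)
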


\begin{proof}
If $n = 0$, the results holds by definition of $R$ and $\omega^0$-largeness.
Indeed, let $f : [X]^2 \to k$ be a coloring. Since $|X| \geq R_k(2d)$, by finite Ramsey's theorem for pairs, there is an $f$-homogeneous set $\{y_0 < y_1 < \dots < y_{2d-1} \} \subseteq X$. Since any singleton element is $\omega^0$-large, the set $Y_i = \{y_{2i}, y_{2i+1}\}$ is $\omega^0+1$-large. Assume from now on that $n > 0$.

Let $X_0 < \dots < X_{R_k(2d)-1}$ be a decomposition of $X$ into $\omega^{n}$-large sets. 
Consider the sequence $(f_i)_{i < R_k(2d)}$ of colorings and the sequence $Z_0 < \dots < Z_{R_k(2d)-1}$ of sets defined inductively as follows:

Assume that $Z_{i+1}, \dots, Z_{R_k(2d) - 1}$ have been defined for some $i < R_k(2d)$, and consider the coloring $f_i : X_i \to k^{R_k(2d) - 1 - i + \sum_{j < i} |X_j|}$ defined by 
$$f_i(x) = (f(y,x))_{y \in \bigcup_{j < i} X_i} \sqcup (f(x,y))_{y \in \{\min Z_j : i < j < R_k(2d)\}}$$ 
As $\min X_0 \geq R_k(2d)$ and as the $X_i$ are disjoint, we get that for $i > 0$, $R_k(2d) + \sum_{j < i} |X_j| \leq \max X_{i-1}$. By $(x \mapsto xk^x)$-sparsity of~$X$, $k^{\max X_{i-1}} \cdot \max X_{i-1} \leq \min X_i$, so $k^{R_k(2d) + \sum_{j < i} |X_j|} \cdot \max X_{i-1} \leq \min X_i$. By \Cref{prop:pigeonhole-principle-enhanced}(1), there exists some $\omega^{n-1}\cdot (\max X_{i-1})$-large and $f_i$-homogeneous subset $Z_{i} \subseteq X_{i}$. For $i = 0$, since $k^{R_k(2d)} \leq \min X_0$, by \Cref{prop:pigeonhole-principle-enhanced}(1), there exists some $\omega^{n-1}$-large and $f_0$-homogeneous subset $Z_{0} \subseteq X_{0}$.

The $(Z_i)_{i < R_k(2d)}$ are defined such that, for every $i < j < R_k(2d)$ and $x \in Z_i$, $y \in Z_j$, we have $f(x,y) = f(\min Z_i, \min Z_j)$. By definition of $R_k(2d)$, there exists some subset $\{i_0 < \dots < i_{2d - 1}\} \subseteq \{0, \dots, R_k(2d)-1\}$ and some color $c < k$ such that, for every $j < k < 2d$, $x \in Z_{i_j}$ and $y \in Z_{i_k}$, $f(x,y) = c$.

Finally, let $Y_j = \{z^0_{2j}, z^1_{2j} \} \cup Z_{i_{2j+1}}$ for every $j < d$, where $z^0_{2j}$ and $z^1_{2j}$ are the two last elements of $Z_{i_{2j}}$. Every $Y_j$ is $\omega^{n}+1$-large.% hence the set $Y = \bigcup_{j < d} Y_j$ is $\omega^n \cdot d$-large.
\end{proof}

\subsection{Erd\H{o}s-Moser theorem}

The goal of this section is to prove that every $\omega^{n+3}$-large set~$X$ with $\min X \geq 5$ is $\EM$-$\omega^n$-large. The previous known bound of Ko{\l}odziejczyk and Yokoyama~\cite{kolo2020some} was $\omega^{36n+4}$-largeness.

\begin{proposition}\label[proposition]{prop:em-k-large}
Let $n \geq 0$, let $X \finsub \NN$ be and $(x \mapsto x^{R_x(2x)})$-sparse, %and let $d \geq 1$ be such that $2^{R_2(2d)} \leq \min X$.
\begin{itemize}
    \item[(1)] If $X$ is $\omega^n+1$-large with $2 \leq \min X$, then it is $\fEM$-$\omega^n$-large.
    \item[(2)] If $X$ is $\omega^n \cdot R_k(2d)$-large with $k^{R_k(2d)} \leq \min X$, then it is $\fEM_k$-$\omega^n \cdot d$-large.
    \item[(3)] If $X$ is $\omega^n \cdot 4$-large, then it is $\fEM$-$\omega^n+1$-large.
\end{itemize}
\end{proposition}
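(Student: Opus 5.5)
The three items will be proved by a simultaneous induction on~$n$: item~(1) at level~$n$ uses item~(2) at level~$n-1$, and item~(2) at level~$n$ uses item~(1) at level~$n$. Item~(3) is then derived from~(1) and~(2). Two facts are used throughout. First, $(x \mapsto x^{R_x(2x)})$-sparsity passes to subsets. Second, since $R_k(2d)\geq 2$ and $k \leq x$ in the relevant ranges, this sparsity implies the $(x\mapsto xk^x)$-sparsity needed in \Cref{lem:new-stabilize-bounds-optimal}.

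\emph{Item (2) from item (1) at the same level.} Let $f : [X]^2 \to k$. By \Cref{lem:new-stabilize-bounds-optimal} I obtain $\omega^n+1$-large sets $Y_0 < \dots < Y_{d-1}$ and a colour $c$ such that $f(x,y)=c$ whenever $x\in Y_s$, $y \in Y_t$ and $s<t$. Each $Y_s$ is sparse and satisfies $\min Y_s \geq \min X \geq k^{R_k(2d)} \geq \max(2,k)$. Hence $f\restriction [Y_s]^2$ uses at most $\min Y_s$ colours, and item~(1) yields an $\omega^n$-large $f$-fallow $H_s \subseteq Y_s$. I then check that $H = \bigcup_s H_s$ is fallow. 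A triple inside one block is fine. For a triple $x<y<z$ with $x,y \in H_s$ and $z$ in a later block, we have $f(x,z)=c=f(y,z)$. For a triple with $x$ in an earlier block, we have $f(x,z)=c=f(x,y)$. Finally, $H$ is $\omega^n\cdot d$-large by \Cref{prop:largeness-below-omegaomega}(3).

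\emph{Item (1).} The case $n=0$ is trivial, since $X$ has two elements and any singleton is fallow and $\omega^0$-large. For $n\geq 1$, let $x_0<x_1$ be the two least elements and $f : [X]^2\to x_0$. By unfolding \Cref{prop:largeness-below-omegaomega}(2), $X\setminus\{x_0,x_1\}$ is $\omega^{n-1}\cdot x_1$-large. By sparsity, $x_1 \geq x_0^{R_{x_0}(2x_0)} \geq x_0 \cdot R_{x_0}(2x_0)$. By \Cref{prop:pigeonhole-principle-enhanced}(1), applied to the colouring $y \mapsto f(x_0,y)$, I get a subset $Y$ that is $\omega^{n-1}\cdot R_{x_0}(2x_0)$-large and on which $f(x_0,\cdot)$ is constant. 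Since $\min Y \geq x_1 \geq x_0^{R_{x_0}(2x_0)}$, item~(2) at level $n-1$ with $k=d=x_0$ applies. It yields an $f$-fallow $H' \subseteq Y$ that is $\omega^{n-1}\cdot x_0$-large. Then $H = \{x_0\}\cup H'$ is $\omega^n$-large. It is fallow because every triple with least element $x_0$ satisfies $f(x_0,z)=f(x_0,y)$.

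\emph{Item (3).} This is the step I expect to be the main obstacle: the sizes do not directly match, so the right decomposition has to be found. I would split $X$ into $\omega^n$-large blocks $X_0<X_1<X_2<X_3$ and aim for a fallow set of the form $\{u\}\cup H$. Here $H$ should be $\omega^n$-large, fallow, and homogeneous for $f(u,\cdot)$, so that prepending $u$ preserves fallowness and gives $\omega^n+1$-largeness. My first attempt takes $u = \max X_0$. I then use the sparsity gap $\min X_1 \gg u$ together with \Cref{prop:pigeonhole-principle-enhanced} on $X_1\cup X_2\cup X_3$ to make $f(u,\cdot)$ constant while keeping a set of the form $\{v\}\cup Z$ that is $\omega^n+1$-large. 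Item~(1) is then applied to that set. The delicate point is bookkeeping the largeness lost in the pigeonhole step: pigeonholing on $\min X$ colours costs a factor that three $\omega^n$-blocks do not obviously absorb. If this fails, I would instead use the $\omega^n$-large set of item~(1) applied to $\{\max X_1\}\cup X_2$ and append one point of $X_3$. That point would be chosen via a pigeonhole argument on $X_3$ with respect to the finitely many colours $f(\cdot,z)$ restricted to $H$, using sparsity to guarantee that enough such points exist.
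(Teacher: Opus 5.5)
Your items (1) and (2) are correct and follow the paper's mutual induction essentially verbatim. In (1), your factor $x_0\cdot R_{x_0}(2x_0)$, rather than the paper's $2R_{x_0}(2x_0)$, is what \Cref{prop:pigeonhole-principle-enhanced}(1) actually requires.

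The gap is item (3): you give no proof, and neither sketch can be completed as it stands.

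\emph{Your fallback fails structurally.} Since $\{\omega^n+1\}(x)=\omega^n$, the extra unit in $\omega^n+1$ is consumed by the \emph{least} element. So $H\cup\{z\}$ with $z>\max H$ is in general not $(\omega^n+1)$-large: the paper's $\{3,5,6,7,8\}$ is $\omega$-large but not $(\omega+1)$-large. The extra point must lie below $H$.

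\emph{Your first sketch is not merely a bookkeeping problem.} Take $X$ exactly $\omega^n\cdot 4$-large, so that its blocks $X_0<X_1<X_2<X_3$ are exactly $\omega^n$-large. Let $f(u,\cdot)$ be constant on the part of each block above $u$, with pairwise distinct colours. Then every $f(u,\cdot)$-homogeneous set lies in a single block. An exactly $\omega^n$-large set $A$ has no $(\omega^n+1)$-large subset, since such a subset would make $A\setminus\{\min A\}$, hence $A\setminus\{\max A\}$, $\omega^n$-large. So the homogeneous $(\omega^n+1)$-large set $\{v\}\cup Z$ you want need not exist.

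The paper's own proof of (3) is exactly this strategy with $u=\min X$. Citing \Cref{prop:pigeonhole-principle-enhanced}(1), it takes an $\omega^n\cdot 2$-large $f(x_0,\cdot)$-homogeneous $Y=Y_0<Y_1$, applies (1) to $\{\max Y_0\}\cup Y_1$, and prepends $x_0$. But with $\min X$ colours that proposition needs $\omega^n\cdot 2\min X$-largeness, and the example above rules out such a $Y$ once $\min X\geq 4$. Your worry is justified, and it applies to the paper's argument too.

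\emph{The missing idea} is to choose the bottom point \emph{after} homogenizing, from a pool of candidates. The colour-matching is then paid for by a pigeonhole on the candidates, and the homogenization cost lands in a coefficient that sparsity controls. For $n\geq 1$ ($n=0$ is trivial), two blocks $X_0<X_1$ suffice:
\begin{itemize}
\item[(a)] Let $x_0=\min X$, let $C$ be the $x_0+1$ least elements of $X_0$, $b=\max C$, and $w=\min X_1$. Then $X_1\setminus\{w\}$ is $\omega^{n-1}\cdot w$-large. By sparsity, $w\geq b^{R_b(2b)}\geq x_0^{x_0+1}R_{x_0}(2b)$, using $b\geq 2x_0$.
\item[(b)] Apply \Cref{prop:pigeonhole-principle-enhanced}(1) to $y\mapsto (f(s,y))_{s\in C}$. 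This gives an $\omega^{n-1}\cdot R_{x_0}(2b)$-large $Y$, with $\min Y>w\geq x_0^{R_{x_0}(2b)}$, on which every $f(s,\cdot)$ with $s\in C$ is constant.
\item[(c)] Item (2) at level $n-1$ with $k=x_0$ and $d=b$ yields an $f$-fallow, $\omega^{n-1}\cdot b$-large $Z\subseteq Y$.
\item[(d)] Since $\card C=x_0+1$, there are $s_0<s_1$ in $C$ with the same colour $c$ towards $Z$.
\end{itemize}
Now $\{s_1\}\cup Z$ is $\omega^n$-large because $b\geq s_1$, so $\{s_0,s_1\}\cup Z$ is $(\omega^n+1)$-large. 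It is also fallow. Triples whose least element is $s_0$ or $s_1$ and whose other two points are in $Z$ are handled by homogeneity. The triple $(s_0,s_1,z)$ is handled by $f(s_0,z)=c=f(s_1,z)$.
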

\begin{proof}
We prove (1) and (2) by mutual induction on~$n$.
(1) holds trivially for $n = 0$, as any $\omega^0$-large set is non-empty, and any singleton is $\EM$-$\omega^0$-large.
\bigskip

Suppose (1) holds for~$n$. Let us show that $(2)$ holds for~$n$. Let $k,d \in \NN$, let $X \finsub \NN$ be $(x \mapsto x^{R_x(2x)})$-sparse, $(\omega^{n} \cdot R_k(2d))$-large and such that $k^{R_k(2d)} \leq \min X$ and fix some coloring $f : [X]^2 \to k$. As $k^{R_k(2d)} \leq \min X$, we have $k \leq \min X$, hence $X$ is $(x \mapsto xk^x)$-sparse, and by \Cref{lem:new-stabilize-bounds-optimal}, there exists some sequence $Z_0 < \dots < Z_{d-1}$ of $\omega^n+1$-large subsets of~$X$ and some color~$c < k$ such that $f(x,y) = c$ for every $x \in Z_s$ and $y \in Z_t$ for $s < t < d$. By (1) for~$n$, for each~$i < d$, there is an $f$-fallow $\omega^n$-large subset $W_i \subseteq Z_i$. Then, the set $W = \bigcup_{i < d} W_i$ is $\omega^n \cdot d$-large and $f$-fallow. 

%Indeed, pick by contradiction some $x < y < z \in W$ such that $f(x,y) = f(y,z)$ and $f(x,y) \neq f(x,z)$. We cannot have $Z_i \cap \{x,y,z\} = \{x\}$ or $Z_i \cap \{x,y,z\} = \{z\}$ for some $i < d$ as it would imply $f(x,y) = f(x,z) = c$ or $f(x,z) = f(y,z) = c$ by our application of \Cref{lem:new-stabilize-bounds-optimal}. Similarly we cannot have $Z_i \cap \{x,y,z\} = \{y\}$ for some $i < d$ as it implies the existence of some $j < d$ such that $Z_j \cap \{x,y,z\} = \{z\}$, since $x < y < z$. And finally, we cannot have $Z_i \cap \{x,y,z\} = \{x,y,z\}$ for some $i < d$ by our application of the inductive hypothesis.
Indeed, pick by contradiction some $x < y < z \in W$ such that $f(x,z) \notin \{f(x,y), f(y,z)\}$. We cannot have $Z_i \cap \{x,y,z\} = \{x\}$ or $Z_i \cap \{x,y,z\} = \{z\}$ for some $i < d$ as it would imply $f(x,y) = f(x,z) = c$ or $f(x,z) = f(y,z) = c$ by our application of \Cref{lem:new-stabilize-bounds-optimal}. Similarly we cannot have $Z_i \cap \{x,y,z\} = \{y\}$ for some $i < d$ as it implies the existence of some $j < d$ such that $Z_j \cap \{x,y,z\} = \{z\}$, since $x < y < z$. And finally, we cannot have $Z_i \cap \{x,y,z\} = \{x,y,z\}$ for some $i < d$ by our application of the inductive hypothesis.
\bigskip

Suppose (2) holds for $n$. Let us show (1) holds for~$n+1$. Let $X \finsub \NN$ be $(x \mapsto x^{R_x(2x)})$-sparse and $(\omega^{n+1} + 1)$-large and let $x_0 = \min X$. Fix some coloring $f : [X]^2 \to x_0$, then, $X \setminus \{x_0\}$ is $\omega^{n} \cdot 2R_{x_0}(2x_0)$-large as $2R_{x_0}(2x_0) \leq x_0^{R_{x_0}(2x_0)}$ when $x_0 \geq 2$ and $x_0^{R_{x_0}(2x_0)} \leq \min X \setminus \{x_0\}$. By \Cref{prop:pigeonhole-principle-enhanced}(1), there exists some $\omega^{n} \cdot R_{x_0}(2x_0)$-large subset $Y \subseteq X \setminus \{x_0\}$ such that $f(x_0, y) = f(x_0,z)$ for every $y,z \in Y$.
By (2) for $n$, there is some $f$-fallow $\omega^n \cdot x_0$-large subset~$Z \subseteq Y$.
Then, the set $W = \{x_0\} \cup Z$ is $f$-fallow and $\omega^{n+1}$-large.

Indeed, pick by contradiction some $x < y < z \in W$ such that $f(x,z) \notin \{f(x,y), f(y,z)\}$.
We cannot have $x = x_0$ as $f(x_0,y) = f(x_0, z)$ for every $y,z \in Y$. And we cannot have $x_0 \in Z$, otherwise $\{x, y, z \} \subseteq Z$, contradicting the fact that $Z$ is $f$-fallow.
\bigskip

(3) For $n = 0$, if $X$ is $\omega^0 \cdot 4$-large, then $\card X \geq 4$. In particular, any two-element subset of~$X$ is $\fEM$-$\omega^0 +1$-large. Suppose $n > 0$. Fix a coloring $f : [\NN]^2 \to \min X$. Let $x_0 = \min X$ and $g : X \to x_0$ be defined by $g(x_0) = 0$ and $g(y) = f(x_0, y)$ for $y \neq x_0$. By \Cref{prop:pigeonhole-principle-enhanced}(1), there is an $\omega^n \cdot 2$-large $g$-homogeneous subset~$Y \subseteq X$. Let $Y_0 < Y_1$ be its decomposition into $\omega^n$-large sets. Let $x_1 = \max Y_0$.
Since $n > 0$, $\card Y_0 \geq 2$, so $x_1 > x_0$ and $\{x_1\} \cup Y_1$ is $\omega^n+1$-large. By (1), there is a $\omega^n$-large $f$-fallow subset~$Z \subseteq \{x_1\} \cup Y_1$. 
Since $Z$ is $g$-homogeneous, then $\{x_0\} \cup Z$ is $f$-fallow, and $\omega^n+1$-large.
\end{proof}

\begin{corollary}\label[corollary]{cor:em-n+3-large}
Let $n \geq 0$. If $X \finsub \NN$ is $\omega^{n+3}$-large and $\min X \geq 7$, then it is $\fEM$-$\omega^n$-large.
\end{corollary}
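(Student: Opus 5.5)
The plan is to combine the sparsification lemma with \Cref{prop:em-k-large}(1). Fix a coloring $f : [X]^2 \to \min X$ and write $x_0 = \min X \geq 7$.

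\textbf{Step 1: extract a sparse subset.} Apply \Cref{lem:sparsity-simplified} with $m = n$ and $\ell = 1$. Its hypothesis $3\ell + 2 = 5 \leq \min X$ holds, so it yields an $\omega^2 \cdot 3$-sparse subset $Y \subseteq X$ that is $(\omega^n + 1)$-large. Inspecting the proof of that lemma, we may take $\min Y = \min X = x_0$, since $Y = \{\min X\} \cup Z$.

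\textbf{Step 2: upgrade the sparsity.} Since $\min Y \geq 7$, \Cref{lem:omega3-3-sparsity} shows that $Y$ is $(x \mapsto x^{R_x(2x+2)})$-sparse. Because $R_x(2x) \leq R_x(2x+2)$, the set $Y$ is also $(x \mapsto x^{R_x(2x)})$-sparse, which is the sparsity assumption of \Cref{prop:em-k-large}.

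\textbf{Step 3: apply \Cref{prop:em-k-large}(1).} The set $Y$ is $(\omega^n+1)$-large, $(x \mapsto x^{R_x(2x)})$-sparse, and satisfies $\min Y \geq 2$. Hence $Y$ is $\fEM$-$\omega^n$-large, with the number of colors equal to $\min Y$.

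\textbf{Main obstacle: matching the number of colors.} The definition of $\fEM$-$\omega^n$-large on $X$ uses $\min X$ colors, while \Cref{prop:em-k-large}(1) on $Y$ handles $\min Y$ colors. If $\min Y = \min X$, then $f$ restricted to $[Y]^2$ is directly a $\min Y$-coloring, and we obtain an $\omega^n$-large $f$-fallow subset of $Y \subseteq X$, as required.

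I expect to need to check this point carefully. \Cref{lem:sparsity-simplified} is stated only as producing some subset, so I would re-read its proof and confirm that $\min X$ is kept as the first element. Should the minimum change, the fallback is to observe that any $\min X$-coloring is also a $\min Y$-coloring whenever $\min Y \geq \min X$, which holds automatically for a subset $Y \subseteq X$. So the color count only helps, and the argument goes through either way.

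The remaining work is routine: verify that upward closure of fallowness and largeness causes no issue when passing from $Y$ back to $X$.
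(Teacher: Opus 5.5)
Your proposal is correct and follows the paper's argument: \Cref{lem:sparsity-simplified} with $\ell = 1$ gives an $\omega^2\cdot 3$-sparse $(\omega^n+1)$-large $Y \subseteq X$, \Cref{lem:omega3-3-sparsity} upgrades its sparsity, and \Cref{prop:em-k-large}(1) concludes. Your extra checks supply details the paper leaves implicit: the monotonicity $R_x(2x) \le R_x(2x+2)$, and the fact that a $\min X$-coloring is a $\min Y$-coloring because $\min Y \geq \min X$. Finally, no closure argument is needed at the end, since the fallow set found inside $Y$ is already a subset of $X$.
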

\begin{proof}
Since $\min X \geq 5$, then by \Cref{lem:sparsity-simplified}, there is an $\omega^2 \cdot 3$-sparse $\omega^n+1$-large subset~$Y \subseteq X$.
By \Cref{lem:omega3-3-sparsity}, since $\min X \geq 7$, $Y$ is $(x \mapsto x^{R_x(2x+2)})$-sparse, so by \Cref{prop:em-k-large}, $Y$ is $\fEM$-$\omega^n$-large.
\end{proof}

\subsection{Transitive colorings}

The strategy to compute bounds for transitive Ramsey's theorem for pairs and $k$-colors is different from the one for fallow Erd\H{o}s-Moser theorem.
Patey and Yokoyama~\cite[Lemma 4.4]{patey2018proof} proved that every $\omega^{2n+6}$-large set is $\ADS$-$\omega^n$-large, using a slightly modified notion of largeness. The proof consisted in defining a coloring $f : [\NN]^2 \to 2n+2$ and applying Ketonen and Solovay's bound~\cite{ketonen1981rapidly}. Ko{\l}odziejczyk and Yokoyama~\cite[Theorem 2.11]{kolo2020some} gave a more direct but less optimal proof by getting rid of the intermediate notion of largeness, and showed that any $\omega^{4n+4}$-large set~$X$ with $\min X \geq 3$ is $\trRT^2_2$-$\omega^n$-large.

In this section, we give a direct and optimal proof, by generalizing the bound of Ketonen and Solovay to $\omega$-$g$-largeness.
Given an increasing function $g : \NN \to \NN$, a set $X$ is \emph{$\omega$-$g$-large} if $\card X > g(\min X)$. Whenever $g$ is the identity function, this yields the standard notion of $\omega$-largeness. The following proposition is a generalization of Ketonen and Solovay~\cite[Lemma 6.3]{ketonen1981rapidly} and of Ko{\l}odziejczyk and Yokoyama~\cite[Theorem 1.5]{kolo2020some}:

\begin{proposition}\label[proposition]{prop:ketonen-solovay-revisited}
Fix a primitive recursive function~$g : \NN \to \NN$.
If $X$ is $(\omega^{k} + 1)$-large and $g$-sparse for $k = 1$ and $(x \mapsto k^{R_k(2g(x))})$-sparse for $k \geq 2$, then it is $\RT^2_k$-$\omega$-$g$-large.
\end{proposition}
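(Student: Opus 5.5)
The plan is to argue by induction on $k$, following Ketonen and Solovay's original argument for their Lemma~6.3. At each step we peel off the minimum $x_0=\min X$, use the pigeonhole principle (\Cref{prop:pigeonhole-principle-enhanced}) to fix the colour of pairs $\{x_0,y\}$, and pass to a set that is $\omega^{k-1}$-large and still sparse. The sparsity hypothesis is exactly what makes $\min X$ (or the next element) large enough to absorb the numerical factors that come up.

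\textbf{Base case $k=1$.} Every subset is homogeneous, so it suffices to check that $X$ itself is $\omega$-$g$-large. Since $X$ is $(\omega+1)$-large, it has at least two elements $x_0<x_1$, and $X\setminus\{x_0\}$ is $\omega$-large, so $\card X\ge x_1+1$. By $g$-sparsity, $x_1\ge g(x_0)$, hence $\card X>g(x_0)=g(\min X)$.

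\textbf{Inductive step, first attempt.} For $k\ge2$, take $f:[X]^2\to k$ and let $x_0=\min X$. Then $X\setminus\{x_0\}$ is $\omega^{k-1}\cdot x_0+1$-large. Colour each $y>x_0$ by $f(x_0,y)$ and apply \Cref{prop:pigeonhole-principle-enhanced}(1) with $a=\lfloor x_0/k\rfloor$ to get a homogeneous set $Y$ of colour $c$ that is roughly $\omega^{k-1}\cdot(x_0/k)$-large. The difficulty is that $f$ restricted to $Y$ may still use all $k$ colours, so the induction hypothesis for $k-1$ colours does not apply directly.

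\textbf{Main obstacle and the fix.} Handling the surviving colours is the key step. I would not induct on the number of colours. Instead I would replace the single minimum by a finite Ramsey argument on initial elements, using the $R_k(2g(x))$ appearing in the sparsity hypothesis. Concretely:
\begin{itemize}
\item[(i)] By sparsity, $X\setminus\{x_0\}$ contains at least $R_k(2g(x_0))$ consecutive $\omega^{k-1}$-large blocks. Apply \Cref{lem:new-stabilize-bounds-optimal} with $n=k-1$ and $d=g(x_0)$; the needed bounds $k^{R_k(2d)}\le\min$ and $(x\mapsto xk^x)$-sparsity follow from $(x\mapsto k^{R_k(2g(x))})$-sparsity.
\item[(ii)] This yields $d=g(x_0)$ blocks $Y_0<\dots<Y_{d-1}$ and a colour $c$ such that all cross-block pairs have colour $c$.
\item[(iii)] Choose one element from each block, together with a suitable extra element, so that the chosen set is $c$-homogeneous. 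Its size is at least $g(x_0)+1$, and its minimum can be kept equal to $x_0$ or a comparable value, by first homogenising $f(x_0,\cdot)$ with \Cref{prop:pigeonhole-principle-enhanced}. This gives an $f$-homogeneous $\omega$-$g$-large set.
\end{itemize}

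\textbf{Where the difficulty lies.} The delicate point is the arithmetic: showing that $\omega^k+1$-largeness together with the sparsity hypothesis supplies enough $\omega^{k-1}$-blocks (indeed $\omega^0$-blocks suffice) and a large enough minimum. I would first try to avoid induction altogether and reduce everything to one application of finite Ramsey $R_k(2g(x_0))$ on the first elements after $x_0$, using sparsity only to guarantee enough elements. If that ties the minimum to the wrong element, I would fall back on the inductive peeling described above, where the induction hypothesis for $k-1$ colours is applied to a set that avoids one colour. That set is obtained by restricting to $Y$ and noting that any occurrence of colour $c$ extending $x_0$ already gives homogeneity.
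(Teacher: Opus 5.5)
Your base case is correct. So is the part of step (iii) where the colour of the pairs $\{x_0,y\}$, $y\in Y$, coincides with the cross-block colour from \Cref{lem:new-stabilize-bounds-optimal}. You use the letter $c$ for both; call the first one $c_0$. If $c_0=c$, then $x_0$ together with one point from each of the $g(x_0)$ blocks is homogeneous of size $g(x_0)+1$, exactly as in the paper's first case.

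The gap is the case $c_0\neq c$. Nothing in your construction excludes it, since $c$ comes out of finite Ramsey independently of $c_0$. Then $x_0$ cannot be added, and there is no ``comparable'' replacement for it. Any other minimum $y$ lies in a block, so $y\geq \min(X\setminus\{x_0\})\geq k^{R_k(2g(x_0))}$ by sparsity. Your set has only about $g(x_0)$ elements, which is in general far fewer than $g(y)$. So the obstacle is not arithmetic, and $\omega^0$-blocks do \emph{not} suffice. Your fallback does not repair this. Nothing forces $Y$ to contain a large subset avoiding $c_0$, and a pair of colour $c_0$ inside $Y$ only yields a homogeneous triple with $x_0$, not a set of size $>g(x_0)$.

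What is missing is the induction on the number of colours via colour fusion, which is how the paper closes this case. It is also why the blocks must be $(\omega^{k-1}+1)$-large.

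\begin{itemize}
\item When $c_0\neq c$, merge $c_0$ and $c$ into one colour, giving a $(k-1)$-colouring $f'$.
\item Apply the induction hypothesis, with the same $g$, inside the single block $Y_1$. The needed sparsity is inherited because $R_{k-1}\leq R_k$. This gives an $f'$-homogeneous, $\omega$-$g$-large $Z_1\subseteq Y_1$.
\item If the colour of $Z_1$ is not the merged one, $Z_1$ is already $f$-homogeneous and you are done.
\item Otherwise $f$ takes only the values $c_0,c$ on $[Z_1]^2$. By sparsity, $\card Z_1>g(\min Z_1)\geq \min Z_1> R_2(g(\min Y_0))$. Two-colour finite Ramsey then gives $W\subseteq Z_1$ of size at least $g(\min Y_0)$, homogeneous for $c_0$ or for $c$.
\item If $W$ has colour $c_0$, then $\{x_0\}\cup W$ is homogeneous and has more than $g(\min Y_0)\geq g(x_0)$ elements.
\item If $W$ has colour $c$, then $\{\min Y_0\}\cup W$ is homogeneous, because pairs between $Y_0$ and $Y_1$ have colour $c$.
\end{itemize}

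Without this reduction in the number of colours, or a genuine substitute for it, your argument does not close.
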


\begin{proof}
By induction on $k$.
Suppose $k = 1$. Let $f : X \to 1$ be the trivial constant coloring. Then $X$ is $f$-homogeneous. Since $X$ is $\omega^k+1$-large, then, letting $X = \{x_0 < x_1\} \sqcup Y$, then $\card Y \geq x_1$ and by $g$-sparsity of~$X$, $x_1 > g(x_0)$, so $\card X > g(\min X)$.

Assume the property to be true for some $k > 0$. Let $X$ be $(\omega^{k+1} + 1)$-large and $(x \mapsto (k+1)^{R_{k+1}(2g(x))})$-sparse and consider a coloring $f : [X]^2 \to k+1$. Let $x_0 = \min X$, by sparsity, $\min (X \setminus \{x_0\}) \geq (k+1)^{R_{k+1}(2g(x_0))}$ and $X \setminus \{x_0\}$ is an $(\omega^k\cdot (k+1) \times R_{k+1}(2g(x_0)))$-large set.

Consider the coloring $h : X \setminus \{x_0\} \to k+1$ defined by $h(y) = f(x_0,y)$. By \Cref{prop:pigeonhole-principle-enhanced}(1), there exists some $(\omega^k\cdot R_{k+1}(2g(x_0)))$-large and $h$-homogeneous subset $Y$ of $X \setminus \{x_0\}$, and let $c_0 \leq k$ be the corresponding color.

By \Cref{lem:new-stabilize-bounds-optimal}, as $\min Y \geq (k+1)^{R_{k+1}(2g(x_0))}$, there exists a sequence $Y_0 < \dots < Y_{g(x_0) - 1}$ of $\omega^k+1$-large subsets of $Y$ and some color $c_1$ such that $f(x,y) = c_1$ for every $x \in Y_s$ and $y \in Y_t$ for $s < t < g(x_0)$.

If $c_0 = c_1$, then the set $\{x_0\} \cup \{\min Y_0 < \dots < \min Y_{g(x_0) - 1}\}$ is $f$-homogeneous and $\omega$-$g$-large and we are done. So assume $c_0 \neq c_1$ and consider the coloring $f' : [X]^2 \to k$ to be the same coloring as $f$, except that the colors $c_0$ and $c_1$ are fuse into one. By the inductive hypothesis, there exists some $f'$-homogeneous subsets $Z_{1} \subseteq Y_{1}$. 

If $Z_1$ is $f$ homogeneous, then we are done. Otherwise, $f([Z_1]^2) = \{c_0, c_1\}$, and, since $\min Z_{1} > R_2(g(\min Z_0))$ by sparsity of $X$, there exists some $f$-homogeneous subset $W \subseteq Z_{1}$ of size $\geq g(\min Z_0)$. If $f([W]^2) = \{c_0\}$, then $\{x_0\} \cup W$ is $f$-homogeneous, and if $f([W]^2) = \{c_1\}$, then $\{\min Z_0\} \cup W$ is $f$-homogeneous, and both of these sets are $\omega$-$g$-large.  
\end{proof}

By generalizing the proof of Patey and Yokoyama~\cite[Lemma 4.4]{patey2018proof} to $\trRT^2_k$ and replacing the bound of Ketonen and Solovay by \Cref{prop:ketonen-solovay-revisited}, we obtain the following proposition:

\begin{proposition}\label[proposition]{prop:trRT-nk-large}
Let $n \in \NN$, $k \geq 1$ and let $X \subseteq \NN$ be $(\omega^{kn}+1)$-large and $(x \mapsto (kn)^{R_{kn}(2x + 2)})$-sparse, then it is $\trRT^2_k$-$\omega^n$-large. 
\end{proposition}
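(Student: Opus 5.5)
Given a transitive coloring $f : [X]^2 \to k$, the plan is to define an auxiliary coloring with $kn$ colors and apply \Cref{prop:ketonen-solovay-revisited}, following Patey and Yokoyama's treatment of $\ADS$. For each $x \in X$ and each colour $c < k$, let $\ell_c(x)$ be the largest $m \leq n$ such that some $f$-homogeneous set of colour $c$ that is $\omega^m$-large (or, to handle the degenerate levels cleanly, exactly $\omega^{m}\cdot j$-large for the appropriate $j$) ends at~$x$. We may assume every such $\ell_c(x) < n$, since otherwise we are done. For $x < y$ with $f(x,y) = c$, transitivity yields a monotonicity: any $c$-homogeneous set ending at $x$ extends by $y$ to a $c$-homogeneous set ending at $y$. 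Hence the ``profile'' of $y$ in colour $c$ dominates that of $x$. We then set $g(x,y) = (c, \ell_c(x))$, a coloring into $k \cdot n$ colours.

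The key step is to show that a $g$-homogeneous set $H$ of colour $(c, m)$ that is $\omega$-$G$-large for a suitable $G$ yields a $c$-homogeneous set that is $\omega^{m+1}$-large ending at $\max H$. This would contradict $\ell_c(\max H) = m$, or push the level up, and iterating along the $n$ levels gives the contradiction. Concretely, $H = \{h_0 < \dots < h_{s}\}$ is $f$-homogeneous of colour $c$ by definition of $g$. Each $h_i$ with $i \geq 1$ is the endpoint of a $c$-homogeneous $\omega^m$-large set $A_i$, and by transitivity we may take $A_i$ with $\min A_i > h_{i-1}$ roughly. Concatenating $h_0$ with blocks $A_1 < A_2 < \cdots$ then gives, by \Cref{prop:largeness-below-omegaomega}, a $c$-homogeneous $\omega^{m+1}$-large set, provided $s \geq h_0$. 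So the largeness demanded of $H$ is $\omega$-largeness, up to the extra elements needed to make the blocks disjoint.

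The main obstacle is this disjointness: the sets $A_i$ ending at $h_i$ may overlap earlier blocks. To fix it, I would refine the level to record the number of disjoint blocks, using $\ell_c(x) = (m, j)$ meaning ``$\omega^m \cdot j$-large ending at $x$'', and use $G(x) = 2x+2$-type largeness to pass to alternate elements of $H$. This explains the $R_{kn}(2x+2)$ in the sparsity hypothesis. Alternatively, transitivity lets one choose each $A_i$ above $h_{i-1}$ by greedy extension, since any element below can be replaced. Once the combinatorial claim is established, \Cref{prop:ketonen-solovay-revisited} with $kn$ colours and $G(x)=2x+2$ applies directly: $X$ is $(\omega^{kn}+1)$-large and $(x \mapsto (kn)^{R_{kn}(2x+2)})$-sparse, so $X$ is $\RT^2_{kn}$-$\omega$-$G$-large. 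This yields the required $g$-homogeneous set and hence the contradiction. The case $n = 0$ is trivial, since any singleton is homogeneous.
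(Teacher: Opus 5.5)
There is a genuine gap in your gluing step, and it goes beyond the disjointness problem you flag. Your level $\ell_c(x)$ depends on $x$ alone and only records $c$-homogeneous sets that \emph{end} at $x$. It says nothing about how their elements relate to other points. So let $H=\{h_0<\dots<h_s\}$ be $g$-homogeneous of colour $(c,m)$ with witnesses $A_i$ ending at $h_i$. Even if $h_{i-1}<A_i$ holds, the union $\{h_0\}\cup A_1\cup A_2\cup\cdots$ need not be $c$-homogeneous. For $a\in A_i$ and $b\in A_j\setminus\{h_j\}$ with $i<j$, you only know $f(a,h_j)=f(b,h_j)=c$, and transitivity yields nothing about $f(a,b)$, nor about $f(h_0,b)$.

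For instance, take $k=2$ and $f(x,y)=0$ iff $x<_L y$ for a linear order $<_L$. Choose $a<h_i<b<h_j$ with $b<_L a<_L h_i<_L h_j$. The blocks $\{a,h_i\}<\{b,h_j\}$ are disjoint, ordered and $0$-homogeneous, and $f(h_i,h_j)=0$, yet $f(a,b)=1$.

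Neither of your repairs works. Adding a block count to the level makes the number of colours unbounded rather than $kn$, so \Cref{prop:ketonen-solovay-revisited} with $kn$ colours no longer applies. Transitivity also gives no mechanism to ``replace'' the elements of $A_i$ lying below $h_{i-1}$.

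Two smaller points:
\begin{itemize}
\item $g$-homogeneity fixes $\ell_c(h_i)$ only for $i<s$, so an $\omega^{m+1}$-large set ending at $\max H$ contradicts nothing.
\item The hypothesis $(kn)^{R_{kn}(2x+2)}$ matches \Cref{prop:ketonen-solovay-revisited} with $g(x)=x+1$, not $G(x)=2x+2$, since its sparsity condition is $k^{R_k(2g(x))}$. You therefore only get $\omega$-$(x\mapsto x+1)$-large homogeneous sets, with no room to pass to alternate elements.
\end{itemize}

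The missing idea is to let the level depend on the pair, and to confine the witness to the interval between the two points, anchored at the left one. The paper sets $\bar f(x,y)=kj+i$, where $i=f(x,y)$ and $j$ is least such that there is no $\omega^{j+1}$-large $H\subseteq[x,y)\cap X$ with $x\in H$ and $H\cup\{y\}$ $f$-homogeneous of colour $i$ (with $j=n-1$ by default).

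Now let $Y=\{y_0<\dots<y_\ell\}$ be $\bar f$-homogeneous of colour $kj+i$ with $\ell>y_0$. By minimality of $j$ there are witnesses $H_s\subseteq[y_s,y_{s+1})$ that are $\omega^j$-large, contain $y_s$, and have $H_s\cup\{y_{s+1}\}$ $i$-homogeneous. These are automatically disjoint and ordered, and they glue by transitivity. For $a\in H_s$ and $b\in H_t$ with $s<t$:
\begin{itemize}
\item $f(a,y_{s+1})=f(y_{s+1},y_t)=i$ gives $f(a,y_t)=i$;
\item then $f(y_t,b)=i$ gives $f(a,b)=i$.
\end{itemize}
Hence $\{y_0\}\cup H_1\cup\dots\cup H_{\ell-1}$ is an $\omega^{j+1}$-large witness for the pair $(y_0,y_\ell)$. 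This contradicts the minimality of $j$ unless $j=n-1$, in which case it is the required $\omega^n$-large $f$-homogeneous set.
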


\begin{proof}
Consider a transitive coloring $f : [X]^2 \to k$ and let $\bar{f} : [X]^2 \to kn$ be defined by $\bar{f}(x,y) = kj+i$ if $f(x,y) = i$ and if $j$ is the smallest index such that there is no $\omega^{j+1}$-large subset $H \subseteq [x,y) \cap X$ with $x \in H$ and such that $H\cup\{y\}$ is $f$-homogeneous for color~$i$, or, if no such index exists, take $j = n-1$.

By \Cref{prop:ketonen-solovay-revisited}, take $Y \subseteq X$ such that $Y$ is $\omega$-$(x \mapsto x+1)$-large and $\bar{f}$-homogeneous. Write $Y = \{y_0 < y_1 < \dots < y_{\ell}\}$ for some $\ell > y_0$ and let $\bar{f}([Y]^2) = kj+i$. For every $s < \ell$, let $H_s \subseteq [y_s, y_{s+1})$ be $\omega^j$-large, $f$-homogeneous for the color $i$ and such that $y_s \in H_s$. Let $H = \{y_0\} \cup \bigcup_{s = 1}^{\ell -1} H_s$, then, as $\ell > y_0$, $H$ is $\omega^{j+1}$-large and $H \cup \{y_{\ell}\}$ is $f$-homogeneous for the color $i$. Thus, we have $j = n$ as $\bar{f}(y_0, y_{\ell}) = nj + i$, hence $H$ is $f$-homogeneous and $\omega^n$-large.
\end{proof}

\begin{corollary}\label[corollary]{cor:trRT-explicit}
Let $n \in \NN$, $k \geq 1$. If $X \finsub \NN$ is $\omega^{nk+3}$-large and $\min X \geq 8$, then it is $\trRT^2_k$-$\omega^n$-large.
\end{corollary}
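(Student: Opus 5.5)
The plan mirrors the proof of \Cref{cor:em-n+3-large}: first thin $X$ to a sparse subset that is still $\omega^{nk}+1$-large, then apply \Cref{prop:trRT-nk-large} to it.

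\textbf{Step 1 (extracting a sparse subset).} Apply \Cref{lem:sparsity-simplified} with $m = nk$ and $\ell = 1$. Its hypothesis $3\ell + 2 = 5 \leq \min X$ holds because $\min X \geq 8$, and $X$ is $\omega^{nk+3}$-large by assumption. This gives an $\omega^2 \cdot 3$-sparse, $(\omega^{nk}+1)$-large subset $Y \subseteq X$. Since $Y \subseteq X$, we have $\min Y \geq \min X \geq 8$.

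\textbf{Step 2 (upgrading the sparsity).} We need $Y$ to be $(x \mapsto (kn)^{R_{kn}(2x+2)})$-sparse, which is what \Cref{prop:trRT-nk-large} requires. \Cref{lem:omega3-3-sparsity} applies because $\min Y \geq 7$, and it shows that $Y$ is $(x \mapsto x^{R_x(2x+2)})$-sparse. It then suffices to check, for every $x \in Y$, that
\[ (kn)^{R_{kn}(2x+2)} \leq x^{R_x(2x+2)}. \]
When $kn \leq x$ this is immediate, since $R_c(d)$ is monotone in the number of colors $c$.

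\textbf{The main obstacle: small $x$ with $kn > x$.} This case is not covered by the lemmas as stated. I would first try to avoid it by strengthening the sparsity estimate, since $\omega^2\cdot 3$-sparsity is very strong. The computation in the proof of \Cref{lem:omega3-3-sparsity} actually yields a tower-type bound of the form $2^{x2^{x2^x}}$. The idea is to compare this bound with $(kn)^{R_{kn}(2x+2)}$, using a larger margin that exploits $\min X \geq 8$.

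If that comparison fails, I would use a better thinning step. The fix is to choose the first element of the sparse subset large relative to $kn$. Inside the proof of \Cref{lem:sparsity-simplified} we have the freedom to take $\ell$ larger, and the first block is $\omega^{nk+2}$-large. So I would arrange that $\min Z \geq kn$ for the extracted set $Z$, as follows. Discard $\min X$, pick the points of $Z$ as maxima of $\omega^2\cdot 3$-large blocks, and note that the maximum of the first $\omega^2\cdot 3$-large block already exceeds $x2^x \geq 8 \cdot 2^8$. Then bound $kn$ by the size of $\min X$ together with the surplus largeness coming from \Cref{cor:regular-deconstruction-improved}. Once $kn \leq \min Y$ is secured, monotonicity of $R$ finishes Step 2.

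\textbf{Step 3 (conclusion).} Apply \Cref{prop:trRT-nk-large} to $Y$. Every transitive coloring of $[X]^2$ with $k$ colors restricts to a transitive coloring of $[Y]^2$, since transitivity passes to subsets. The proposition then yields an $\omega^n$-large homogeneous subset of $Y$, and hence of $X$. So $X$ is $\trRT^2_k$-$\omega^n$-large.
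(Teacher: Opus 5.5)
You correctly identify the obstacle: elements $x$ of the sparse set with $kn > x$. In particular $x = \min X$ is always kept by \Cref{lem:sparsity-simplified}. Neither of your remedies closes it, so as written the proof has a gap.

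The first remedy cannot work. $\omega^2\cdot 3$-sparsity bounds the next element by a function of $x$ alone, whereas $(kn)^{R_{kn}(2x+2)}$ is unbounded in $kn$ for fixed $x = 8$. No sharpening of the estimate in \Cref{lem:omega3-3-sparsity} can help.

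The second remedy points the right way, but the justification does not bound $kn$. That the first $\omega^2\cdot 3$-large block has maximum above $8\cdot 2^8$ is a fixed constant, and $kn$ is not controlled by $\min X$. Also, with $\ell = 1$, dropping $\min X$ from the lemma's output leaves only $Z$. That set is $\omega^{nk}$-large, not the $(\omega^{nk}+1)$-large set \Cref{prop:trRT-nk-large} needs.

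The missing step is what the hypothesis $\min X \geq 8 = 3\cdot 2+2$ is for, and it is the paper's argument:
\begin{itemize}
\item Apply \Cref{lem:sparsity-simplified} with $\ell = 2$. This gives an $\omega^2\cdot 3$-sparse, $(\omega^{nk}\cdot 2+1)$-large $Y \subseteq X$.
\item Split $Y$ as $Y_0 < Y_1$, with $Y_0$ $\omega^{nk}$-large and $Y_1$ $(\omega^{nk}+1)$-large.
\item Since $\card Y_0 \geq nk$, you get $\min Y_1 \geq nk$.
\end{itemize}
Now every $x \in Y_1$ satisfies $kn \leq x$. $Y_1$ inherits $\omega^2\cdot 3$-sparsity and $\min Y_1 \geq 7$, so \Cref{lem:omega3-3-sparsity} plus your monotonicity argument makes $Y_1$ $(x \mapsto (kn)^{R_{kn}(2x+2)})$-sparse. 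Your Step 3 then applies to $Y_1$.

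Your remark that the first block $A$ in the proof of \Cref{lem:sparsity-simplified} is $\omega^{nk+2}$-large could also be made to work: it gives $\min Z > \max A \geq nk$. But only after taking $\ell = 2$, so that $Z$ alone is still $(\omega^{nk}+1)$-large. That also requires opening up the lemma's proof rather than using it as a black box.
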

\begin{proof}
Since $\min X \geq 8$, then by \Cref{lem:sparsity-simplified}, there is an $\omega^2 \cdot 3$-sparse $\omega^{nk}\cdot 2+1$-large subset~$Y \subseteq X$.
By regularity, there are two subsets $Y_0 < Y_1$ of $Y$ such that $Y_0$ is $\omega^{nk}$-large and $Y_1$ is $\omega^{nk}+1$-large.  

By \Cref{lem:omega3-3-sparsity}, since $Y_1$ is $\omega^2 \cdot 3$-sparse and $\min X \geq 7$, then it is $(x \mapsto x^{x^{x(2x+2)}})$-sparse.
Since $Y_0$ is $\omega^{nk}$-large, then $\card Y_0 \geq nk$, so $\min Y_1 \geq nk$. It follows that $Y_1$ is $(x \mapsto (kn)^{(kn)^{kn(2x+2)}})$-sparse, so by \Cref{prop:trRT-nk-large}, $Y_1$ is $\trRT^2_k$-$\omega^n$-large.
\end{proof}

\begin{corollary}\label[corollary]{cor:ads-2n+3-large}
Let $n \in \NN$. If $X \finsub \NN$ is $\omega^{2n+3}$-large and $\min X \geq 9$, then it is $\ADS$-$\omega^n$-large.
\end{corollary}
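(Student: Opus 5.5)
We derive \Cref{cor:ads-2n+3-large} from \Cref{cor:trRT-explicit} with $k = 2$, via the standard coding of linear orders as transitive $2$-colorings. Let $X$ be $\omega^{2n+3}$-large with $\min X \geq 9$, and let $\prec$ be a linear order on $X$ (the instance of $\ADS$ restricted to $X$). Define $f : [X]^2 \to 2$ by $f(x,y) = 0$ if $x \prec y$ and $f(x,y) = 1$ if $y \prec x$, for $x < y$.

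The first step is to check that $f$ is transitive in the sense of \Cref{def:transitivity}. Take $x < y < z$ with $f(x,y) = f(y,z) = 0$. Then $x \prec y \prec z$, so $x \prec z$ and $f(x,z) = 0$. The case of color $1$ is symmetric.

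Since $nk + 3 = 2n+3$ for $k = 2$, and $\min X \geq 9 \geq 8$, \Cref{cor:trRT-explicit} applies. It yields an $\omega^n$-large $f$-homogeneous subset $H \subseteq X$. If $H$ is homogeneous for color $0$, then $\prec$ agrees with $<$ on $H$, so $H$ is an ascending sequence. If $H$ is homogeneous for color $1$, then $H$ is a descending sequence. Hence $X$ is $\ADS$-$\omega^n$-large.

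There is no real obstacle. The only point needing care is that the paper's notion of $\ADS$-$\alpha$-largeness is presumably phrased as ``every linear order on $X$, or equivalently every transitive $2$-coloring, admits an $\alpha$-large ascending or descending subset''. If it is phrased directly in terms of transitive $2$-colorings, the statement is literally the case $k = 2$ of \Cref{cor:trRT-explicit}, and the stronger hypothesis $\min X \geq 9$ is harmless.
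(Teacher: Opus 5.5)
Your proposal is correct and matches the paper, which proves this corollary in one line as an immediate consequence of \Cref{cor:trRT-explicit} with $k = 2$. You additionally spell out the coding of a linear order as a transitive $2$-coloring, and how homogeneous sets become ascending or descending sequences, which the paper leaves implicit.
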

\begin{proof}
Immediate by \Cref{cor:trRT-explicit}.
\end{proof}

Note that this bound is optimal, up to an additive constant, in that the coloring witnessing the lower bound of $\RT^2_2$ by Kotlarski et al.~\cite[Theorem 5.4]{kotlarski2007more} is transitive.

\subsection{Ramsey's theorem for pairs}

We now have all the necessary ingredients to prove our optimal bounds for $\RT^2_k$-$\omega^n$-largeness,
using the bounds for the fallow Erd\H{o}s-Moser theorem and for transitive Ramsey's theorem for pairs.
We first state it with largeness and sparsity assumptions, and then use \Cref{sec:sparsity} to compute purely largeness bounds.

\begin{theorem}\label[theorem]{thm:rt22-largeness-revisited}
Let $n, k \geq 1$. If $X \finsub \NN$ is $\omega^{kn} \cdot 4$-large and $(x \mapsto x^{R_{x}(2x + 2)})$-sparse with $kn \leq \min X$, then it is $\RT^2_k$-$\omega^n$-large.
\end{theorem}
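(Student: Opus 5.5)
The plan is to compose the two intermediate bounds: first extract a large $f$-fallow subset using \Cref{prop:em-k-large}(3), and then apply \Cref{prop:trRT-nk-large} to the restriction of $f$ to it. Since a fallow set is transitive, this restriction is a transitive coloring. Fix a coloring $f : [X]^2 \to k$.

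\textbf{Step 1 (fallow subset).} $X$ is $\omega^{kn}\cdot 4$-large and $(x \mapsto x^{R_x(2x+2)})$-sparse. This sparsity dominates $x^{R_x(2x)}$ because $R_x$ is monotone in its argument. Moreover $k \leq kn \leq \min X$, so $f$ can be regarded as a coloring into $\min X$ colors. Hence \Cref{prop:em-k-large}(3) with exponent $kn$ yields an $f$-fallow subset $Y \subseteq X$ which is $(\omega^{kn}+1)$-large. Subsets of sparse sets remain sparse, so $Y$ is still $(x \mapsto x^{R_x(2x+2)})$-sparse.

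\textbf{Step 2 (transitive homogeneity).} The restriction $f \restriction [Y]^2$ is fallow, hence transitive in the sense of \Cref{def:transitivity}: if $f(x,y)=f(y,z)$, then $f(x,z) \in \{f(x,y),f(y,z)\}$ forces $f(x,z)=f(x,y)$. \Cref{prop:trRT-nk-large} is stated for colorings of all of $[Y]^2$ on the set $Y$, so we apply it to $Y$ itself. Its hypotheses are that $Y$ is $(\omega^{kn}+1)$-large and $(x \mapsto (kn)^{R_{kn}(2x+2)})$-sparse. Every $x \in Y$ satisfies $x \geq \min X \geq kn$, and $R$ is monotone in the number of colors, so
$$(kn)^{R_{kn}(2x+2)} \leq x^{R_x(2x+2)}.$$
Thus the required sparsity follows from the sparsity of $X$. \Cref{prop:trRT-nk-large} then gives an $\omega^n$-large $f$-homogeneous subset of $Y \subseteq X$.

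\textbf{Main obstacle.} The delicate point is checking that the hypotheses line up exactly. Step 1 needs the number of colors $k$ to be at most $\min X$, which the assumption $kn \leq \min X$ guarantees. It also needs the exponent of largeness in \Cref{prop:em-k-large}(3), whose output is $\omega^{kn}+1$, to match the input required by \Cref{prop:trRT-nk-large}. Finally, the sparsity comparison in Step 2 relies on $\min Y \geq kn$; this is precisely why the hypothesis $kn \leq \min X$ is imposed.

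If $n$ or $k$ made $kn=0$ the proposition would degenerate, but $n,k\geq 1$ rules this out.
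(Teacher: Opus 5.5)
Your proposal is correct and is essentially the paper's own proof. You use \Cref{prop:em-k-large}(3) to extract an $(\omega^{kn}+1)$-large $f$-fallow, hence transitive, subset $Y$, and then apply \Cref{prop:trRT-nk-large} to $Y$, using $kn \le \min X$ to obtain the required $(x \mapsto (kn)^{R_{kn}(2x+2)})$-sparsity. Your explicit checks (monotonicity of $R$ in both arguments, $k \le \min X$ so that $f$ is an $\fEM$-instance, and inheritance of sparsity by subsets) fill in details the paper leaves implicit.
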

\begin{proof}
Let $f : [X]^2 \to k$ be a coloring. $X$ is $\omega^{kn} \cdot 4$-large and $(x \mapsto x^{R_{x}(2x)})$-sparse, hence, by \Cref{prop:em-k-large}(3), there exists some $\omega^{kn} + 1$-large and $f$-fallow subset $Y \subseteq X \setminus \{\min X\}$. In particular, $Y$ is $f$-transitive.

As $kn \leq \min X$, $Y$ is $(x \mapsto (kn)^{R_{kn}(2x + 2)})$-sparse, and by \Cref{prop:trRT-nk-large} there exists some $\omega^n$-large $f$-homogeneous subset $Z \subseteq Y$.
\end{proof}

Note that if we applied $\fEM$ and $\trRT^2_k$ under their explicit largeness assumptions (\Cref{cor:em-n+3-large,cor:trRT-explicit}) instead of their hybrid sparsity/largeness versions (\Cref{prop:em-k-large,prop:trRT-nk-large}), this would have yielded significantly worse bounds. It is therefore useful to keep a sparsity version of the statements in case of their use as an intermediate object in the proof of a stronger combinatorial statement.
When translating the sparsity assumptions into largeness ones, we obtain our main theorem.

\begin{repmaintheorem}{thm:rt22-tight-bound}
Let $n, k \geq 1$. If $X \finsub \NN$ is $\omega^{kn+3}$-large and $\min X \geq 17$, then it is $\RT^2_k$-$\omega^n$-large. 
\end{repmaintheorem}
\begin{proof}
Since $\min X \geq 17$, then by \Cref{lem:sparsity-simplified}, there is an $\omega^2 \cdot 3$-sparse $\omega^{nk}\cdot 5+1$-large subset~$Y \subseteq X$.
In particular, there are two subsets $X_0 < X_1$ of $X$ such that $X_0$ is $\omega^{nk}$-large and $X_1$ is $\omega^{nk} \cdot 4$-large.
Since $X_0$ is $\omega^{nk}$-large, then $\card X_0 \geq nk$, so $\min X_1 \geq nk$.
Since $X_1$ is $\omega^2 \cdot 3$-large, then by \Cref{lem:omega3-3-sparsity}, it is 
$(x \mapsto x^{R_{x}(2x + 2)})$-sparse.
It follows by \Cref{thm:rt22-largeness-revisited} that $X_1$ is $\RT^2_k$-$\omega^n$-large.
\end{proof}

\bibliographystyle{plain}
\bibliography{biblio}

@article {ketonen1981rapidly,
    AUTHOR = {Ketonen, Jussi and Solovay, Robert},
     TITLE = {Rapidly growing {R}amsey functions},
   JOURNAL = {Ann. of Math. (2)},
  FJOURNAL = {Annals of Mathematics. Second Series},
    VOLUME = {113},
      YEAR = {1981},
    NUMBER = {2},
     PAGES = {267--314},
      ISSN = {0003-486X},
   MRCLASS = {03F30 (03C62 03D20 05A17)},
  MRNUMBER = {607894},
MRREVIEWER = {Leonard Lipshitz},
       DOI = {10.2307/2006985},
       URL = {https://doi.org/10.2307/2006985},
}

@article {patey2018proof,
    AUTHOR = {Patey, Ludovic and Yokoyama, Keita},
     TITLE = {The proof-theoretic strength of {R}amsey's theorem for pairs
              and two colors},
   JOURNAL = {Adv. Math.},
  FJOURNAL = {Advances in Mathematics},
    VOLUME = {330},
      YEAR = {2018},
     PAGES = {1034--1070},
      ISSN = {0001-8708},
   MRCLASS = {03B30 (03C62 03D80 03F35 03H15 05D10)},
  MRNUMBER = {3787563},
MRREVIEWER = {Fran\c{c}ois G. Dorais},
       DOI = {10.1016/j.aim.2018.03.035},
       URL = {https://doi.org/10.1016/j.aim.2018.03.035},
}

@article {kolo2020some,
    AUTHOR = {Ko{\l}odziejczyk, Leszek Aleksander and Yokoyama, Keita},
     TITLE = {Some upper bounds on ordinal-valued {R}amsey numbers for
              colourings of pairs},
   JOURNAL = {Selecta Math. (N.S.)},
  FJOURNAL = {Selecta Mathematica. New Series},
    VOLUME = {26},
      YEAR = {2020},
    NUMBER = {4},
     PAGES = {Paper No. 56, 18},
      ISSN = {1022-1824},
   MRCLASS = {03F30 (03B30 03F35 05D10)},
  MRNUMBER = {4125988},
MRREVIEWER = {Lars Halln\"{a}s},
       DOI = {10.1007/s00029-020-00577-3},
       URL = {https://doi.org/10.1007/s00029-020-00577-3},
}

@article{bovykin2017strength,
  title={The strength of infinitary Ramseyan principles can be accessed by their densities},
  author={Bovykin, Andrey and Weiermann, Andreas},
  journal={Annals of Pure and Applied Logic},
  volume={168},
  number={9},
  pages={1700--1709},
  year={2017},
  publisher={Elsevier}
}

@article{kolodziejczyk2023ramsey,
  title={Ramsey’s theorem for pairs, collection, and proof size},
  author={Ko{\l}odziejczyk, Leszek Aleksander and Wong, Tin Lok and Yokoyama, Keita},
  journal={Journal of Mathematical Logic},
  pages={2350007},
  year={2023},
  publisher={World Scientific}
}

@article {bigorajska1999partition,
    AUTHOR = {Bigorajska, Teresa and Kotlarski, Henryk},
     TITLE = {A partition theorem for {$\alpha$}-large sets},
   JOURNAL = {Fund. Math.},
  FJOURNAL = {Fundamenta Mathematicae},
    VOLUME = {160},
      YEAR = {1999},
    NUMBER = {1},
     PAGES = {27--37},
      ISSN = {0016-2736,1730-6329},
   MRCLASS = {05A18 (03F15)},
  MRNUMBER = {1694401},
       DOI = {10.4064/fm-160-1-27-37},
       URL = {https://doi.org/10.4064/fm-160-1-27-37},
}

@article{houerou2023conservation,
  title={{$\Pi^0_4$} conservation of the {O}rdered {V}ariable {W}ord theorem},
  author={Le Hou{\'e}rou, Quentin and Patey, Ludovic Levy},
  journal={The Journal of Symbolic Logic},
  pages={1--16},
  year={2023},
  publisher={Cambridge University Press}
}

@article {erdos1935combinatorial,
    AUTHOR = {Erd\"{o}s, P. and Szekeres, G.},
     TITLE = {A combinatorial problem in geometry},
   JOURNAL = {Compositio Math.},
  FJOURNAL = {Compositio Mathematica},
    VOLUME = {2},
      YEAR = {1935},
     PAGES = {463--470},
      ISSN = {0010-437X,1570-5846},
   MRCLASS = {DML},
  MRNUMBER = {1556929},
       URL = {http://www.numdam.org/item?id=CM_1935__2__463_0},
}

@book {graham2013ramsey,
    AUTHOR = {Graham, Ronald L. and Rothschild, Bruce L. and Spencer, Joel
              H.},
     TITLE = {Ramsey theory},
    SERIES = {Wiley Series in Discrete Mathematics and Optimization},
      NOTE = {Paperback edition of the second (1990) edition [MR1044995]},
 PUBLISHER = {John Wiley \& Sons, Inc., Hoboken, NJ},
      YEAR = {2013},
     PAGES = {xiv+196},
      ISBN = {978-1-118-79966-6},
   MRCLASS = {05-02 (05A99 05C55 54H20)},
  MRNUMBER = {3288500},
}

@article {bigorajska2006some,
    AUTHOR = {Bigorajska, Teresa and Kotlarski, Henryk},
     TITLE = {Partitioning {$\alpha$}-large sets: some lower bounds},
   JOURNAL = {Trans. Amer. Math. Soc.},
  FJOURNAL = {Transactions of the American Mathematical Society},
    VOLUME = {358},
      YEAR = {2006},
    NUMBER = {11},
     PAGES = {4981--5001},
      ISSN = {0002-9947,1088-6850},
   MRCLASS = {05A18 (03F15 03F30 05D10)},
  MRNUMBER = {2231881},
MRREVIEWER = {Martin\ Klazar},
       DOI = {10.1090/S0002-9947-06-03883-9},
       URL = {https://doi.org/10.1090/S0002-9947-06-03883-9},
}

@book {kotlarski2019model,
    AUTHOR = {Kotlarski, Henryk},
     TITLE = {A model-theoretic approach to proof theory},
    SERIES = {Trends in Logic---Studia Logica Library},
    VOLUME = {51},
      NOTE = {Edited by Zofia Adamowicz, Teresa Bigorajska and Konrad
              Zdanowski},
 PUBLISHER = {Springer, Cham},
      YEAR = {2019},
     PAGES = {xviii+109},
      ISBN = {978-3-030-28920-1; 978-3-030-28921-8},
   MRCLASS = {03-02 (03Cxx 03F30 03F40)},
  MRNUMBER = {3969945},
MRREVIEWER = {Roman\ Kossak},
}

@article {ratajczyk1988combinatorial,
    AUTHOR = {Ratajczyk, Z.},
     TITLE = {A combinatorial analysis of functions provably recursive in
              {${\rm I}\Sigma_n$}},
   JOURNAL = {Fund. Math.},
  FJOURNAL = {Polska Akademia Nauk. Fundamenta Mathematicae},
    VOLUME = {130},
      YEAR = {1988},
    NUMBER = {3},
     PAGES = {191--213},
      ISSN = {0016-2736,1730-6329},
   MRCLASS = {03F30 (03D20 03F10 03F15)},
  MRNUMBER = {970904},
MRREVIEWER = {A.\ M.\ Dawes},
       DOI = {10.4064/fm-130-3-191-213},
       URL = {https://doi.org/10.4064/fm-130-3-191-213},
}

@article{hardy1904theorem,
  title={A theorem concerning the infinite cardinal numbers},
  author={Hardy, Godfrey H},
  journal={Quarterly journal of Mathematics},
  volume={35},
  pages={87--94},
  year={1904}
}

@article {wainer1970classification,
    AUTHOR = {Wainer, S. S.},
     TITLE = {A classification of the ordinal recursive functions},
   JOURNAL = {Arch. Math. Logik Grundlag.},
  FJOURNAL = {Archiv f\"{u}r Mathematische Logik und Grundlagenforschung},
    VOLUME = {13},
      YEAR = {1970},
     PAGES = {136--153},
      ISSN = {0003-9268},
   MRCLASS = {02F35},
  MRNUMBER = {294134},
MRREVIEWER = {A.\ Kino},
       DOI = {10.1007/BF01973619},
       URL = {https://doi.org/10.1007/BF01973619},
}

@article {kotlarski2007more,
    AUTHOR = {Kotlarski, Henryk and Piekart, Bo\.{z}ena and Weiermann,
              Andreas},
     TITLE = {More on lower bounds for partitioning {$\alpha$}-large sets},
   JOURNAL = {Ann. Pure Appl. Logic},
  FJOURNAL = {Annals of Pure and Applied Logic},
    VOLUME = {147},
      YEAR = {2007},
    NUMBER = {3},
     PAGES = {113--126},
      ISSN = {0168-0072,1873-2461},
   MRCLASS = {03F30 (03E02 05A18 05D10)},
  MRNUMBER = {2335083},
MRREVIEWER = {G.\ E.\ Mints},
       DOI = {10.1016/j.apal.2006.04.004},
       URL = {https://doi.org/10.1016/j.apal.2006.04.004},
}

@article {bigorajska2002combinatorics,
    AUTHOR = {Bigorajska, Teresa and Kotlarski, Henryk},
     TITLE = {Some combinatorics involving {$\xi$}-large sets},
   JOURNAL = {Fund. Math.},
  FJOURNAL = {Fundamenta Mathematicae},
    VOLUME = {175},
      YEAR = {2002},
    NUMBER = {2},
     PAGES = {119--125},
      ISSN = {0016-2736,1730-6329},
   MRCLASS = {05D10 (05A18)},
  MRNUMBER = {1969630},
       DOI = {10.4064/fm175-2-2},
       URL = {https://doi.org/10.4064/fm175-2-2},
}

@article{smet2010partitioning,
  title={Partitioning $\alpha $-large sets for $\alpha<\varepsilon_\omega$},
  author={De Smet, Michiel and Weiermann, Andreas},
  journal={arXiv preprint arXiv:1001.2437},
  year={2010}
}

@unpublished{pelupessy2016largeness,
Author = {Florian Pelupessy},
Title = {On $\alpha$-largeness and the Paris-Harrington principle in $\mathrm{RCA}_0$ and $\mathrm{RCA}_0^{\displaystyle{*}}$},
Year = {2016},
Eprint = {arXiv:1611.08988},
}

@article {towsner2024erdos,
    AUTHOR = {Towsner, Henry and Yokoyama, Keita},
     TITLE = {Erd{\H{o}}s-Moser and {$I\Sigma_2$}},
   JOURNAL = {Israel J. Math.},
  FJOURNAL = {Israel Journal of Mathematics},
    VOLUME = {263},
      YEAR = {2024},
    NUMBER = {2},
     PAGES = {843--870},
      ISSN = {0021-2172,1565-8511},
   MRCLASS = {03F35 (03B30)},
  MRNUMBER = {4819968},
       DOI = {10.1007/s11856-024-2643-8},
       URL = {https://doi.org/10.1007/s11856-024-2643-8},
}

@article{houerou2026conservationramsey,
author = {Le Houérou, Quentin and Levy Patey, Ludovic and Yokoyama, Keita},
title = {$\Pi^0_4$ conservation of Ramsey's theorem for pairs},
journal = {Journal of the London Mathematical Society},
volume = {113},
number = {1},
pages = {e70419},
doi = {https://doi.org/10.1112/jlms.70419},
year = {2026}
}

@book {dzhafarov2022reverse,
    AUTHOR = {Dzhafarov, Damir D. and Mummert, Carl},
     TITLE = {Reverse mathematics---problems, reductions, and proofs},
    SERIES = {Theory and Applications of Computability},
 PUBLISHER = {Springer, Cham},
      YEAR = {[2022] \copyright 2022},
     PAGES = {xix+488},
      ISBN = {978-3-031-11366-6; 978-3-031-11367-3},
   MRCLASS = {03-02 (03B30 03F35)},
  MRNUMBER = {4472209},
MRREVIEWER = {Huishan Wu},
       DOI = {10.1007/978-3-031-11367-3},
       URL = {https://doi.org/10.1007/978-3-031-11367-3},
}

@book {simpson2009subsystems,
    AUTHOR = {Simpson, Stephen G.},
     TITLE = {Subsystems of second order arithmetic},
    SERIES = {Perspectives in Logic},
   EDITION = {Second},
 PUBLISHER = {Cambridge University Press, Cambridge; Association for
              Symbolic Logic, Poughkeepsie, NY},
      YEAR = {2009},
     PAGES = {xvi+444},
      ISBN = {978-0-521-88439-6},
   MRCLASS = {03F35 (03-02 03B30)},
  MRNUMBER = {2517689},
       DOI = {10.1017/CBO9780511581007},
       URL = {https://doi.org/10.1017/CBO9780511581007},
}

@article {erdos1964representation,
    AUTHOR = {Erd\H{o}s, P. and Moser, L.},
     TITLE = {On the representation of directed graphs as unions of
              orderings},
   JOURNAL = {Magyar Tud. Akad. Mat. Kutat\'{o} Int. K\"{o}zl.},
  FJOURNAL = {A Magyar Tudom\'{a}nyos Akad\'{e}mia. Matematikai Kutat\'{o}
              Int\'{e}zet\'{e}nek K\"{o}zlem\'{e}nyei},
    VOLUME = {9},
      YEAR = {1964},
     PAGES = {125--132},
      ISSN = {0541-9514},
   MRCLASS = {05.60},
  MRNUMBER = {168494},
MRREVIEWER = {J.\ W.\ Moon},
}

@article {lerman2013separating,
    AUTHOR = {Lerman, Manuel and Solomon, Reed and Towsner, Henry},
     TITLE = {Separating principles below {R}amsey's theorem for pairs},
   JOURNAL = {J. Math. Log.},
  FJOURNAL = {Journal of Mathematical Logic},
    VOLUME = {13},
      YEAR = {2013},
    NUMBER = {2},
     PAGES = {1350007, 44},
      ISSN = {0219-0613,1793-6691},
   MRCLASS = {03F35 (03B30)},
  MRNUMBER = {3125903},
MRREVIEWER = {Alberto\ Marcone},
       DOI = {10.1142/S0219061313500074},
       URL = {https://doi.org/10.1142/S0219061313500074},
}

@article {hirschfeldt2007combinatorial,
    AUTHOR = {Hirschfeldt, Denis R. and Shore, Richard A.},
     TITLE = {Combinatorial principles weaker than {R}amsey's theorem for
              pairs},
   JOURNAL = {J. Symbolic Logic},
  FJOURNAL = {The Journal of Symbolic Logic},
    VOLUME = {72},
      YEAR = {2007},
    NUMBER = {1},
     PAGES = {171--206},
      ISSN = {0022-4812,1943-5886},
   MRCLASS = {03F35 (03B30)},
  MRNUMBER = {2298478},
MRREVIEWER = {Roman\ Kossak},
       DOI = {10.2178/jsl/1174668391},
       URL = {https://doi.org/10.2178/jsl/1174668391},
}

@incollection {lerman1982recursive,
    AUTHOR = {Lerman, Manuel and Rosenstein, Joseph G.},
     TITLE = {Recursive linear orderings},
 BOOKTITLE = {Patras {L}ogic {S}ymposion ({P}atras, 1980)},
    SERIES = {Stud. Logic Found. Math.},
    VOLUME = {109},
     PAGES = {123--136},
 PUBLISHER = {North-Holland, Amsterdam-New York},
      YEAR = {1982},
      ISBN = {0-444-86476-8},
   MRCLASS = {03D45},
  MRNUMBER = {694256},
MRREVIEWER = {J.\ C.\ Owings, Jr.},
}

@phdthesis{patey2016reverse,
  title={The reverse mathematics of Ramsey-type theorems},
  author={Patey, Ludovic},
  year={2016},
  school={Universit{\'e} Paris Diderot (Paris 7) Sorbonne Paris Cit{\'e}}
}

\end{document}